\crefname{hypothesis}{Hypothesis}{Hypotheses}
\author{Zhou Fang\thanks{Zhou Fang was with School of Mathematical Sciences, Zhejiang University, Hangzhou 310027, China, during this work.
  (\email{zhou\_fang@zju.edu.cn}) Currently, he is with the Department of Biosystems Science and Engineering, ETH Zurich.}
\and Arjan van der Schaft \thanks{Arjan van der Schaft is with Johann Bernoulli Institute, Faculty of Science and Engineering, University of Groningen, Netherlands. (\email{a.j.van.der.schaft@rug.nl}.)}
\and Chuanhou Gao \thanks{Chuanhou Gao is with School of Mathematical Sciences, Zhejiang University, Hangzhou 310027, China.
  (\email{gaochou@zju.edu.cn})}}
\newcommand*{\addFileDependency}[1]{
  \typeout{(#1)}
  \@addtofilelist{#1}
  \IfFileExists{#1}{}{\typeout{No file #1.}}
}
\newtheorem{condition}{Condition}
\newtheorem{example}{Example}
\def\dd{\text{d}}
\def\Exp{\textrm{Exp}}
\def\Ln{\textrm{Ln}}
\title{A graphic formulation of  non-isothermal chemical reaction systems and the analysis of detailed balanced networks
\thanks{Submitted to the editors DATE.
\funding{This work is in part supported by the National Natural Science Foundation of China under Grant No. 11671418 and 12071428, and in part by the Zhejiang Provincial Natural Science Foundation of China under Grant LZ20A010002.}}
}
\begin{document}

\maketitle

\begin{abstract}
	In this paper, we provide a graphic formulation of non-isothermal reaction systems and show that a non-isothermal detailed balanced network system converges (locally) asymptotically to the unique equilibrium within the invariant manifold determined by the initial condition.
	To model thermal effects, the proposed modeling approach extends the classical chemical reaction network by adding two parameters to each direct (reaction) edge, depicting, respectively, the instantaneous internal energy change after the firing of the reaction and the variation of the reaction rate with respect to the temperature.
	For systems possessing thermodynamic equilibria, our modeling approach provides a compact formulation of the dynamics where reaction topology and thermodynamic information are presented simultaneously.
	Finally, using this formulation and the Legendre transformation, we show that non-isothermal detailed balanced network systems admit some fundamental properties: dissipativeness, the detailed balancing of each equilibrium, the existence and uniqueness of the equilibrium, and the asymptotic stability of each equilibrium. 
	In general, the analysis and results of this work provide insights into the research of non-isothermal chemical reaction systems.
\end{abstract}

\begin{keywords}
 Non-isothermal chemical reaction networks,
 detailed balanced networks, asymptotic stability,
 thermodynamics, and Legendre transformation.
\end{keywords}

\begin{AMS}
  34D20, 80A30, 80A50, 93C15.
\end{AMS}

\section{Introduction}
Chemical reaction network (CRN) is a simple graphic structure that has been applied extensively to modeling and analyzing complex dynamic systems, ranging from biochemistry to epidemics. The corresponding theory, called CRN theory (CRNT), has been developed in over the last decades, since the seminal papers published in 1970's by Horn and Jackson \cite{horn1972general} and Feinberg \cite{feinberg1972complex}. One of the central issues in CRNT is to explore the connection between network topology and dynamical properties, including stability \cite{horn1972general,feinberg1972complex,rao2013graph,van2013mathematical,szederkenyi2011finding,ke2017realizations,al2016new,ke2019complex}, unique/multi-stationarity \cite{feinberg1995existence,feinberg1987chemical,feinberg1988chemical,craciun2006multiple,joshi2012simplifying}, robustness \cite{shinar2010structural,shinar2009sensitivity}, and persistence\footnote{persistence means there is no $\omega$-limit point on the boundary of the positive quadrant.} \cite{gopalkrishnan2014geometric,craciun2015toric,anderson2008global,anderson2011proof}.

Although CRNT has been very popular within biochemistry, it fails to handle chemical reactions occurring in the field of chemical engineering. The main reason is that CRNT assumes the temperature in the reaction system to be constant. However, many reactions absorb or release heat when they fire, leading to a change of temperature. A typical example is the ammonia synthesis reaction
$$\text{N}_2+3\text{H}_2\rightleftharpoons 2\text{NH}_3,$$
where $\text{N}_2$ is nitrogen, $\text{H}_2$ is hydrogen, and $\text{NH}_3$ is ammonia. The forward reaction will release the heat of $92.4$KJ/mol\footnote{KJ means kilojoule, energy unit; mol means mole, mass unit.} while the backward reaction will absorb the same amount of heat. For this class of reactions, called here non-isothermal CRNs (the classic CRNs are called isothermal CRNs for the sake of distinction), the dynamics exhibits more complex behaviors. On the one hand, the change of temperature should be captured, often represented by the change of internal energy (see \mbox{\cref{section notations}} for physical explanation of this terminology and others); on the other hand, the reaction rate coefficient is no longer a constant but a function of temperature following the Arrhenius law (or the transition state theory in a broader sense)\mbox{\cite{atkins2013elements,truhlar1983current,laidler1983development,eyring1935activated}}. Thus, new theory is needed to address the issues of modeling and analyzing dynamical behaviors of non-isothermal CRN systems.

A pioneering work in this regard is in Wang et al. \cite{wang2018port,wang2016irreversible}, investigating non-isothermal CRNs through the port-Hamiltonian modeling approach. This approach, rooted in the classical Hamiltonian equations, models the physical system by utilizing external ports and a Dirac structure. Mathematically, it amounts to define skew-symmetric tensor fields on the state space, an energy function called the Hamiltonian which is often given by the total stored energy of the system, and energy-dissipating relations. With this method, this work uses additional parameters to depict the Arrhenius law, write dynamics of isolated systems in the form of port-Hamiltonian systems, and express interconnection between target systems and environments via external ports. As a result it is shown that detailed balanced networks are dissipative with respect to availability functions of the entropy and convergent to a set of equilibria \cite[Theorem 5.1 and 5.2]{wang2018port}. However, apart from these results, many topics within the theory non-isothermal CRNs are not yet addressed. Up to now, there are still no systematic approaches to model generic non-isothermal CRN systems, especially when chemical reactions, mass fluxes, and heat exchanges are simultaneously present. 

For the above reasons, the current work intends to develop basic theory for non-isothermal CRNs, including the definition of basic concepts and the derivation of basic results that are parallel to those in isothermal CRNT. To this end, we provide a systematic method to model generic non-isothermal CRN system in order to derive results going beyond those in\mbox{\cite{wang2018port}}. Note that since non-isothermal CRN systems have to obey the first and second law of thermodynamics\mbox{\cite{ydstie2002passivity}}, thermodynamic knowledge will be key for their analysis. For the convenience of the readers, we present a review on thermodynamics in \mbox{\cref{section thermodynamics}} as preliminary knowledge. 

Actually, it is not a new idea to utilize thermodynamics to analyze CRN systems. Within the isothermal detailed balanced CRN framework, van der Schaft et al. \cite{van2013mathematical,van2013network} established a compact formulation which exhibits at the same time the graphic structure and the aforementioned thermodynamic knowledge, by which dynamical properties can be easily characterized. 
A non-isothermal analog to this compact formulation is presented in \cite{wang2018port} for isolated systems (i.e., systems with no mass flux or heat exchanges)  and greatly simplifies the stability analysis in that scenario.
Due to the introduction of pseudo-Helmholtz free energies, which served as Lyapunov functions for complex balanced networks \cite{horn1972general}, many studies focusing on the thermodynamic interpretations and properties of these functions have been published. 
Following \cite{van2013mathematical,van2013network}, Rao et al. \cite{rao2013graph} applied the pseudo-Helmholtz free energy to provide a similar compact mathematical formulation of complex-balanced networks, exhibiting at the same time the graphic structure and the thermodynamic relations. 
From the viewpoint of statistical mechanics, Anderson and his co-workers showed that the stationary distribution of a stochastic complex-balanced (isothermal) network has a product-form \cite{anderson2010product}, similar to Poisson distributions, and its non-equilibrium potential leads to a pseudo-Helmholtz free energy in the classical limit case \cite{anderson2015lyapunov}.
Inspired by these thermodynamic analyses, Fang and Gao \cite{fang2019lyapunov} introduced Lyapunov function PDEs based on the stationary distribution and showed their solution to be dissipative and be able to serve as Lyapunov functions for some special cases, including, but not limited to, complex-balanced networks.
The extension of classical thermodynamic concepts to non-equilibrium cases (especially to living systems) has been reviewed in \cite{beard2004thermodynamic}, providing the physico-chemical basis for analyzing large-scale metabolic networks in living systems (c.f. \cite{beard2005thermodynamic,beard2004thermodynamic}).
All these studies show good examples that thermodynamic knowledge can contribute to the analysis of CRN systems. In line with these studies, we here utilize isothermal CRNT and thermodynamics to construct the basic framework of non-isothermal CRNT, and further analyze the dynamical behaviors of non-isothermal detailed balanced CRN systems.
For the convenience of the reader, we also present a brief review on isothermal CRNT in \mbox{\cref{iso_CRNT}}.

The rest of this paper is organized as follows. In \cref{section modeling}, we establish a graphic formulation for the modeling of non-isothermal CRN systems. Then we provide a compact dynamic formulation of non-isothermal detailed balanced networks in \cref{section dissipativeness}, and further exhibit their dissipativeness. In \cref{section asymptotic stability}, we show the asymptotic stability of detailed balanced network systems by studying the Legendre transform of the availability function with the help of thermodynamics.
Finally, \cref{section discussion} concludes this paper. 
In the appendix, we list some mathematical notations and physical terminologies (see \cref{section notations}), review basic knowledge about thermodynamics and isothermal CRNT (see \cref{section appendix B}), and provide the detailed proofs of our main results (see \cref{section proof}). 



\section{Non-isothermal CRNs}{\label{section modeling}}
In this section, we modify the classical CRN structure so that it can cover non-isothermal reaction systems.
We refer readers to \cref{section appendix B} for a brief review of thermodynamics and the isothermal CRNT. 

\subsection{Dynamic model}
We consider the same network system as stated in \cref{iso_CRNT} but in the non-isothermal scenario. That is
\begin{equation*}
\alpha_{1,j} X_{1}+ 
\cdots + \alpha_{n,j} X_{n}  \ce{->[$k_j(T)$]}
\tilde\alpha_{1,j} X_{1}+ 
\cdots+ \tilde\alpha_{n,j} X_{n}, \quad j=1,...,r,
\end{equation*}
where $X_i$ represents the $i$-th species in the system, $\alpha_{i,j}$ and $\tilde\alpha_{i,j}$ are non-negative integers called stoichiometric coefficients, the integer vectors, $(\alpha_{1,j},\dots,\alpha_{n,j})^{\top}$ and $(\tilde \alpha_{1,j},\dots,\tilde \alpha_{n,j})^{\top}$, are termed as complexes, and $k_j(T)$ are reaction rate constants at the temperature $T$.
By the discussion in \mbox{\cref{section thermodynamics}}, the system can be fully described by internal energy and amounts of substances, i.e., $(U, N_1,\dots, N_n)$ or $(U,N)$, so in non-isothermal cases, we need to track the internal energy along with amounts of substances. For the $j$-th reaction, we denote the instantaneous energy change by $\Delta\mathcal{U}_j(U,N)$ (with the unit $J/\text{mol}$) and assume the reaction rate coefficient, $k_j(T)$, to follow Eyring's formula, i.e., \cite{eyring1935activated} 
\begin{equation}{\label{eq. erying's law}}
k_j(T)=\tilde{k}_{j}T \exp\left\{-\frac{\Delta G^{\ddagger}_j(T)}{RT}\right\}
= \tilde{k}_{j}T \exp\left\{-\frac{g^{\rm AS}_j(T)-y_{\sigma_{j}}^{\top}g(T)}{A_vRT}\right\}, 
\end{equation}
where $\tilde{k}_{j}$ is the reaction rate constant, $R$ is the Boltzmann constant, $\Delta G^{\ddagger}_j(T)$ (with the unit $J$) is the the Gibbs energy of activation, $A_v$ is the Avogadro constant, $g^{\rm AS}(T)$ is the free energy of the activated state, and $g(T)=\left(g_1(T),\dots,g_n(T)\right)^\top$ is the free energy vector for each individual molecules (see \cref{gGao}). 
Recall that the temperature $T$ is a function of $(U,N)$ (see \mbox{\cref{pro. fundamental equation})}, and, therefore, $k_j(T)$ is also determined by the system state $(U,N)$. In the sequel of this paper, we will drop the arguments of $T(U,N)$ to simply the notation; please keep in mind that $T$ is a function of $(U,N)$ rather than an independent variable or a fixed constant unless specified.

In line with the above facts, we can modify the above chemical reaction scheme by the following {\em pseudo-thermochemical equation}
\begin{equation}{\label{eq. pseudo-thermochemical equation}}
\alpha_{1,j} X_{1}+ 
\cdots + \alpha_{n,j} X_{n} \ce{->[$\tilde k_j, ~ g^{\rm AS}_j(T)$]} 
\tilde\alpha_{1,j} X_{1}+ 
\cdots+ \tilde\alpha_{n,j} X_{n},   \quad  \Delta \mathcal{U}_j(U,N), \quad j=1,...,r,
\end{equation}
where mass variations are indicated by the stoichiometric coefficients, the instantaneous internal energy change is presented by $\Delta\mathcal{U}_j(U,N)$, and the transition state theory is depicted by constants $\tilde k_j$ and $g^{\rm AS}_j(T)$.
Note that \eqref{eq. pseudo-thermochemical equation} is different from the thermochemical equation, in which the variation of enthalpy under the standard condition\footnote{In chemistry, the standard condition means that the system is at one-atmosphere pressure, and the temperature is 298.15K (25$^{\circ}$C).} is presented instead of $\Delta\mathcal{U}_j(U,N)$.
Clearly, \eqref{eq. pseudo-thermochemical equation} and the thermochemical equation have different physical meanings, as a consequence of which we call \eqref{eq. pseudo-thermochemical equation} the pseudo-thermochemical equation.
In the sequel of this paper, we skip the arguments of the function $\Delta \mathcal{U}_j(U,N)$ to simplify notations; please keep in mind that $\Delta \mathcal{U}_j$ is a function rather than a constant term. Similar to the isothermal case, we can also view pseudo-thermochemical equations \eqref{eq. pseudo-thermochemical equation} as a graph, where complexes are vertexes, reaction arrows are edges, and $\Delta\mathcal{U}_j$, $\tilde k_j$, and $g^{\rm AS}_j(T)$ ($j=1,\dots,r$) are parameters associated with edges. Referring to \cref{def. isothermal crn}, we define non-isothermal CRN as follows. 

\begin{definition}[Non-isothermal CRN] {\label{def. non-isothermal crn}}
	A non-isothermal CRN is a quintuplet $(\mathcal{S},\mathcal{C}, \bar{\mathcal{R}},\bar{\mathcal K}, \mathcal G)$, where 
	\begin{itemize}
		\item  $\mathcal{S}\triangleq\{X_1,\dots, X_n\}$ is the species set representing the considered substances,
		\item $\mathcal{C}\triangleq\{y_1,\dots, y_m\}$ is the complex set consisting of all distinguished complexes where $m$ is the size of the set.
		\item $\bar {\mathcal{R}}\triangleq\left\{\left(y_{\sigma_{1}}\to y_{\pi_{1}}, \Delta\mathcal{U}_1 \right),\dots,\left(y_{\sigma_{r}}\to y_{\pi_{r}}, \Delta\mathcal{U}_r\right)\right\}$ is the reaction set with $\sigma_{j}$ and $\pi_{j}$ $(1\leq j\leq r)$ being indexes of the substrate complex and product complex of the $j$-th reaction. Particularly, for each reaction, $\sigma_{j}$ and $\pi_{j}$ should not be the same unless $y_{\sigma_{j}}$ and $y_{\pi_{j}}$ are both zero complexes.
		\item $\bar{\mathcal{K}}\triangleq(\tilde{k}_1,\dots,\tilde{k}_r)$. 
		\item $\mathcal{G}\triangleq (g^{\rm AS}_1(T),\dots,g^{\rm AS}_r(T))$.
	\end{itemize}
\end{definition}

Compared with the isothermal CRN $(\mathcal{S},\mathcal{C}, {\mathcal{R}},\mathcal K)$ (see \cref{def. isothermal crn}), the non-isothermal CRN attaches a new term, $\Delta\mathcal{U}_j$, to each reaction edge, while preserving other basic structures (the species set, the complex set, and connection relations).
Therefore, from the graph theory, the triplet $(\mathcal{S},\mathcal{C}, \bar {\mathcal{R}})$ that represents the skeleton of a non-isothermal CRN can be alternatively described by the complex matrix, $Y\triangleq(y_1,\dots,y_{m})\in\mathbb{R}^{n\times m}$, the incidence matrix $D$ of the form
\begin{equation*}
D_{i,j}\triangleq\left\{
\begin{array}{cc}
1,   & i=\pi_{j},  \\
-1,  & i=\sigma_{j}, \\
0,   & \text{otherwise},
\end{array}
\right.
\quad i\in\{1,\dots,m\} \text{~and~} j\in\{1,\dots,r\},
\end{equation*}
and the energy variation matrix $\Delta \mathcal U\triangleq\left(\Delta \mathcal U_1,\dots,\Delta \mathcal U_r\right)$.
Also, note that though ${\mathcal{K}}$ and $\bar{\mathcal{K}}$ both consist of constants, their entries have different physical meanings (see \eqref{eq. erying's law}); specifically, $\tilde k_j$ is a ``part" of $k_j$. 

Following mass-action kinetics, the reaction rate can be expressed as
\begin{equation}{\label{eq. transition state theory}}
v_{j}(U,N) = \tilde{k}_{j}T \exp\left\{-\frac{g^{\rm AS}_j(T)-y_{\sigma_{j}}^{\top}g(T)}{A_vRT}\right\} \Exp{\left(y_{\sigma_{j}}^{\top} \Ln N\right)},
\quad j=1,\dots,r.
\end{equation}
Imitating the dynamic model in isothermal CRN, i.e., \cref{eq. dynamics of isothermal CRN}, we can write the current one to be
\begin{equation}{\label{eq. dynamics of non-isothermal CRNs}}
\left(
\begin{array}{c}
\dot U \\  \dot N 
\end{array}
\right)
= 
\left(
\begin{array}{c}
\Delta \mathcal U \\ 	\Gamma 
\end{array}
\right) v(U,N),
\end{equation}
where $v(U,N)=\left(v_{1}(U,N),\dots,v_{r}(U,N)\right)^{\top}$. Note that we write the reaction rate as a function of $(U,N)$ in the above two equations, because $T$ is a function of $(U,N)$.

In this paper, we concern ourselves with analyzing the dynamic behaviors of the non-isothermal reaction system \eqref{eq. dynamics of non-isothermal CRNs}.
Particularly, we will show a non-isothermal analogy to \cref{thm. isothermal detailed balanced}. To this end, some assumptions related to the energy variation matrix $\Delta \mathcal U $ and the function set $\mathcal G$ should be made from the viewpoint of thermodynamics, which enables the network to describe open systems and paves the way for further analyses.

\subsection{Thermodynamic constraints on $\Delta \mathcal U $ and $\mathcal G$}\label{subsection thermodynamic constrains on U and G}
From the viewpoint of thermodynamics, for different network systems, such as isolated systems, isothermal systems, and those in-between, the matrix $\Delta \mathcal{U} $ and set $\mathcal{G}$ are different. In this subsection, we utilize thermodynamic knowledge (see \mbox{\cref{section thermodynamics}}) to put some constraints on them according to different systems.

For isolated systems where no energy or mass flux takes place at the boundary, the internal energy is conserved based on the first law of thermodynamics. Therefore, each reaction in this case has $\Delta\mathcal{U}_j=0$. 
Yet for isothermal systems, to keep the temperature fixed, each reaction in the form of \cref{eq. pseudo-thermochemical equation} admits $$\Delta\mathcal{U}_j= \left(y_{\pi_{j}}-y_{\sigma_{j}}\right)^{\top} u(T_e),$$ where $T_e$ is the fixed temperature (usually the environment temperature), and $u(\cdot)$ is the vector of energies of individual molecules (see \eqref{eq. energies of individual molecules}).

Isolated systems and isothermal systems are two kinds of extreme systems, where heat exchanges are, respectively, zero and infinitely fast. 
For a system in-between, we can model the heat exchange process by a pseudo-thermochemical equation
\begin{equation*}{\label{eq. heat exchange by pseudo-thermochemical equations}}
\emptyset \ce{->[$ h, ~ A_vRT\ln T$]}  \emptyset, \quad  T_e-T,
\end{equation*} 
where $\emptyset$ is the linear combination of substances with respect to a zero vector, $h$ is a constant depending on heat fluxes, and $T_e$ is the environmental temperature.
The validity of this modeling approach is shown as follows.
Such an artificial reaction takes place at the rate of $h$ and contributes to a change of the internal energy by $T_e-T $ after each firing (see \eqref{eq. dynamics of non-isothermal CRNs} and \eqref{eq. erying's law}).
Consequently, its effect is equivalent to the Fourier's law that describes the process of heat exchanges between two surfaces in contact.
Notably, the above pseudo-thermochemical equation requires a non-isothermal CRN to involve reactions having at the same time a zero substrate complex and a zero product complex (c.f. \cref{def. non-isothermal crn}), which is not the case in isothermal CRN. Also, for this system the mass fluxes can be modeled by the following pseudo-thermochemical equations
\begin{equation*}
\emptyset \ce{->[$\tilde q^{\text i{in}}_i, ~ A_vRT\ln T $]}  X_i,  \quad u_i(T_e) \quad \text{and} \quad 
X_i \ce{->[$\tilde q^{\text {out}}_i, ~ A_vRT\ln T +g_i(T) $]} \emptyset,  \quad -u_i(T),
\end{equation*} 
where the first ``reaction" brings one molecule of the $i$-th substance and $u_i(T_e)$ energy to the system at the rate of $\tilde q^{\text {in}}_i$, and the second reaction remove one molecule of the $i$-th substance and $u_i(T)$ energy at the rate of $\tilde q^{\text {out}}_i N_i$.

As far as non-isothermal CRN systems are concerned, the instantaneous internal energy change $\Delta\mathcal{U}_j$ of the $j$th ($j=1,..,r$) real chemical reactions should be zero due to the first law of thermodynamics, which suggests the internal energy to change only in cases when mass fluxes or heat exchanges take place.

To emphasize different natures of the above-mentioned processes, we denote three reaction subsets, $\bar {\mathcal{R}}_{\rm CR}$, $\bar {\mathcal{R}}_{\rm IO}$, and $\bar {\mathcal{R}}_{\rm HE}$, as follows, depicting, respectively, real chemical reactions, mass fluxes, and heat exchanges. 
\begin{align}
\bar {\mathcal{R}}_{\rm CR} \triangleq 
\big\{ &  \left(y_{\sigma_{j}}\to y_{\pi_{j}}, \Delta\mathcal{U}_j \right)\in \bar{\mathcal{R}} ~\big|~ y_{\sigma_{j}}\neq \mathbbold{0}_{n},~y_{\pi_{j}}\neq \mathbbold{0}_{n}, \text{~and~ $\Delta\mathcal{U}_j$ is constant} \big\} \notag\\
\bar {\mathcal{R}}_{\rm IO} \triangleq  \notag
\big\{ & 	\left(y_{\sigma_{j}}\to y_{\pi_{j}}, \Delta\mathcal{U}_j \right)\in \bar{\mathcal{R}} ~\big|   \\
& \notag \text{Each element is either}
\left(\mathbbold{0}_n \to \delta_{i}, u(T_e)\right) \text{~or~} \left(\delta_{i} \to \mathbbold{0}_n, -u(T)\right),~i=1,\dots,n.\big\} \\
\bar {\mathcal{R}}_{\rm HE} \triangleq  \notag
\big\{ &
\left(y_{\sigma_{j}}\to y_{\pi_{j}}, \Delta\mathcal{U}_j \right)\in \bar{\mathcal{R}} ~\big|~ 
y_{\sigma_{j}} = y_{\pi_{j}}= \mathbbold{0}_{n}, \text{ and } \Delta\mathcal{U}_j=T_e-T
\big\} 
\end{align}
Denote the sizes of the above reaction subsets by $r_{\rm CR}$, $r_{\rm IO}$ and $r_{\rm HE}$, respectively.
Therefore, the above-discussed properties of the energy variation matrix $\Delta \mathcal U$ and the function set $\mathcal G$ can be concluded by the following assumptions.
\begin{condition}{\label{condition. 2}}
	$\bar {\mathcal{R}}=\bar {\mathcal{R}}_{\rm CR}\bigcup \bar {\mathcal{R}}_{\rm IO}\bigcup\bar {\mathcal{R}}_{\rm HE}$.
	For simplicity, we assume 
	\begin{equation*}
	\left(y_{\sigma_{j}}\to y_{\pi_{j}}, \Delta\mathcal{U}_j \right) \in \left \{
	\begin{array}{ll}
	\bar {\mathcal{R}}_{\rm CR}, & 0< j \leq r_{\rm CR}, \\
	\bar {\mathcal{R}}_{\rm IO}, &  r_{\rm CR}< j \leq r_{\rm CR} + r_{\rm IO}, \\
	\bar {\mathcal{R}}_{\rm IO}, & r_{\rm CR} + r_{\rm IO}< j \leq r. 
	\end{array}
	\right.
	\end{equation*}
\end{condition}

\begin{condition}\label{condition. 3}
	One of the following statements about $\Delta\mathcal{U}_j $ is true.
	\begin{itemize}
		\item For each reaction in $\bar {\mathcal{R}}_{\rm CR}$, we have $\Delta\mathcal{U}_j(U,N) =0$.
		\item For each reaction in $\bar {\mathcal{R}}_{\rm CR}$, we have $\Delta\mathcal{U}_j(U,N)= \left(y_{\pi_{j}}-y_{\sigma_{j}}\right)^{\top} u(T_e)$ and $r_{\rm IO}=r_{\rm HE}=0$.
	\end{itemize}
\end{condition}

\begin{condition}{\label{condition. 4}}
	$\Delta G_j^{\ddagger}=RT\ln T$ for each reaction in $\bar{\mathcal{R}}_{IO}$ and $\bar{\mathcal{R}}_{HE}$.
	In other words, if $y_{\sigma_{j}}=\mathbbold{0}_n$, then $g^{\rm AS}_{j}(T)=A_vRT\ln T$;
	if $y_{\sigma_{j}}=\delta_{i}$ and $y_{\pi_{j}}=\mathbbold{0}_n$, then 
	$g^{\rm AS}_{j}(T)=A_vRT\ln T+g_{i}(T)$.
\end{condition}

\subsection{Invariant manifolds}

Provided with \cref{condition. 2}, we can rewrite the incidence matrix as $$D=\left(D_{\rm CR},D_{\rm IO},D_{\rm HE}\right)$$
and the energy variation matrix as
$$\Delta \mathcal U(U,N)=\left(\Delta \mathcal U_{\rm CR}(U,N), \Delta \mathcal U_{\rm IO}(U,N),\Delta \mathcal U_{\rm HE}(U,N)\right),$$
where $D_{\ell}$ and $\Delta \mathcal U_{\ell}$ are, respectively, the incidence matrix and the energy variation matrix with respect to the subnetwork $\bar {\mathcal{R}}_{\ell}$ ($\ell\in\{\rm CR, IO, HE \}$).
Therefore, we can divide the vector field of \eqref{eq. dynamics of non-isothermal CRNs} into three parts as follows, which represent the effects of real chemical reactions, mass fluxes, and heat exchanges, respectively,
\begin{align}{\label{eq. dynamics of non-isothermal crn splited}}
\left(
\begin{array}{c}
\dot U \\ 	\dot N 
\end{array}
\right)
=& 
\left(
\begin{array}{c}
\Delta \mathcal U_{\rm CR} \\ YD_{\rm CR}
\end{array}
\right) v_{CR}(U,N)
+
\left(
\begin{array}{c}
\Delta \mathcal U_{\rm IO} \\ YD_{\rm IO} 
\end{array}
\right) v_{IO}(U,N) 
+
\left(
\begin{array}{c}
\Delta \mathcal U_{\rm HE} \\YD_{\rm HE}  
\end{array}
\right) v_{HE}(U,N). 
\end{align}
Here, $v_{CR}(U,N)=\left(v_{1}(U,N),\dots,v_{r_{CR}}(U,N)\right)^{\top}$ depicts reaction rates of $\bar{\mathcal{R}}_{\rm CR}$,
$v_{IO}(U,N)=\left(v_{r_{CR}+1}(U,N),\dots,v_{r_{CR}+r_{IO}}(U,N)\right)^{\top}$ depicts reaction rates of $\bar{\mathcal{R}}_{\rm IO}$,
and $v_{HE}(U,N)=\left(v_{r_{CR}+r_{IO}+1}(U,N),\dots,v_{r}(U,N)\right)^{\top}$ depicts reaction rates of $\bar{\mathcal{R}}_{\rm HE}$.

Similar to the stoichiometric matrix $\Gamma=YD$ for the isothermal case, we define the stoichiometric-like matrix as follows to indicate the potential change of the state,
\begin{equation}{\label{eq. invariant manifold}}
\tilde{ \Gamma} \triangleq 
\left\{
\begin{array}{ll}
\left(
\begin{array}{c}
\Delta \mathcal U \\ \Gamma 
\end{array}
\right),  
& \text{if $\Delta \mathcal U$ is a constant matrix,} \\
\left(
\begin{array}{cc}
\mathbbold{0}^{\top}_n & 1\\  \Gamma& \mathbbold{0}_n  
\end{array}
\right),
& \text{otherwise.}
\end{array}
\right.	
\end{equation}
From \eqref{eq. dynamics of non-isothermal CRNs} (also \eqref{eq. dynamics of non-isothermal crn splited}), we can observe that the increment of the state always belongs to the linear space $\text{Im} \tilde{ \Gamma}$, and, therefore, can only evolve in an invariant set $$\left((U(0),N(0))^{\top}+\text{Im} \tilde{ \Gamma}\right)\bigcap\mathbb{R}^{n+1}_{\geq 0} \bigcap \{(U,N)~|~U\geq N^{\top}u(0)\},$$
where the inequality $U\geq N^{\top}u(0)$ suggests the temperature to be non-negative. 
Parallel to the isothermal case, we term the linear space $\text{Im} \tilde{ \Gamma}$ as the \textit{stoichiometric-like subspace}, the above invariant sets as the \textit{stoichiometric-like compatibility class}, and the interior of the invariant set,
\begin{equation}
{\mathcal{PS}}(U(0),N(0))\triangleq\left((U(0),N(0))^{\top}+\text{Im} \tilde{ \Gamma}\right)\bigcap\mathbb{R}^{n+1}_{> 0} \bigcap \left\{(U,N)^{\top}~|~U> N^{\top}u(0)\right\},
\end{equation}
as the \textit{positive stoichiometric-like compatibility class}.

\begin{remark}\label{rem_3.2}
	Compared with the work in \cite{wang2016irreversible,wang2018port}, our model $(\mathcal{S},\mathcal{C}, \bar{\mathcal{R}}, \bar{\mathcal{K}}, \mathcal{G})$ utilizes the transition state theory to provide a much clearer interpretation to reaction kinetics and a close connection between the kinetics and thermodynamic properties (c.f. \eqref{eq. transition state theory} and \cref{condition. 4}), which enables the modeling approach to cover a broader class of reaction systems (see \cref{subsection thermodynamic constrains on U and G}) and also greatly benefit the analysis of dynamical behaviors in the sequel of this paper.
	In the meanwhile, our approach can also provide a clear statement of the invariant manifold of the considered dynamic system, which is also essential in the analysis of asymptotic stability.
\end{remark}

\begin{example}{\label{exmp. 2}}
	We consider an isolated chemical reaction system having the following pseudo-thermochemical equations 
	\begin{equation}{\label{eq. example pseudo-thermochemical equations}}
	X_1+X_2  \ce{->[$\tilde k_1, ~ g^{\rm AS}_1(T)$]}  X_3,   \quad 0; \qquad X_3 \ce{->[$\tilde k_2, ~ g^{\rm AS}_2(T)$]}  X_1+X_2,  \quad 0.
	\end{equation}
	Alternatively, the pseudo-thermochemical equation can also be presented by a non-isothermal CRN $(\mathcal{S},\mathcal{C}, \bar{\mathcal{R}},\bar{\mathcal K}, \mathcal G)$ where $\mathcal{S}=\{X_1,X_2,X_3\}$, $\mathcal{C}=\{(1,1,0)^{\top},(0,0,1)^{\top}\}$, $$\bar{\mathcal{R}}=\left\{
	\bigg((1,1,0)^{\top} \to (0,0,1)^{\top},0 \bigg),
	\bigg((0,0,1)^{\top} \to (1,1,0)^{\top},0 \bigg)
	\right\},$$
	$\bar{\mathcal K}=\left(\tilde k_{1}, \tilde k_{2}\right)$ and $\mathcal{G}=\left(g^{\rm AS}_{1}(T),g^{\rm AS}_{2}(T)\right)$.
	Note that, in the above network, both reactions in $\bar{\mathcal{R}}$ belong to $\bar{\mathcal{R}}_{\rm CR}$, i.e., both reactions are ``real" chemical reactions; also \cref{condition. 3} and \cref{condition. 4} are clearly satisfied.
	In this network, the incidence matrix $D$ and the complex matrix $Y$ are given by
	\begin{equation*}
	D=
	\left(
	\begin{array}{cc}
	-1 & 1\\
	1 & -1
	\end{array}
	\right),
	\qquad\qquad
	Y=
	\left(
	\begin{array}{cc}
	1 & 0\\
	1 & 0 \\
	0 & 1
	\end{array}
	\right).
	\end{equation*}
	Finally by \eqref{eq. dynamics of non-isothermal CRNs} or \eqref{eq. dynamics of non-isothermal crn splited}, we can express the dynamics as
	\begin{align}
	\notag 
	\left(
	\begin{array}{c}
	\dot U \\ \dot N 
	\end{array}
	\right)
	&=
	\left(
	\begin{array}{c}
	\Delta \mathcal U \\	\Gamma 
	\end{array}
	\right) v(U,N) \\
	&= 
	\underbrace{\left(
		\begin{array}{cc}
		0 & 0 \\
		-1 & 1  \\
		-1 & 1\\
		1 & -1  \\
		\end{array}
		\right)}_{ \tilde \Gamma}
	\left(
	\begin{array}{cc}
	\tilde k_{1} T \exp\left\{-\frac{g^{\rm AS}_j(T)-g_1(T)-g_2(T)}{A_vRT}\right\} N_1 N_2   \\
	\tilde k_{2} T \exp\left\{-\frac{g^{\rm AS}_j(T)-g_3(T)}{A_vRT}\right\}N_3
	\end{array}
	\right), \notag
	\end{align}
	where the state of the system can only evolve in the stoichiometric-like compatibility class $\left((U(0),N(0))^{\top}+\text{Im} \tilde{ \Gamma}\right)\bigcap\mathbb{R}^{4}_{\geq 0} \bigcap \{(U,N)~|~U\geq N^{\top}u(0)\}$.
\end{example}

\section{Non-isothermal detailed balanced CRNs and their dissipativeness}{\label{section dissipativeness}}
Recall that our goals is to extend basic results of detailed balanced system (see \cref{thm. isothermal detailed balanced}) to the non-isothermal case.
In this section, we define detailed balanced networks for non-isothermal CRNs and establish a compact dynamic formula for them by which strictly dissipativeness can be shown. 
The stability result and detailed balancing of each equilibrium are respectively shown in \cref{thm. stability} and \cref{cor. detailed balancing of each equilibrium}, which extend the first two results of \cref{thm. isothermal detailed balanced}.
The third extension regarding the asymptotic convergence to a unique positive equilibrium will be discussed in the next section.

\subsection{Non-isothermal detailed balanced networks}
The concept of detailed balance consists of two key components, the reversibility and the balance of reaction rates of each reversible pair at an equilibrium.
We first define the reversibility of non-isothermal CRNs. 
\begin{definition}[Reversibility]
	We call a non-isothermal CRN $(\mathcal{S},\mathcal{C}, \bar{\mathcal{R}}, \bar{\mathcal{K}}, \mathcal{G})$ reversible, if for every reaction $(y_{\sigma_{j}}\to y_{\pi_{j}}, \Delta\mathcal{U}_{j})\in \bar{\mathcal{R}}$ there exists a unique $(y_{\sigma_{\tilde j}}\to y_{\pi_{\tilde j}}, \Delta\mathcal{U}_{\tilde j})\in \bar{\mathcal{R}}$ such that $y_{\sigma_{j}}=y_{\pi_{\tilde j}}$ and $y_{\pi_{j}}=y_{\sigma_{\tilde j}}$.
	Moreover, we name $(y_{\sigma_{\tilde j}}\to y_{\pi_{\tilde j}}, \Delta\mathcal{U}_{\tilde j})$ the backward/reverse reaction of  $(y_{\sigma_{j}}\to y_{\pi_{j}}, \Delta\mathcal{U}_{j})$.
\end{definition}

In both isothermal case and non-isothermal case, reversibility means that each reaction has a backward counterpart that switches the substrate complex and the production complex of the former reaction. (Please refer to \cref{iso_CRNT} for the reversibility of a isothermal CRN.)
However, different from the case in isothermal CRNs where reversible networks must have an even number of reactions, reversible non-isothermal CRNs can have an odd number of reactions, as it can have a reaction $(\mathbbold{0}_n\to\mathbbold{0}_n, \Delta\mathcal{U}_j)$ (c.f. \cref{def. non-isothermal crn}) whose reverse reaction is itself. Provided with \cref{condition. 2}, we can also observe that each pair of forward and backward reactions belong to the same reaction subset, that is if $(y_{\sigma_{j}}\to y_{\pi_{j}}, \Delta\mathcal{U}_{j})\in \bar{\mathcal{R}}_{\ell}$ then its backward reaction also belongs to $ \bar{\mathcal{R}}_{\ell}$ ($\ell\in\{\rm CR, IO, HE \}$). For reactions in $\bar{\mathcal{R}}_{\rm CR}$, we further assume the activated states of forward and backward reactions in each pair to be identical.

\begin{condition}{\label{condition. 5}}
	For each pair of forward and backward chemical reactions, $(y_{\sigma_{j}}\to y_{\pi_{j}}, \Delta\mathcal{U}_{j}(U,N))$ and $(y_{\sigma_{\tilde j}}\to y_{\pi_{\tilde j}}, \Delta\mathcal{U}_{\tilde j}(U,N))$, in $\bar{\mathcal{R}}_{\rm CR}$, there is the relation $g^{\rm AS}_{j}(T)=g^{\rm AS}_{\tilde j}(T)$ for all $T>0$.
\end{condition}

From the viewpoint of thermodynamics, \cref{condition. 5} means that the activated states of forward and backward reactions need to cross the same energy barrier to fire reactions (c.f. \cite{atkins2013elements,kondepudi2014modern}). This condition will play an important role in establishing a compact dynamic equation that exhibits at the same time the graphic topology and thermodynamic information, which will be discussed in the next subsection.

We then define detailed balanced networks for non-isothermal CRNs by requiring reaction rates and instantaneous energy changes of reversible reactions to be balanced at some equilibrium.

\begin{definition}[Non-isothermal detailed balanced networks]{\label{def. non-isothermal detailed balanced network}}
	A reversible non-isothermal CRN $(\mathcal{S},\mathcal{C}, \bar{\mathcal{R}}, \bar{\mathcal{K}}, \mathcal{G})$ is detailed balanced, if at some state $(U^*,N^*)$ each pair of forward and backward reactions, $(y_{\sigma_{j}}\to y_{\pi_{j}}, \Delta\mathcal{U}_{j})$ and $(y_{\sigma_{\tilde j}}\to y_{\pi_{\tilde j}}, \Delta\mathcal{U}_{\tilde j})$, satisfy
	\begin{itemize}
		\item $v_{j}(U^*,N^*)=v_{\tilde j}(U^*,N^*)$, i.e.,  reaction rates are balanced,
		\item  $\Delta \mathcal{U}_{j}(U^*,N^*)+\Delta \mathcal{U}_{\tilde j} (U^*,N^*) =0$, i.e., energy changes are balanced. 
	\end{itemize}
	Moreover, we name such state $(U^*,N^*)$ a detailed balanced equilibrium.
\end{definition}

Compared with the corresponding definition for isothermal case (see \cref{iso_CRNT}), non-isothermal detailed balanced networks require one more condition that the energy changes of each pair of reversible reactions are balanced.
This requirement follows from the philosophy that the system is described by the internal energy and mass, and, therefore,
a detailed balanced equilibrium should balance both the mass changes (by the restriction on reaction rates) and internal energy changes (by the restriction on instantaneous energy changes).
The second condition of \cref{def. non-isothermal detailed balanced network} is non-trivial for open systems where $\Delta \mathcal {U}_j$ can vary with respect to the temperature (see the definition of $\bar{\mathcal{R}}_{\rm IO}$) but trivial for isolated or isothermal systems in which $\Delta \mathcal {U}_j$ are always constants. 
Moreover, to detailedly balance the pair $\left(\mathbbold{0}_n \to \delta_{i}, u(T_e)\right)$ and 
$\left(\delta_{i} \to \mathbbold{0}_n, -u(T)\right)$ (or, $(\mathbbold{0}_{n},\mathbbold{0}_{n}, T_e-T)$ and itself), a detailed balanced equilibrium must satisfy $T_e=T^*$, where $T^*$ is the system temperature at the sate $(U^*, N^*)$. 
In other words, a detailed balanced equilibrium satisfies
\begin{equation}{\label{eq. temperature at detailed balanced equilirbium}}
T_e=T^*, \quad \text{if } r_{\rm IO}+r_{\rm HE}\neq 0.
\end{equation}

\begin{remark}
	For isolated isothermal reversible CNRs, Wegscheider's identity\mbox{\cite{wegscheider1902simultane}} reveals the connection between the existence of a detailed balanced equilibrium and the kinetic parameters (only including $\mathcal{K}$) of chemical reactions. Here, based on \mbox{\cref{condition. 5}}, we also recur this identity for isolated non-isothermal reversible CRNs, i.e., $\bar{\mathcal R}=\bar{\mathcal{R}}_{\rm CR}$. Assume that the considered reversible network follows mass-action kinetics and Eyring's law \mbox{\eqref{eq. erying's law}}, then the existence of a detailed balanced equilibrium at a given temperature $T^*$ is equivalent to the validity of Wegscheider's identity\mbox{\cite{gorban2011extended}}
	\begin{equation*}
	\prod_{j=1}^{r} \left( k_j (T^*)\right)^{\lambda_j} 
	=
	\prod_{j=1}^{r} \left( k_{\tilde j} (T^*)\right)^{\lambda_{\tilde j}}, 
	\qquad \forall \lambda\triangleq \left(\lambda_1,\dots, \lambda_r\right)\in \mathbb R^{r}, \text{ s.t. } \Gamma \lambda = \mathbbold{0}_{n},
	\end{equation*}
	where $\tilde j$ is the index of the reverse reaction of the $j$-th reaction. Further utilizing \mbox{\eqref{eq. erying's law} and \mbox{\cref{condition. 5}}}, the above equality can be rewritten as 
	\begin{equation}{\label{eq. pseudo-Wegscheider's identy}}
	\prod_{j=1}^{r} \left( \tilde k_j\right)^{\lambda_j} 
	=
	\prod_{j=1}^{r} \left( \tilde k_{\tilde j} \right)^{\lambda_{\tilde j}}, 
	\qquad \forall \lambda\triangleq \left(\lambda_1,\dots, \lambda_r\right)\in \mathbb R^{r}, \text{ s.t. } \Gamma \lambda = \mathbbold{0}_{n},
	\end{equation}
	which suggests that detailed balanced equilibria at different temperature levels are present or absent simultaneously. 
	Particularly, if $\tilde k_j=\tilde k_{\tilde j}$ holds for any reversible reaction pair, the considered system possesses infinite many detailed balanced equilibria. 
\end{remark}

We use the following example to exhibit the detailed balanced equilibrium in non-isothermal CRNs.

\begin{example}\label{exmp. 3}
	We still consider the non-isothermal system introduced in \cref{exmp. 2} whose pseudo-thermochemical equations follow \eqref{eq. example pseudo-thermochemical equations}.
	Obviously, the non-isothermal CRN is reversible as the two chemical reactions switch each other's substrate complex and product complex. 
	Provided with \cref{condition. 5}, i.e., $g^{AS}_1(T)=g^{AS}_2(T)$, we can observe that \eqref{eq. pseudo-Wegscheider's identy} is satisfied and find a detailed balanced equilibrium $(U^*,N^*)$ with
	\begin{equation*}
	N^*=\left(\exp\left\{-\frac{g_1(1)}{A_v R}\right\}, \exp\left\{-\frac{g_2(1)}{A_v R}\right\}, \frac{\tilde{k}_1}{\tilde{k}_2}\exp\left\{-\frac{g_3(1)}{A_v R}\right\}\right)^{\top}
	~\text{and} ~~
	U^*={N^*}^{\top}u(1),
	\end{equation*}
	which balances both the reaction rates and instantaneous energy changes (c.f. \eqref{eq. transition state theory} and \eqref{eq. pseudo-thermochemical equation}).
	As a result, the network is detailed balanced.
\end{example}

\subsection{A compact formula of detailed balanced networks}
Borrowing the idea and techniques in \cite{van2013mathematical,wang2018port}, we provide, in this subsection, non-isothermal detailed balanced networks with compact dynamic equations that exhibit at the same time network structures and thermodynamic information.
The key to providing such a formula is to rewrite reaction rates using thermodynamic quantities.

As discussed in the previous subsection, a pair of forward and backward reactions must belong to the same reaction subset provided with \cref{condition. 2}.
Therefore, without loss of generality, we assume the $(2j-1)$-th reaction and $2j$-th reaction ($j=1,\dots,\frac{r_{\rm CR}+r_{\rm IO}}{2}$) in a reversible network $(\mathcal{S},\mathcal{C}, \bar{\mathcal{R}}, \bar{\mathcal{K}}, \mathcal{G})$ are a pair of forward and backward reactions.
Then, we define a matrix $B \in \mathbb{R}^{m\times (r_{\rm CR}+r_{\rm IO})/2}$ by
\begin{equation}\label{eq. definition of matrix B}
B_{i,j}\triangleq\left\{
\begin{array}{cc}
1,   & i=\pi_{2j-1},  \\
-1,  & i=\sigma_{2j-1}, \\
0,   & \text{otherwise},
\end{array}
\right.
\quad i\in\{1,\dots,m\} \text{~and~} j\in\{1,\dots,(r_{\rm CR}+r_{\rm IO})/2\}.
\end{equation}
depicting the connection relation in a reversible network.
Moreover, we term $B_{\rm CR}\in \mathbb{R}^{m\times r_{\rm CR}/2}$ as the subblock of the matrix $B$ consisting of the first $(r_{\rm CR}/2)$ columns, and $B_{\rm IO}\in \mathbb{R}^{m\times r_{\rm IO}/2}$ as the subblock of the matrix $B$ consisting of the last $(r_{\rm IO}/2)$ columns. $B_{\rm CR}$ and $B_{\rm IO}$ depict respectively the connection relation in $\bar{\mathcal{R}}_{\rm CR}$ and $\bar{\mathcal{R}}_{\rm IO}$.
In a reversible network, though $B_{\ell}$ is only half of the scale of the incidence matrix $D_{\ell}$ ($\ell\in\{\rm CR, IO\}$), both matrices can represent the topology of $\bar{\mathcal{R}}_{\ell}$ and contain the same information.

\begin{proposition}{\label{prop. stoichiometric compatibility class}}
	In a reversible non-isothermal CRN $(\mathcal{S},\mathcal{C}, \bar{\mathcal{R}}, \bar{\mathcal{K}}, \mathcal{G})$, there hold ${\rm Im} \left(YB\right)={\rm Im} \Gamma$ and $\ker  \left(B^{\top}Y^{\top}\right)=\ker (\Gamma^{\top})$  if \cref{condition. 2} holds.
\end{proposition}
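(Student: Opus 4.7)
The plan is to exploit three facts: (i) reversibility forces the columns of $D$ for each forward--backward pair to be negatives of one another; (ii) under \cref{condition. 2} the heat-exchange block $D_{\rm HE}$ is the zero matrix; and (iii) $B$ is built from $D$ by keeping exactly one representative per reversible pair in $\bar{\mathcal R}_{\rm CR}\cup\bar{\mathcal R}_{\rm IO}$. Once these are in hand, ${\rm Im}(YB)={\rm Im}\,\Gamma$ drops out by a column-span argument, and the kernel equality reduces to the standard identity $\ker(A^{\top})=({\rm Im}\,A)^{\perp}$.

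First I would verify fact (ii). By the definition of $\bar{\mathcal R}_{\rm HE}$, every reaction in that subset satisfies $y_{\sigma_{j}}=y_{\pi_{j}}=\mathbbold{0}_{n}$, so $\sigma_{j}$ and $\pi_{j}$ index the same column of $Y$ (the zero complex). Reading the definition of $D$ as $D_{i,j}=\mathbb{1}_{i=\pi_{j}}-\mathbb{1}_{i=\sigma_{j}}$, the corresponding column of $D$ vanishes. Hence $YD_{\rm HE}=0$, and in particular ${\rm Im}\,\Gamma={\rm Im}(YD_{\rm CR})+{\rm Im}(YD_{\rm IO})$.

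Next I would nail down fact (i). Because a reversible reaction lies in the same subset as its reverse (a direct consequence of \cref{condition. 2}, since $\bar{\mathcal R}_{\rm CR},\bar{\mathcal R}_{\rm IO},\bar{\mathcal R}_{\rm HE}$ are closed under reversal), we may order reactions so that the $(2j{-}1)$th and the $(2j)$th form a pair inside the same $\bar{\mathcal R}_{\ell}$. The identities $\sigma_{2j}=\pi_{2j-1}$ and $\pi_{2j}=\sigma_{2j-1}$ give $D_{\cdot,2j}=-D_{\cdot,2j-1}$, and comparing with \eqref{eq. definition of matrix B} shows that the $j$th column of $B$ coincides with $D_{\cdot,2j-1}$. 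Therefore ${\rm Im}\,D_{\rm CR}={\rm Im}\,B_{\rm CR}$ and ${\rm Im}\,D_{\rm IO}={\rm Im}\,B_{\rm IO}$; multiplying by $Y$ on the left and combining with (ii) yields ${\rm Im}(YB)={\rm Im}\,\Gamma$.

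For the kernel statement I would just take orthogonal complements:
\begin{equation*}
\ker(\Gamma^{\top})=({\rm Im}\,\Gamma)^{\perp}=({\rm Im}(YB))^{\perp}=\ker(B^{\top}Y^{\top}),
\end{equation*}
which finishes the argument. There is no genuine obstacle here; the main care needed is bookkeeping, namely confirming that Condition~2's blockwise ordering is compatible with reversibility (so reverse pairs really sit inside the same block), and that a self-looped reaction at the zero complex contributes a zero column of $D$ rather than being counted with a sign.
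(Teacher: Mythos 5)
Your proposal is correct and follows essentially the same route as the paper, whose proof is the one-line observation that $B$ and the incidence matrix $D$ can be derived from each other; you have simply made explicit the bookkeeping behind that claim (paired columns of $D$ being negatives of each other, the vanishing of $YD_{\rm HE}$, and passage to kernels via orthogonal complements). No gaps.
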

\begin{proof}
	The result follows immediately from the fact that the matrix $B$ and the incidence matrix $D$ can be derived from each other.
\end{proof}

For a network $(\mathcal{S},\mathcal{C}, \bar{\mathcal{R}}, \bar{\mathcal{K}}, \mathcal{G})$ that admits a positive detailed balanced equilibrium $(U^*,N^*)$ with $U^{*}>{N^*}^{\top}u(0)$,
we term $K_{\rm CR}(T) \in \mathbb{R}^{\frac{r_{\rm CR}}{2}\times \frac{r_{\rm CR}}{2}}$ as a diagonal matrix-valued function with 
\begin{eqnarray}
(K_{\rm CR}(T))_{j,j}&\triangleq&\left({T}/{T^*}\right)\exp\left\{-\frac{g^{\rm AS}_{2j-1}(T)}{A_vRT}+\frac{g^{\rm AS}_{2j-1}(T^*)}{A_vRT^*}\right\}
v_{2j-1}(U^*,N^*) \notag \\
&=& \left({T}/{T^*}\right)\exp\left\{-\frac{g^{\rm AS}_{2j}(T)}{A_vRT}+\frac{g^{\rm AS}_{2j}(T^*)}{A_vRT^*}\right\}
v_{2j}(U^*,N^*), \notag
\end{eqnarray}
where $T^*$ is the temperature at the state $(U^*,N^*)$, and the second equality follows immediately from the definition of the detailed balanced equilibrium and \cref{condition. 5}.
By \eqref{eq. transition state theory}, we can also write the diagonal element of $K_{\rm CR}(T)$ as 
\begin{eqnarray}
(K_{\rm CR}(T))_{j,j}&=&\tilde{k}_{2j-1}T\exp\left\{-\frac{g^{AS}_{2j-1}(T)}{A_vRT}\right\} \exp\left(y^{\top}_{\sigma_{2j-1}}\left(\Ln N^* +\frac{g(T^*)}{A_vRT^*}\right)\right) {\label{eq. KCR forward}} \notag \\
&=& \tilde{k}_{2j}T\exp\left\{-\frac{g^{AS}_{2j}(T)}{A_vRT}\right\} \exp\left(y^{\top}_{\pi_{2j-1}}\left(\Ln N^* +\frac{g(T^*)}{A_vRT^*}\right)\right) {\label{eq. backward}}, \notag
\end{eqnarray}
from which we can learn $K_{\rm CR}(T)$ is always positive definite for $T>0$.
By \eqref{eq. transition state theory} and the notation of $K_{\rm CR}(T)$, the rates of chemical reactions in $\bar{\mathcal{R}}_{\rm CR}$ can be expressed as
\begin{align}
v_{2j-1}(U,N)&= (K_{\rm CR}(T))_{j,j} \exp\left(y^{\top}_{\sigma_{2j-1}} \left( \Ln N+\frac{g(T)}{A_v R T}-\Ln N^*-\frac{g(T^*)}{A_v R T^*} \right) \right), \label{eq. forward reaction in CR} \\
v_{2j} (U,N) &= (K_{\rm CR}(T))_{j,j} \exp\left(y^{\top}_{\pi_{2j-1}} \left( \Ln N+\frac{g(T)}{A_v R T}-\Ln N^*-\frac{g(T^*)}{A_v R T^*} \right) \right) \label{eq. back reaction in CR}
\end{align}
for $j=1,\dots, r_{\rm CR}/2$, and, moreover,we can conclude 
\begin{align}
YD_{\rm CR} \cdot v_{CR}(U,N)
&= \sum_{j=1}^{{r_{\rm CR}}/{2}} YB_{\cdot j} \left(v_{2j-1}(U,N) -v_{2j}(U,N)\right)  \notag \\
&= -YB_{\rm CR} K_{\rm CR}(T)B_{\rm CR}^{\top} \Exp\left(Y \left( \frac{\mu}{A_v R T} - \frac{\mu^*}{A_v RT^*} \right) \right),  \label{eq. compact formula CR}
\end{align} 
where $\mu=\Ln N+\frac{g(T)}{A_v R T}$ is the chemical potential of the system (c.f. \eqref{eq. fundamental equation} and \cref{pro. fundamental equation}) and $\mu^*=\Ln N^*+\frac{g(T^*)}{A_v R T^*}$ is the chemical potential of the system at $(U^*,N^*)$.

Similarly, we term $K_{\rm IO}\in\mathbb{R}^{{\frac{r_{\rm CR}}{2}\times \frac{r_{\rm CR}}{2}}}$ as a constant diagonal matrix with
\begin{equation*}
\left(K_{\rm IO}\right)_{j,j}\triangleq v_{2j-1+r_{\rm CR}}(U^*, N^*) = v_{2j+r_{\rm CR}}(U^*, N^*),
\end{equation*} 
where the second equality follows from the definition of the detailed balanced equilibrium.
Provided with \cref{condition. 4} which suggests the kinetics of reactions in $\bar{\mathcal{R}}_{\rm IO}$ to follow mass-action laws only, we can write diagonal elements of $K_{\rm IO}$ alternatively by
\begin{equation*}
\left(K_{\rm IO}\right)_{j,j} = \tilde{k}_{2j-1+r_{\rm CR}} \exp \left( y^{\top}_{\sigma_{2j-1+r_{\rm CR}}} \Ln N^* \right)= \tilde{k}_{2j+r_{\rm CR}} \exp \left( y^{\top}_{\sigma_{2j+r_{\rm CR}}} \Ln N^* \right),
\end{equation*}
from which we can learn $K_{\rm IO}$ is also positive definite. 
By \eqref{eq. transition state theory}, \cref{condition. 4} and the notation of $K_{\rm IO}$, the rates of reactions in $\bar{\mathcal{R}}_{\rm IO}$ can be expressed as 
\begin{align}
v_{2j-1+r_{\rm CR}}(U,N)&= (K_{\rm IO}(T))_{j,j} \exp\left(y^{\top}_{\sigma_{2j-1+r_{\rm CR}}} \left( \Ln N-\Ln N^* \right) \right), \label{eq. forward reaction rate in IO} \\
v_{2j+r_{\rm CR}} (U,N) &= (K_{\rm IO}(T))_{j,j} \exp\left(y^{\top}_{\pi_{2j-1+r_{\rm CR}}} \left( \Ln N-\Ln N^*\right) \right)\label{eq. back reaction rate in IO}
\end{align}
for $j=1,\dots,r_{\rm IO}/2$, and, therefore, we can conclude 
\begin{align}
YD_{\rm IO} \cdot v_{IO}(U,N)
&= \sum_{j=1}^{{r_{\rm CR}}/{2}} YB_{\cdot j+r_{\rm CR}/2} \left(v_{2j-1+r_{\rm CR}}(U,N) -v_{2j+r_{\rm CR}}(U,N)\right)  \notag \\
&= -YB_{\rm IO} K_{\rm IO}B_{\rm IO}^{\top} \Exp\left(Y\left( \Ln N-\Ln N^* \right)  \right)  \label{eq. compact formula IO}.
\end{align} 

Also, note that the reaction in $\bar{\mathcal{R}}_{\rm HE}$, if any, points from a zero complex to itself, so $Y D_{\rm HE}$ is always zero.
Moreover, provided with \cref{condition. 4}, we can write the rate of the reaction in $\bar{\mathcal{R}}_{\rm HE}$ (if any) as
\begin{equation}\label{eq. compact formula HE}
v_{r_{\rm CR}+r_{\rm IO}+1}(U,N)= \tilde k_{r_{\rm CR}+r_{\rm IO}+1} \left(T_e-T\right)=\tilde k_{r_{\rm CR}+r_{\rm IO}+1} \left(T^*-T\right),
\end{equation} 
where the second equality follows from \eqref{eq. temperature at detailed balanced equilirbium}.

Finally, by \eqref{eq. dynamics of non-isothermal crn splited}, \eqref{eq. compact formula CR} \eqref{eq. compact formula IO}, and \eqref{eq. compact formula HE}, the  dynamics of a non-isothermal detailed balanced network $(\mathcal{S},\mathcal{C}, \bar{\mathcal{R}}, \bar{\mathcal{K}}, \mathcal{G})$ can be expressed  as
\begin{align}
\dot U =& \Delta \mathcal U_{\rm CR} v_{CR}(U,N)+\Delta \mathcal U_{\rm IO} v_{IO}(U,N) +  \tilde k_{r_{\rm CR}+r_{\rm IO}+1} \left(T^*-T\right)\chi(r_{\rm HE}\neq 0), \notag \\
\dot N =& -YB_{\rm CR} K_{\rm CR}(T)B_{\rm CR}^{\top} \Exp\left(Y \left( \frac{\mu}{A_v R T} - \frac{\mu^*}{A_v RT^*}  \right) \right)  \label{eq. compact formula} \\
& -YB_{\rm IO} K_{\rm IO}B_{\rm IO}^{\top} \Exp\left(Y\left( \Ln N-\Ln N^* \right)  \right), \notag 
\end{align}
if \cref{condition. 2}, \cref{condition. 4}, and  \cref{condition. 5} hold.
Compared with \eqref{eq. dynamics of non-isothermal CRNs} and \eqref{eq. dynamics of non-isothermal crn splited}, the dynamic equation \eqref{eq. compact formula} depicts the rate and the driving force of each reaction alternatively by thermodynamic quantities, $\mu(U,N)$ and $\Ln N $ $\left(=\frac{\mu(U,N)-{g(T)}}{A_vRT}\right)$. 
Although the formula \eqref{eq. dynamics of non-isothermal crn splited} seems complicated in expression, it is compact with respect to information, as it exhibits at the same time the network topology, $B$, and the thermodynamic information, $\mu$.
In the next subsection, some basic properties of detailed balanced networks, mainly the stability and the detailed balancing of each equilibrium, are carried out based on this formula. 

In the isothermal case where $T\equiv T^*=T_e$, the dynamics \eqref{eq. compact formula} becomes 
\begin{align}
\dot U =& -\left(u(T_e)\right)^{\top}YB K B^{\top} \Exp\left(Y \left(\Ln N-\Ln N^*\right)\right) \notag\\
\dot N =& -YB K B^{\top} \Exp\left(Y \left(\Ln N-\Ln N^*\right)\right)  \label{eq. compact formula of arjan's} 
\end{align}
where 
\begin{equation*}
K\triangleq \left(
\begin{array}{cc}
K_{\rm CR} (T_e) & \\ & K_{IO} 
\end{array}
\right).
\end{equation*}
The dynamic equation \eqref{eq. compact formula of arjan's} is exactly the one provided in \cite{van2013mathematical} for isothermal CRNs; therefore, our dynamic equations \eqref{eq. compact formula} can be viewed as an extension of that formula (in \cite{van2013mathematical}) to non-isothermal cases.
Compared with the corresponding formula in \cite{van2013mathematical} where $K$ is always constant (c.f. \eqref{eq. compact formula of arjan's}), the dynamic equation \eqref{eq. compact formula} has a diagonal matrix-valued function $K_{\rm CR}(T)$ which can vary with respect to the temperature due to the transition state theory and exhibits the physical nature of non-isothermal systems. 

In another special case where the system is isolated, i.e. $r_{\rm IO}=r_{\rm HE}=0$, the dynamics \eqref{eq. compact formula} become 
\begin{align}
\dot U =& 0, \notag \\
\dot N =& -YB_{\rm CR} K_{\rm CR}(T)B_{\rm CR}^{\top} \Exp\left(Y \left( \frac{\mu}{A_v R T} - \frac{\mu^*}{A_v RT^*}  \right) \right),  \notag
\end{align}
which is exactly the formula provided in \cite{wang2018port}.
This formula can also be written in the form of a port-Hamiltonian system (see \cite[(21)-(23)]{wang2018port}). 
Compared with it, our formula \eqref{eq. compact formula} has additional terms, ``$-YB_{\rm IO} K_{\rm IO}B_{\rm IO}^{\top} \Exp\left(Y\left( \Ln N-\Ln N^* \right)  \right)$" and ``$\tilde k_{r_{\rm CR}+r_{\rm IO}+1} \left(T^*-T\right)\chi(r_{\rm HE}\neq 0)$", depicting the effect of boundary fluxes and heat exchanges, and, therefore, is able to cover a broader class of systems under the same framework.

\subsection{Stability and detailed balancing of each equilibrium}
We first show the stability of a detailed balanced network through Lyapunov's second method.
We term  
\begin{align}{\label{eq. availability function of entropy}}
&S_{\mathcal A}(U, N) \\
& \triangleq -S(U,N) + \frac{\partial S (U^{*},N^{*})}{\partial U} \left(U-U^*\right)+ \frac{\partial S (U^{*},N^{*})}{\partial N} \left(N-N^*\right)+S(U^{*},N^{*}) \notag
\end{align}
as the availability function of the negative entropy with respect to a positive detailed balanced equilibrium $(U^{*},N^{*})$, which depicts the difference between the entropy and the supporting hyperplane at $(U^{*},N^{*})$ (c.f. \cite{ydstie1997process,alonso2001stabilization}).
Provided with the condition \eqref{eq. at least linear growth of the Hamiltonian}, the function $S_{\mathcal A}(U, N)$ is strictly positive at any positive state other than $(U^{*},N^{*})$ due to the strict concavity of the entropy (see \cref{pro. strict concavity}).
The positive definiteness, together with thermodynamics underlying this function, suggests $S_{\mathcal A}(U, N)$ to be a good Lyapunov function candidate for investigating thermodynamic processes \cite{ydstie1997process,ydstie2002passivity,alonso2001stabilization}. 
In the isothermal case where $T\equiv T^{*}$, we can rewrite this availability function, by \eqref{eq. small S help}, as
\begin{equation}{\label{eq. connection of the availability function and pseudo helmoholtz free energy}}
S_{\mathcal A}(U, N)
=\frac{G(T^*,N)-\frac{\partial G(U^{*},N^*)}{\partial N} (N-N^{\dagger})-G(T^{*},N^{*})}{T^{*}}
= \frac{1}{A_v R}G_{\mathcal{A}}(N)
\end{equation}
which implies that two functions, $S_{\mathcal A}(U, N)$ and $G_{\mathcal{A}}(N)$, are closely connected and that $S_{\mathcal A}(U, N)$ can automatically serve as a Lyapunov function for detailed balanced networks whose temperature is fixed. 
Moreover, $S_{\mathcal A}(U, N)$ is also proven to be a Lyapunov function for isolated non-isothermal detailed balanced networks \cite{wang2018port}.
In the following discussion, we further illustrate that the availability function \eqref{eq. availability function of entropy} serves as a Lyapunov function for every non-isothermal detailed balanced network.

\begin{theorem}{\label{thm. stability}}
	Let $(\mathcal{S},\mathcal{C}, \bar{\mathcal{R}}, \bar{\mathcal{K}}, \mathcal{G})$ be a non-isothermal detailed balanced network admitting a positive detailed balanced equilibrium $(U^*,N^*)$, where $U^*=(N^*)^{\top}u(T_e)$ if $\Delta \mathcal{U}_{\rm CR}(U,N )\neq \mathbbold{0}^{r_{\rm CR}}$. 
	Provided with \eqref{eq. at least linear growth of the Hamiltonian}, \cref{condition. 2}, \cref{condition. 3}, \cref{condition. 4}, and \cref{condition. 5}, it follows that  
	\begin{align*}
	\dot{S}_{\mathcal A}(U, N) \leq 0, \quad \forall (U,N) \in 
	\left\{\begin{array}{lr}
	\mathbb{R}^{n+1}_{> 0} \bigcap \{U> N^{\top}u(0)\} & \text{if } \Delta \mathcal{U}_{\rm CR}\neq \mathbbold{0}^{r_{\rm CR}} \\
	\mathbb{R}^{n+1}_{> 0} \bigcap \{U= N^{\top}u(T_e)\} & \text{if } \Delta \mathcal{U}_{\rm CR}= \mathbbold{0}^{r_{\rm CR}} \\
	\end{array}
	\right.
	\end{align*}
	where the equality holds if and only if 
	$ \nabla{S}_{\mathcal A}(U, N)
	=\left(\frac{1}{T^*}-\frac{1}{T},\frac{\mu}{T}-\frac{\mu^*}{T^*}\right)^{\top} 
	\in \ker (\tilde{ \Gamma}^{\top})$.
	Therefore, $S_{A}(U, N)$ is a Lyapunov function for the system \eqref{eq. compact formula} rendering the equilibrium $(U^*,N^*)$ to be stable.
\end{theorem}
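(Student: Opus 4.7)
My plan is to verify that $S_{\mathcal A}$ is a Lyapunov function by computing $\dot S_{\mathcal A}$ along trajectories of the compact dynamics \eqref{eq. compact formula} and showing each of the three physical pieces (chemical reactions, mass fluxes, heat exchange) is separately non-positive. First, using the Gibbs relation $\partial S/\partial U = 1/T$ and $\partial S/\partial N_i = -\mu_i/T$ from \cref{pro. fundamental equation}, direct differentiation of \eqref{eq. availability function of entropy} yields $\nabla S_{\mathcal A} = (1/T^* - 1/T,\; \mu^\top/T - \mu^{*\top}/T^*)^\top$, matching the gradient expression in the theorem statement.

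For the chemical-reaction (CR) contribution, \cref{condition. 3} removes the $\dot U$-contribution on the indicated domain: in case (i) directly since $\Delta \mathcal U_{CR}=0$, and in case (ii) because $\dot U = u(T_e)^\top\dot N$ together with $U^*=(N^*)^\top u(T_e)$ pins the reachable state to $U=N^\top u(T_e)$, hence $T\equiv T_e=T^*$. Setting $\zeta \triangleq \mu/(A_v RT) - \mu^*/(A_v RT^*)$ and $w \triangleq Y\zeta$, the surviving CR term becomes
\[
-A_v R\, w^\top B_{CR} K_{CR}(T) B_{CR}^\top \Exp(w) \;=\; -A_v R \sum_k (K_{CR}(T))_{kk}\bigl(w_{\pi_{2k-1}}-w_{\sigma_{2k-1}}\bigr)\bigl(e^{w_{\pi_{2k-1}}}-e^{w_{\sigma_{2k-1}}}\bigr),
\]
which is $\le 0$ by the elementary inequality $(a-b)(e^a-e^b)\ge 0$ and vanishes iff $B_{CR}^\top Y\zeta=0$. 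The heat-exchange (HE) piece, using $T^*=T_e$ from \eqref{eq. temperature at detailed balanced equilirbium}, reduces to the Fourier-type expression $-\tilde k(T-T_e)^2/(T T_e)\le 0$, vanishing iff $T=T_e$.

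The mass-flux (IO) contribution will be the main obstacle, because $\dot U_{IO}$ mixes $u_i(T_e)$ and $u_i(T)$ while $\dot N_{IO}$ is purely in the log-concentration variable, so the two do not immediately combine into the clean $(a-b)(e^a-e^b)$ structure of the CR case. My approach is to substitute $T^*=T_e$ together with the Helmholtz decomposition $\mu_i = u_i(T) - Ts_i(T) + A_v RT\ln N_i$ into the per-pair contribution and, after a short algebraic simplification, rewrite it (with $x_i = N_i/N_i^*$, $u(T)\triangleq u_i(T)-u_i(T_e)$, $s(T)\triangleq s_i(T)-s_i(T_e)$) as
\[
(K_{IO})_{jj}\Bigl\{A_v R(1-x_i)\ln x_i \;+\; \bigl[u(T)/T - s(T)\bigr] \;+\; x_i\bigl[s(T) - u(T)/T_e\bigr]\Bigr\}.
\]
The first summand is $\le 0$ by $(1-x)\ln x\le 0$. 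Representing the other two as integrals $u(T)/T - s(T) = \int_{T_e}^T c_i(\tau)(1/T-1/\tau)\,d\tau$ and $s(T)-u(T)/T_e = \int_{T_e}^T c_i(\tau)(1/\tau-1/T_e)\,d\tau$, with heat capacity $c_i = du_i/d\tau > 0$ from the strict concavity of $S$ (\cref{pro. strict concavity}), a sign check in the two regimes $T\gtrless T_e$ shows both integrals are $\le 0$, making the entire IO contribution non-positive and vanishing iff $T=T_e$ and $x_i=1$.

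Summing the three non-positive pieces gives $\dot S_{\mathcal A}\le 0$ on the relevant domain. Equality forces the combination $B_{CR}^\top Y\zeta=0$, $T=T_e$, and $N_i=N_i^*$ on IO species, which via the definition of $\tilde\Gamma$ in \eqref{eq. invariant manifold} and \cref{prop. stoichiometric compatibility class} translates exactly to $\nabla S_{\mathcal A}\in\ker\tilde\Gamma^\top$. Combined with the strict positivity of $S_{\mathcal A}$ away from $(U^*,N^*)$ (from \cref{pro. strict concavity}), the classical Lyapunov theorem concludes the stability of $(U^*,N^*)$.
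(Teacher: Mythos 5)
Your proposal is correct and follows essentially the same route as the paper: the same decomposition of $\dot{S}_{\mathcal A}$ into CR, IO, and HE contributions, the same use of \cref{condition. 3} to kill the CR energy term, the pairwise inequality $(a-b)(e^a-e^b)\ge 0$ which is exactly the content of the paper's balanced-Laplacian lemma, and an IO analysis that regroups per species into the same three summands the paper obtains (its Laplacian part plus the mismatch vector $\xi_j(T)$ of \cref{pro. a negative vector}), with your integral representations $\int_{T_e}^{T}c_i(\tau)(1/T-1/\tau)\,\dd\tau$ being an equivalent packaging of the paper's derivative sign check. The only nits are cosmetic: the positivity $c_i>0$ is \cref{pro. positivity of heat capcities} rather than \cref{pro. strict concavity}, and the exponent vectors should carry $Y^{\top}$ rather than $Y$.
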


\begin{proof}
	The proof is shown in \cref{section proof of stability}.
	In short, the result follows from the fact that $B_{\rm CR} K_{\rm CR}(T)B_{\rm CR}^{\top}$ and $YB_{\rm IO} K_{\rm IO}B_{\rm IO}^{\top}$ are both Laplacian matrices.
\end{proof}

The above theorem shows the stability result for non-isothermal detailed balanced networks.
Compare to the corresponding result for isothermal networks (the first result of \cref{thm. isothermal detailed balanced}),
both results indicate the availability function ($G_{\mathcal A}(\cdot)$ or $S_{\mathcal A}(\cdot )$) to serve as Lyapunov functions, whose gradient is orthogonal to the stoichiometric(-like) subspace at each non-dissipative state.
Though more conditions are required in the above theorem than in \cref{thm. isothermal detailed balanced}, these conditions are easy to verify in practical systems, and, therefore, the two stability results for respectively isothermal detailed balanced networks and non-isothermal ones are quite similar in form. 
Moreover, in the isothermal case where $T\equiv T_e=T^*$, the availability function $S_{\mathcal A}(U,N)$  is identical to $\frac{G_{\mathcal A } (N)}{A_{v}R}$ (c.f. \eqref{eq. connection of the availability function and pseudo helmoholtz free energy}), and the above theorem becomes exactly the first result in \cref{thm. isothermal detailed balanced}.
Consequently, we can view \cref{thm. stability} as a non-isothermal analog to the first result of \cref{thm. isothermal detailed balanced}.

By the above theorem, we can further arrive at the detailed balancing of each positive equilibrium, which extends the second result of \cref{thm. isothermal detailed balanced}.

\begin{corollary}{\label{cor. detailed balancing of each equilibrium}}
	Let $(\mathcal{S},\mathcal{C}, \bar{\mathcal{R}}, \bar{\mathcal{K}}, \mathcal{G})$ be a non-isothermal detailed balanced network with a positive detailed balanced equilibrium $(U^*,N^*)$, where $U^*=(N^*)^{\top}u(T_e)$ if $\Delta \mathcal{U}_{\rm CR}\neq \mathbbold{0}^{r_{\rm CR}}$. 
	If all conditions in \cref{thm. stability} are satisfied, 
	then the following statements about a positive state $(U^{**},N^{**})$, where $U^{**}=(N^{**})^{\top}u(T_e)$ if $\Delta \mathcal{U}_{\rm CR}\neq \mathbbold{0}^{r_{\rm CR}}$, are equivalent. 
	\begin{enumerate}
		\item $(U^{**},N^{**})$ is an equilibrium.
		\item $
		\nabla{S}_{\mathcal A}(U^{**}, N^{**})
		\in \ker (\tilde{ \Gamma}^{\top})$.
		\item $(U^{**},N^{**})$ is a detailed balanced equilibrium.
	\end{enumerate}
\end{corollary}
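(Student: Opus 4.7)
The plan is to prove the cycle $(1)\Rightarrow(2)\Rightarrow(3)\Rightarrow(1)$. Since \cref{thm. stability} is already in hand, the two implications involving the Lyapunov gradient are short; only $(2)\Rightarrow(3)$ requires genuine work.

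For $(1)\Rightarrow(2)$, I would observe that at an equilibrium $(U^{**},N^{**})$ the trajectory is constant, so $\dot S_{\mathcal A}(U^{**},N^{**})=0$. The equality clause of \cref{thm. stability} then immediately forces $\nabla S_{\mathcal A}(U^{**},N^{**})\in\ker(\tilde\Gamma^{\top})$. For $(3)\Rightarrow(1)$, I would feed the two balance conditions of \cref{def. non-isothermal detailed balanced network} into the split dynamics \eqref{eq. dynamics of non-isothermal crn splited}: the contribution of the $j$-th reversible pair to $\dot N$ is $(y_{\pi_{2j-1}}-y_{\sigma_{2j-1}})(v_{2j-1}-v_{2j})=\mathbbold{0}_n$ by rate balance, while its contribution to $\dot U$ is $(\Delta\mathcal U_{2j-1}+\Delta\mathcal U_{2j})v_{2j-1}=0$ by energy balance combined with rate balance; any HE reaction additionally vanishes since $T^{**}=T^*=T_e$ by \eqref{eq. temperature at detailed balanced equilirbium}. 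Hence $(\dot U,\dot N)=(0,\mathbbold{0}_n)$, yielding (1).

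The substantive step is $(2)\Rightarrow(3)$. The kernel condition $\tilde{\Gamma}^{\top}\nabla S_{\mathcal A}(U^{**},N^{**})=0$ is a system of $r$ scalar orthogonality equations, one per reaction; the $j$-th equation reads
\[
\Delta \mathcal{U}_j \Big(\tfrac{1}{T^*}-\tfrac{1}{T^{**}}\Big)+(y_{\pi_j}-y_{\sigma_j})^{\top}\Big(\tfrac{\mu^{**}}{T^{**}}-\tfrac{\mu^*}{T^*}\Big)=0.
\]
I would analyze the three blocks $\bar{\mathcal{R}}_{\rm CR}$, $\bar{\mathcal{R}}_{\rm IO}$, and $\bar{\mathcal{R}}_{\rm HE}$ separately. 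Within each block, \cref{condition. 5} together with \cref{condition. 4} aligns the activated-state terms so that the exponents appearing in the forward and backward reactions of a reversible pair (cf.\ \eqref{eq. forward reaction in CR}--\eqref{eq. back reaction in CR} and \eqref{eq. forward reaction rate in IO}--\eqref{eq. back reaction rate in IO}) differ only through the stoichiometric vector $y_{\pi_{2j-1}}-y_{\sigma_{2j-1}}$; this vector being orthogonal to the thermodynamic driving force then forces $v_{2j-1}(U^{**},N^{**})=v_{2j}(U^{**},N^{**})$, giving rate balance. The energy-balance bullet is then automatic: within $\bar{\mathcal{R}}_{\rm CR}$ the sum $\Delta\mathcal U_{2j-1}+\Delta\mathcal U_{2j}$ is already zero by \cref{condition. 3}; reading the IO or HE rows of the kernel condition pins down $T^{**}=T_e$, which in turn makes $u(T_e)+(-u(T^{**}))=\mathbbold 0_n$ for each IO pair and $T_e-T^{**}=0$ for HE.

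The main obstacle will be the bookkeeping in $(2)\Rightarrow(3)$: one must carefully split cases according to whether the system is isolated, open, or has a heat-exchange reaction, and use the IO or HE rows \emph{first} to pin down $T^{**}=T_e$ before the CR rows deliver the chemical-potential balance. Without \cref{condition. 5} the exponents in a reversible pair would carry distinct activation contributions and the per-pair reduction would break, so this is precisely the place where the shared-activated-state assumption earns its keep.
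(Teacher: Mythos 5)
Your proposal is correct and follows essentially the same route as the paper: $(1)\Rightarrow(2)$ from the equality clause of \cref{thm. stability}, $(3)\Rightarrow(1)$ from the definition of a detailed balanced equilibrium, and $(2)\Rightarrow(3)$ by converting the kernel condition into per-pair orthogonality of $\mu^{**}/T^{**}-\mu^*/T^*$ against the stoichiometric vectors and invoking \eqref{eq. forward reaction in CR}--\eqref{eq. back reaction in CR} and \eqref{eq. forward reaction rate in IO}--\eqref{eq. back reaction rate in IO} (where \cref{condition. 4} and \cref{condition. 5} are indeed what make the forward and backward exponents differ only by the stoichiometric vector), with energy balance following from constancy on $\bar{\mathcal{R}}_{\rm CR}$ and from $T^{**}=T^*=T_e$ on the other blocks. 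The only slight imprecision is your written form of the $j$-th orthogonality equation, which corresponds to the first branch of \eqref{eq. invariant manifold}; when $\Delta\mathcal{U}$ is state-dependent the paper's $\tilde\Gamma$ instead contributes the separate row $\frac{1}{T^*}-\frac{1}{T^{**}}=0$, which pins down $T^{**}=T^*$ directly and only simplifies your argument.
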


\begin{proof}
	``1$\Rightarrow$2" follows immediately from \cref{thm. stability}, and ``3$\Rightarrow$1" follows from the definition.
	Therefore, we only need to show ``2$\Rightarrow$3".
	
	By the second statement in the above, the definition of $\tilde \Gamma$, and \cref{prop. stoichiometric compatibility class}, we can arrive at 
	\begin{equation}\label{eq. proof detailed balancing 1}
	{\mu^{**}}/{T^{**}}-{\mu^*}/{T^*} \in \ker(B^{\top}Y^{\top}).
	\end{equation}
	where $T^{**}$ and $\mu^{**}$ are the temperature and chemical potentials at $(U^{**}, N^{**})$.
	
	\textit{Balance of reaction rates:}
	For each pair of forward and backward reactions in $\bar{\mathcal{R}}_{\rm CR}$, the relation \eqref{eq. proof detailed balancing 1} suggest $(y_{\sigma_{2j-1}}-y_{\pi_{2j-1}})^{\top}\left(\frac{\mu^{**}}{T^{**}}-\frac{\mu^*}{T^*} \right)=0$ for $j=1,\dots, r_{\rm CR}/2$, and, therefore, 
	\begin{align*}
	v_{2j-1}(U^{**},N^{**})=& (K_{\rm CR}(T))_{j,j} \exp\left(y^{\top}_{\sigma_{2j-1}} \left( \frac{\mu^{**}}{T^{**}}-\frac{\mu^*}{T^*} \right) \right)  \\
	=&  (K_{\rm CR}(T))_{j,j} \exp\left(y^{\top}_{\pi_{2j-1}} \left( \frac{\mu^{**}}{T^{**}}-\frac{\mu^*}{T^*}\right) \right) \\
	=& v_{2j}(U^{**},N^{**}), & j=1,\dots, r_{\rm CR}/2
	\end{align*}
	where the first and last equality follows from \eqref{eq. forward reaction in CR} and \eqref{eq. back reaction in CR} respectively. Thus, we show balance of reaction rates for each pair of forward and backward reactions in  $\bar{\mathcal{R}}_{\rm CR}$.
	For each pair of reversible reactions in $\bar{\mathcal{R}}_{\rm IO}$ (if any), we can similarly prove the result by \eqref{eq. proof detailed balancing 1}, \eqref{eq. temperature at detailed balanced equilirbium}, \eqref{eq. forward reaction rate in IO}, and \eqref{eq. back reaction rate in IO}.
	For the reaction in $\bar{\mathcal{R}}_{\rm HE}$, the result is trivial, as its backward reaction is itself.
	
	\textit{Balance of energy changes:} For each pair of forward and backward reactions in $\bar{\mathcal{R}}_{\rm CR}$, balance of energy changes is trivial, as $\Delta \mathcal{U}_{j}$ ($j=1,\dots, r_{\rm CR}$) are constants. 
	For each pair in $\bar{\mathcal{R}}_{\rm IO}$ and $\bar{\mathcal{R}}_{\rm HE}$ (if any), balance of energy changes follows immediately from $T^{**}=T^*=T_e$ (c.f. \eqref{eq. temperature at detailed balanced equilirbium}) and the restrictions of $\Delta \mathcal{U}_{j}$ in $\bar{\mathcal{R}}_{\rm IO}$ and $\bar{\mathcal{R}}_{\rm HE}$. 
	
	Given the analysis above, we show ``2$\Rightarrow$3" which proves the result.
\end{proof}

\section{Asymptotic stability of detailed balanced network}{\label{section asymptotic stability}}
In this section, we show the asymptotic stability of a detailed balanced network to the unique equilibrium in each invariant manifold, which extend the third result of \cref{thm. isothermal detailed balanced}.
From \cref{thm. stability} and \cref{cor. detailed balancing of each equilibrium}, we can learn that the Lyapunov function $S_{\mathcal A }(U,N)$ dissipates at any positive state except the detailed balanced equilibrium. 
Therefore, by the second Lyapunov's method, showing the asymptotic stability of the system is equivalent to proving the existence and uniqueness of the detailed balanced equilibrium in each positive stoichiometric-like compatibility class.

\subsection{Legendre transformation and the proof scheme}

Geometrically, the fact that the gradient of the convex function $S_{\mathcal A }(U,N)$ is orthogonal to the stoichiometric-like subspace at a positive detailed balanced equilibrium (see \cref{cor. detailed balancing of each equilibrium}) suggests the equilibrium to be  a  minimum point of the Lyapunov function in the corresponding positive stoichiometric-like compatibility class.
Therefore, a straightforward idea to investigate asymptotic stability is to check whether the convex function $S_{\mathcal A }(U,N)$ has a unique minimum point in an invariant set $\mathcal{PS}(U^o,N^{o})$.
However, this idea is not easy to realize, because $S_{\mathcal A }(U,N)$ takes finite values at finite  boundary points (c.f. \eqref{eq. entropy function whole system}), and we cannot tell if $S_{\mathcal A }(U,N)$ has a minimum value point in such a case.


Instead, we study this problem by investigating the Legendre transformation of the Lyapunov function $S_{\mathcal A }(U,N)$.
The Legendre transformation switches the positions of a function's independent variables and derivatives, as a consequence of which it can preserve geometric relations between these variables while possibly generate a function that is unbounded at any boundary.
In what follows, we flesh out the details of the scheme of our analysis.

First, we denote the Legendre transformation of $S_{\mathcal A }(U,N)$ by
\begin{equation}{\label{eq. legendre transformation}}
\mathcal{L}(\beta, \gamma) \triangleq -S_{\mathcal A }(U,N) + \beta U + \gamma^{\top} N,
\end{equation}
where $\beta=\frac{\partial S_{\mathcal A }(U,N)}{\partial U}=\frac{1}{T^*}-\frac{1}{T}$,  $\gamma=\left(\frac{\partial S_{\mathcal A }(U,N)}{\partial N}\right)^{\top}=\frac{\mu}{T}-\frac{\mu^*}{T^*}$, and the definition domain is $\mathfrak{D}\triangleq(-\infty,\frac{1}{T^*})\otimes \mathbb{R}^{n}$.
By denoting $T^o$ and $\mu^o$ the temperature and chemical potentials at $(U^o,N^o)$, we further modify the Legendre function by shifting it at $(\beta^o,\gamma^o)\triangleq(\frac{1}{T^*}-\frac{1}{T^o},\frac{\mu^o}{T^o}-\frac{\mu^*}{T^*} )$ and obtain
\begin{equation*}
\mathcal{L}_{\mathcal{A}}(\beta,\gamma)= \mathcal{L}(\beta,\gamma)-(\beta-\beta^o)U^o-(\gamma-\gamma^o)^{\top}N^o-\mathcal{L}(\beta^o,\gamma^o).
\end{equation*}
Here, we term the modified function  as $\mathcal{L}_{\mathcal{A}}(\cdot)$, because it is also an availability function of the Legendre transformation with respect to $(\beta^o,\gamma^o)$.
Note that functions' convexities are preserved under both the Legendre transformation and the availability function transformation.
Therefore, $L(\beta,\gamma)$ and $L_{\mathcal A}(\beta,\gamma)$ are both strictly convex.  
In an invariant set $\mathcal{PS}(U^o,N^o)$ where $U^o=(N^o)^{\top}u(T_e)$ if $\Delta \mathcal{U}_{\rm CR}\neq \mathbbold{0}^{r_{\rm CR}}$, \cref{cor. detailed balancing of each equilibrium} suggests a positive state $(U^{**},N^{**})$ to be detailed balanced if and only if
\begin{equation*}
\left(
\begin{array}{c}
\beta^{**} \\ \gamma^{**}
\end{array}
\right)
\triangleq
\left(
\begin{array}{c}
\frac{1}{T^*}-\frac{1}{T^{**}} \\ \frac{\mu^{**}}{T^{**}}-\frac{\mu^*}{T^*}
\end{array}
\right)
\in 
\mathfrak{D} \cap \ker (\tilde{\Gamma}^{\top}) 
= \mathfrak{D} \cap  ({\rm Im}\tilde{\Gamma})^{\perp} 
\end{equation*}
and 
\begin{equation*}
\nabla \mathcal{L}_{\mathcal{A}}(\beta^{**},\gamma^{**})
=
\left(
\begin{array}{c}
U^{**}-U^o \\ N^{**}-N^{o}
\end{array}
\right) \in {\rm Im} \tilde{ \Gamma},
\end{equation*}
where $T^{**}$ and $\mu^{**}$ are respectively the temperature and chemical potentials at the state $(U^{**},N^{**})$.
From the above relations, we can conclude by the convexity of $\mathcal{L}_{\mathcal A}(\cdot)$ that $(\beta^{**},\gamma^{**})$ is a minimum point of $\mathcal{L}_{\mathcal A}(\beta,\gamma)$   in the region $\mathfrak{D} \cap  ({\rm Im}\tilde{\Gamma})^{\perp} $ if and only if $(U^{**},N^{**})$ is a detailed balanced equilibrium. Thus, showing the existence and uniqueness of a detailed balanced equilibrium is equivalent to proving the existence and uniqueness of a minimum point of $\mathcal{L}_{\mathcal A}(\beta,\gamma)$ in the region $\mathfrak{D} \cap  ({\rm Im}\tilde{\Gamma})^{\perp} $.
A straightforward way to verify it is to show $\mathcal{L}_{\mathcal A}(\beta,\gamma)$ to go to infinity at boundary points. 
Thanks to thermodynamic knowledge, especially the low temperature and high temperature limit of thermodynamic quantities, such analysis is feasible (see \cref{pro. unboundedness of LA}).

In the isothermal case, the existence and uniqueness of a detailed balanced equilibrium in a positive stoichiometric compatibility class $(N^o + {\rm Im}\Gamma)\cap \mathbb{R}^n_{>0}$ is shown by investigating an auxiliary function (c.f. \cite{feinberg1995existence})
\begin{equation*}
\psi(\gamma) =(N^*)^{\top} \Exp(\gamma)-\gamma^{\top} N^o.
\end{equation*}
Note that the Legendre transformation of the pseudo-Helmholtz free energy is 
\begin{equation*}
\mathcal{L}_{G_\mathcal{A}}(\gamma) \triangleq -G_{\mathcal A}(N)+\gamma^{\top} N = (N^*)^{\top} \Exp (\gamma) -N^*,
\end{equation*}
where $\gamma=\nabla G_{\mathcal A}(N)$,
and its shifted function at $\gamma=\Ln N^o-\Ln N^*$ is given by
\begin{align*}
\mathcal{L}_{\mathcal A, G_\mathcal{A}}(\gamma) 
&\triangleq \mathcal{L}_{G_\mathcal{A}}(\gamma)- \gamma^{\top}\nabla \mathcal{L}_{G_\mathcal{A}}(\gamma^o) -\mathcal{L}_{G_\mathcal{A}}(\gamma^o) \\
&= (N^*)^{\top} \Exp (\gamma) - \gamma^{\top}N^o+
(N^*)^{\top} \Exp (\gamma^o),
\end{align*} 
which only differs from the auxiliary function $\psi(\gamma)$ in a constant $(N^*)^{\top} \Exp (\gamma^o)$. 
Therefore, though not pointed out explicitly in \cite{feinberg1995existence}, the Legendre transformation also governs the asymptotic stability analysis of isothermal detailed balanced networks. 

\subsection{Asymptotic stability}\label{as_gao}
Following the proof scheme introduced in the previous subsection, we can arrive at the main results in this paper.

First, we point out that the variables $T$ and $N$ can be expressed as
\begin{align}
T&= \left(\frac{1}{T^*}-\beta \right)^{-1}, \label{eq. express of temperature by beta gamma}\\
N_i&=\exp\left(\frac{\gamma+\frac{\mu^*}{T^*}-\frac{g_i(T)}{T}}{A_v R} \right)
=Z_i(T)\exp\left(\frac{\gamma+\frac{\mu^*}{T^*}}{A_v R} \right)
, \quad i=1,\dots, n. {\label{eq. express of mass amount by beta gamma}}
\end{align}
Given the above expressions, we further use \cref{condition. 1} to show the unboundedness of $\mathcal{L}_{A}(\beta,\gamma)$ at any boundary. 

\begin{proposition}{\label{pro. unboundedness of LA}}
	Provided with \eqref{eq. at least linear growth of the Hamiltonian}, \cref{condition. 1}, and $U^o>(N^o)^{\top}u(0)$, the function $\mathcal{L}_{A}(\beta,\gamma)$ satisfy 
	\begin{align}
	\lim_{(\beta,\gamma)\to ({\beta^{b}}^{-},\gamma^{b})} \mathcal{L}_{\mathcal{A}} (\beta,\gamma)&= +\infty, &  \forall\left(\beta^{b},\gamma^{b}\right) \in \left\{\frac{1}{T^*}\right\} \otimes \mathbb{R}^{n}, \label{eq. unboundedness of LA at finite boundary}\\
	\lim_{\theta \to +\infty} \mathcal{L}_{\mathcal{A}}\left(\theta \bar\beta,\theta \bar \gamma\right)&= + \infty, & \forall (\bar \beta, \bar \gamma) \in \bigg((-\infty,0) \otimes \mathbb{R}^{n} \bigg) / (0,\mathbbold{0}_{n}).
	\label{eq. unboundedness of LA at infinite boundary} 
	\end{align}
\end{proposition}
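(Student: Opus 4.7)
My plan is to analyze $\mathcal{L}_{\mathcal A}(\beta,\gamma)$ by reverting to the thermodynamic coordinates $(T,N)$ through \eqref{eq. express of temperature by beta gamma}--\eqref{eq. express of mass amount by beta gamma} and reading off the growth of $\mathcal{L}_{\mathcal A}$ from the extreme-temperature behaviour of $g_i(T)$ and $u_i(T)$ supplied by \cref{condition. 1}. A preliminary reduction is useful: since
\begin{equation*}
\mathcal{L}_{\mathcal{A}}(\beta,\gamma) - \mathcal{L}(\beta,\gamma) = -(\beta-\beta^o)U^o - (\gamma-\gamma^o)^\top N^o - \mathcal{L}(\beta^o,\gamma^o)
\end{equation*}
is affine in $(\beta,\gamma)$, it stays bounded in \eqref{eq. unboundedness of LA at finite boundary} and grows at most linearly in $\theta$ in \eqref{eq. unboundedness of LA at infinite boundary}. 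It therefore suffices to show that $\mathcal{L}(\beta,\gamma)\to +\infty$ in the first limit, and that $\mathcal{L}(\theta\bar\beta,\theta\bar\gamma)/\theta \to +\infty$ along each admissible direction in the second.

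Next, I substitute the Legendre relations into $\mathcal{L}(\beta,\gamma) = \beta U + \gamma^\top N - S_{\mathcal A}(U,N)$, using the definition \eqref{eq. availability function of entropy} of $S_{\mathcal A}$. The reference-state contributions collapse into a single constant $C_0$, and combining the result with the fundamental relations from \cref{section thermodynamics} produces an expression for $\mathcal{L}$ that depends explicitly on $T$, on $g_i(T)$ and $u_i(T)$, and on the $N_i$ given by \eqref{eq. express of mass amount by beta gamma}. For the boundary case \eqref{eq. unboundedness of LA at finite boundary}, one has $T \to +\infty$ with $\gamma$ bounded, so each $N_i = Z_i(T)\exp((\gamma_i+\mu_i^*/T^*)/(A_v R))$ is the product of $Z_i(T)$ with a bounded factor; plugging the high-temperature asymptotics from \cref{condition. 1} into the closed form, the surviving $T$-dependent term diverges to $+\infty$. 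For the ray-to-infinity case \eqref{eq. unboundedness of LA at infinite boundary}, I split according to whether $\bar\beta<0$ (so $T \to 0^+$) or $\bar\beta=0$ (so $T$ is fixed and finite while $\gamma\to\infty$ in direction $\bar\gamma$). In the first subcase the dominant contribution is $\theta\bar\beta\,U$, whose super-linearity in $\theta$ is forced by the low-temperature growth of $u_i(T)$ and $g_i(T)$ from \cref{condition. 1}; in the second subcase each $N_i$ scales exponentially in $\theta\bar\gamma_i$, and the term $\theta\bar\gamma^\top N$ dominates outright because it grows faster than any linear function of $\theta$.

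I expect the trickiest point to be the mixed-sign subcase of Part 2, where $\bar\beta<0$ and the components of $\bar\gamma$ have different signs: some $N_i$ blow up while others vanish simultaneously, so $\theta\bar\gamma^\top N$ contributes non-trivially to both positive and negative sides, and an a-priori worry is that it could cancel the divergence coming from $\theta\bar\beta\,U$. The resolution I envisage is a species-wise partition based on $\operatorname{sign}(\bar\gamma_i)$ combined with the low-temperature bounds on $u_i(T)$ and $g_i(T)$ from \cref{condition. 1}, verifying that the positive contributions strictly dominate. Once that quantitative comparison is in place, both divergence claims follow directly from the closed form obtained in the second paragraph.
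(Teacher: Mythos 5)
Your reduction for the ray case \eqref{eq. unboundedness of LA at infinite boundary} has a genuine gap: you discard the affine correction $\mathcal{L}_{\mathcal A}-\mathcal{L}$ as ``at most linear in $\theta$'' and then set yourself the target of proving $\mathcal{L}(\theta\bar\beta,\theta\bar\gamma)/\theta\to+\infty$. That target is unreachable because the statement is false along some admissible directions. Indeed, using \eqref{eq. small S help} the Legendre transform collapses to $\mathcal{L}(\beta,\gamma)=A_vR\sum_{i=1}^{n}N_i+\mathrm{const}$, and for $\bar\beta<0$ one has $1/T\sim-\bar\beta\theta$, so by \eqref{eq. express of mass amount by beta gamma} and \eqref{eq. low temperature of free energy} each $N_i$ behaves like $\exp\bigl(\theta(u_i(0)\bar\beta+\bar\gamma_i)/(A_vR)\bigr)$ up to a bounded factor. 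If every $u_i(0)\bar\beta+\bar\gamma_i<0$, all $N_i\to0$, $\mathcal{L}$ stays \emph{bounded}, and the divergence of $\mathcal{L}_{\mathcal A}$ comes entirely from the linear terms you threw away --- specifically from $-\theta\bar\beta\left[U^o-(N^o)^{\top}u(0)\right]$, which is exactly where the hypothesis $U^o>(N^o)^{\top}u(0)$ (never invoked in your proposal) is needed. The paper avoids this trap by differentiating $\mathcal{L}_{\mathcal A}$ itself along the ray, writing the derivative as $-\bar\beta\left[U^o-(N^o)^{\top}u(0)\right]$ plus a sum of per-species terms, and showing each term is eventually nonnegative by a case split on the sign of $u_i(0)\bar\beta+\bar\gamma_i$ (not on $\operatorname{sign}(\bar\gamma_i)$); the derivative is then eventually bounded below by a positive constant, which gives linear --- not superlinear --- growth and is all that is true or needed.

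Two further inaccuracies. First, \cref{condition. 1} is a high-temperature statement, $\lim_{T\to\infty}Z_i(T)=\infty$, and is used only for the finite-boundary limit \eqref{eq. unboundedness of LA at finite boundary}, where $\beta\to(1/T^*)^-$ forces $T\to+\infty$ and hence $N_i\to\infty$; it says nothing about the low-temperature regime you repeatedly invoke it for, where the relevant fact is $\lim_{T\to0}g_i(T)=u_i(0)$, i.e.\ \eqref{eq. low temperature of free energy}, a consequence of L'Hospital rather than of \cref{condition. 1}. Second, in the subcase $\bar\beta<0$ the term $\theta\bar\beta U$ is \emph{negative} and $u_i(T)\to u_i(0)$ is finite as $T\to0^+$, so there is no ``super-linearity forced by low-temperature growth''; a negative term cannot drive $\mathcal{L}$ to $+\infty$. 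Your treatment of the finite boundary and of the $\bar\beta=0$ subcase does match the paper's argument, but the $\bar\beta<0$ subcase must be rebuilt around the derivative of $\mathcal{L}_{\mathcal A}$ and the strict inequality $U^o>(N^o)^{\top}u(0)$ rather than around the growth rate of $\mathcal{L}$.
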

\begin{proof}
	The proof may be found in \mbox{\cref{section proof of asymptotic stability}}.
\end{proof}

Then we prove the existence and uniqueness of a minimum point in $\mathfrak{D} \cap  \ker (\tilde{\Gamma}^{\top})$. 

\begin{proposition}{\label{pro. convexity and closeness of SC}}
	Provided with \eqref{eq. at least linear growth of the Hamiltonian}, \cref{condition. 1}, and $U^o>(N^o)^{\top}u(0)$, 
	the set $$\mathfrak{S}_{c} \triangleq \{(\beta,\gamma) \in \mathfrak{D} \cap  \ker (\tilde{\Gamma}^{\top})~|~ \mathcal{L}_{A}(\beta,\gamma)\leq \mathcal{L}_{A}(0,\mathbbold{0}_{n}) \}$$ is non-empty, convex, and closed.
\end{proposition}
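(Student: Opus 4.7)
My plan is to dispatch the three claims in order: non-emptiness and convexity are direct, while closedness is the only step that requires real work and will lean on \cref{pro. unboundedness of LA}.

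First, for non-emptiness, I would simply verify that $(0,\mathbbold{0}_{n})\in\mathfrak{S}_{c}$. The point lies in $\mathfrak{D}=(-\infty,1/T^{*})\otimes\mathbb{R}^{n}$ because $1/T^{*}>0$; it lies in the linear subspace $\ker(\tilde{\Gamma}^{\top})$ trivially; and the sublevel inequality $\mathcal{L}_{A}(0,\mathbbold{0}_{n})\le\mathcal{L}_{A}(0,\mathbbold{0}_{n})$ is automatic.

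For convexity, the set $\mathfrak{D}$ (a product of a half-line and $\mathbb{R}^{n}$) and $\ker(\tilde{\Gamma}^{\top})$ (a linear subspace) are manifestly convex. As already observed in the paper, $\mathcal{L}_{A}$ is strictly convex on $\mathfrak{D}$, being the Legendre availability transform of the strictly convex function $S_{\mathcal A}$ shifted by an affine term. The sublevel set $\{\mathcal{L}_{A}\le\mathcal{L}_{A}(0,\mathbbold{0}_{n})\}$ of a convex function is convex, so $\mathfrak{S}_{c}$ is a finite intersection of convex sets.

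The main task is closedness, which I will interpret as closedness in $\mathbb{R}^{n+1}$. Since $\ker(\tilde{\Gamma}^{\top})$ is already closed, it suffices to show that the sublevel set $E\triangleq\{(\beta,\gamma)\in\mathfrak{D}:\mathcal{L}_{A}(\beta,\gamma)\le\mathcal{L}_{A}(0,\mathbbold{0}_{n})\}$ is closed in $\mathbb{R}^{n+1}$. Given any sequence $(\beta_{k},\gamma_{k})\in E$ converging to $(\beta^{\infty},\gamma^{\infty})\in\mathbb{R}^{n+1}$, we have $\beta_{k}<1/T^{*}$, hence $\beta^{\infty}\le 1/T^{*}$. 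The crucial step is to rule out $\beta^{\infty}=1/T^{*}$: if it held, the finite-boundary divergence \eqref{eq. unboundedness of LA at finite boundary} in \cref{pro. unboundedness of LA}, applied with $(\beta^{b},\gamma^{b})=(1/T^{*},\gamma^{\infty})$, would force $\mathcal{L}_{A}(\beta_{k},\gamma_{k})\to+\infty$, contradicting the uniform upper bound $\mathcal{L}_{A}(0,\mathbbold{0}_{n})<+\infty$. Hence $\beta^{\infty}<1/T^{*}$, the limit lies in $\mathfrak{D}$, and continuity of $\mathcal{L}_{A}$ on $\mathfrak{D}$ passes the sublevel inequality to the limit, yielding $(\beta^{\infty},\gamma^{\infty})\in E$.

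The step I expect to be the main obstacle is precisely closedness, because $\mathfrak{D}$ is open in the $\beta$-direction so the bare intersection $\mathfrak{D}\cap\ker(\tilde{\Gamma}^{\top})$ is generally not closed in $\mathbb{R}^{n+1}$; closedness of $\mathfrak{S}_{c}$ holds only thanks to the blow-up of $\mathcal{L}_{A}$ at the finite boundary $\{\beta=1/T^{*}\}$ supplied by \cref{pro. unboundedness of LA}. The companion blow-up at infinity \eqref{eq. unboundedness of LA at infinite boundary} is not needed here for closedness as such, but I anticipate it will be the key ingredient in the next step to upgrade $\mathfrak{S}_{c}$ to a compact set, so that the strictly convex $\mathcal{L}_{A}$ attains a unique minimum on $\mathfrak{D}\cap\ker(\tilde{\Gamma}^{\top})$ and the existence and uniqueness of the detailed balanced equilibrium follows.
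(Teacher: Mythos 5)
Your proposal is correct and follows essentially the same route as the paper's proof: non-emptiness via the point $(0,\mathbbold{0}_{n})$, convexity from the convexity of $\mathcal{L}_{\mathcal{A}}$ and of the intersected sets, and closedness by taking a convergent sequence, invoking the finite-boundary blow-up \eqref{eq. unboundedness of LA at finite boundary} to keep the limit off the boundary $\{\beta=1/T^{*}\}$, and then passing the sublevel inequality to the limit by continuity. The only difference is that you spell out the boundary-exclusion argument in slightly more detail than the paper does.
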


\begin{proof}
	See details in \mbox{\cref{section proof of asymptotic stability}}.
\end{proof}

\begin{proposition}{\label{pro. compactness of SC}}
	Provided with \eqref{eq. at least linear growth of the Hamiltonian}, \cref{condition. 1} (in \cref{section thermodynamics}), and $U^o>(N^o)^{\top}u(0)$, 
	the set $\mathfrak{S}_{c}$ is compact.
\end{proposition}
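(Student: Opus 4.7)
The plan is to apply the Heine--Borel theorem: since \cref{pro. convexity and closeness of SC} already establishes that $\mathfrak{S}_{c}$ is nonempty and closed in $\mathbb{R}^{n+1}$, I only need to prove boundedness. I will argue by contradiction, showing that any unbounded sequence in $\mathfrak{S}_{c}$ produces a limit direction along which \cref{pro. unboundedness of LA} forces $\mathcal{L}_{\mathcal{A}}$ to blow up, contradicting the defining sublevel bound of $\mathfrak{S}_{c}$.

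Concretely, suppose towards contradiction that $\mathfrak{S}_{c}$ is unbounded, and pick $(\beta_{k},\gamma_{k})\in\mathfrak{S}_{c}$ with $\theta_{k} := \|(\beta_{k},\gamma_{k})\| \to \infty$. After extracting a subsequence by compactness of the unit sphere, the normalized vectors $(\bar\beta_{k},\bar\gamma_{k}):=(\beta_{k},\gamma_{k})/\theta_{k}$ converge to a unit vector $(\bar\beta,\bar\gamma)$. The domain constraint $\beta_{k}<1/T^{*}$ coming from $\mathfrak{D}$ gives $\bar\beta\leq 0$ after dividing by $\theta_{k}$ and passing to the limit, and $(\bar\beta,\bar\gamma)\neq(0,\mathbbold{0}_{n})$ since it has unit norm.

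To deploy the ray coercivity \eqref{eq. unboundedness of LA at infinite boundary}, I would first observe that $(0,\mathbbold{0}_{n})\in\mathfrak{S}_{c}$: it lies in $\mathfrak{D}\cap \ker(\tilde{\Gamma}^{\top})$ and meets the sublevel inequality with equality. By convexity of $\mathfrak{S}_{c}$ (also from \cref{pro. convexity and closeness of SC}), for every fixed $\theta>0$ and every sufficiently large $k$,
\begin{equation*}
\theta(\bar\beta_{k},\bar\gamma_{k}) \;=\; (\theta/\theta_{k})(\beta_{k},\gamma_{k})+(1-\theta/\theta_{k})(0,\mathbbold{0}_{n}) \;\in\; \mathfrak{S}_{c}.
\end{equation*}
Letting $k\to\infty$ and invoking closedness yields $\theta(\bar\beta,\bar\gamma)\in\mathfrak{S}_{c}$ for every $\theta>0$, so $\mathcal{L}_{\mathcal{A}}(\theta\bar\beta,\theta\bar\gamma)\leq\mathcal{L}_{\mathcal{A}}(0,\mathbbold{0}_{n})$ uniformly in $\theta$. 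Combined with \eqref{eq. unboundedness of LA at infinite boundary}, which forces $\mathcal{L}_{\mathcal{A}}(\theta\bar\beta,\theta\bar\gamma)\to+\infty$ as $\theta\to+\infty$, this is the sought contradiction.

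The delicate step is ensuring the limit direction $(\bar\beta,\bar\gamma)$ falls within the hypothesis set of the ray result in \cref{pro. unboundedness of LA}. If that statement is read strictly as requiring $\bar\beta<0$, the borderline case $\bar\beta=0$, $\bar\gamma\neq\mathbbold{0}_{n}$ needs separate treatment: along such a ray, the temperature stays fixed by \eqref{eq. express of temperature by beta gamma} while some $N_{i}$ grows exponentially in $\theta$ via \eqref{eq. express of mass amount by beta gamma}, and coercivity can be verified directly, e.g.\ by reducing to the isothermal identification \eqref{eq. connection of the availability function and pseudo helmoholtz free energy} and exploiting the standard coercivity of $G_{\mathcal{A}}$ along linear rays in $\gamma$. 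Up to this case split, the argument is the textbook recession-direction contradiction from convex analysis.
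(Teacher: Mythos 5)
Your proof is correct and takes essentially the same route as the paper: both exclude a recession ray of $\mathfrak{S}_{c}$ by combining the ray coercivity \eqref{eq. unboundedness of LA at infinite boundary} for directions with $\bar\beta\leq 0$ with the bound $\beta<1/T^{*}$ from $\mathfrak{D}$ for $\bar\beta>0$, the only difference being that you prove the recession-direction lemma by hand (normalizing an unbounded sequence and using convexity and closedness) while the paper cites it from Stoer--Witzgall. Your concern about the borderline direction $\bar\beta=0$, $\bar\gamma\neq\mathbbold{0}_{n}$ is moot, since the paper's proof of \cref{pro. unboundedness of LA} explicitly establishes \eqref{eq. unboundedness of LA at infinite boundary} in that case as well.
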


\begin{proof}
	See details in \mbox{\cref{section proof of asymptotic stability}}.
\end{proof}

\begin{proposition}{\label{pro. minimum point of LA}}
	Provided with \eqref{eq. at least linear growth of the Hamiltonian}, \cref{condition. 1}, and $U^o>(N^o)^{\top}u(0)$, 
	the shifted Legendre function $\mathcal{L}_{A}(\beta,\gamma)$ has a unique minimum in $\mathfrak{D} \cap  \ker (\tilde{\Gamma}^{\top})$.
\end{proposition}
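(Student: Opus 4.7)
The plan is to combine the three preceding propositions and the strict convexity of $\mathcal{L}_{\mathcal{A}}$ in a short Weierstrass-type argument. First I would note that by \cref{pro. convexity and closeness of SC} the sublevel set $\mathfrak{S}_{c}$ is non-empty (in particular it contains $(0,\mathbbold{0}_{n})$, since $\mathcal{L}_{\mathcal{A}}(0,\mathbbold{0}_{n})=0$ lies in $\mathfrak{D}\cap \ker(\tilde{\Gamma}^{\top})$), and by \cref{pro. compactness of SC} it is compact. Since $\mathcal{L}_{\mathcal{A}}$ is continuous on $\mathfrak{D}$ (it is the difference of a smooth Legendre transform and an affine map), the restriction of $\mathcal{L}_{\mathcal{A}}$ to $\mathfrak{S}_{c}$ attains a minimum at some $(\beta^{\star},\gamma^{\star})\in \mathfrak{S}_{c}$.

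Second, I would argue that this minimum on $\mathfrak{S}_{c}$ is actually a global minimum on $\mathfrak{D}\cap \ker(\tilde{\Gamma}^{\top})$. Indeed, for any $(\beta,\gamma)\in \mathfrak{D}\cap \ker(\tilde{\Gamma}^{\top})$ with $(\beta,\gamma)\notin \mathfrak{S}_{c}$, the defining inequality of $\mathfrak{S}_{c}$ is violated, so $\mathcal{L}_{\mathcal{A}}(\beta,\gamma)>\mathcal{L}_{\mathcal{A}}(0,\mathbbold{0}_{n})\geq \mathcal{L}_{\mathcal{A}}(\beta^{\star},\gamma^{\star})$, where the last inequality uses $(0,\mathbbold{0}_{n})\in \mathfrak{S}_{c}$ together with minimality of $(\beta^{\star},\gamma^{\star})$ on $\mathfrak{S}_{c}$. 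Hence $(\beta^{\star},\gamma^{\star})$ minimizes $\mathcal{L}_{\mathcal{A}}$ over all of $\mathfrak{D}\cap \ker(\tilde{\Gamma}^{\top})$.

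Third, I would establish uniqueness from strict convexity. The Legendre transform $\mathcal{L}(\beta,\gamma)$ of the strictly concave function $S(U,N)$ (see \cref{pro. strict concavity}) is strictly convex on $\mathfrak{D}$, and subtracting the affine term $(\beta-\beta^{o})U^{o}+(\gamma-\gamma^{o})^{\top}N^{o}+\mathcal{L}(\beta^{o},\gamma^{o})$ preserves strict convexity, so $\mathcal{L}_{\mathcal{A}}$ is strictly convex on $\mathfrak{D}$. Since $\mathfrak{D}\cap \ker(\tilde{\Gamma}^{\top})$ is a convex set (intersection of the convex half-space $\mathfrak{D}$ with a linear subspace), a strictly convex function can have at most one minimum on it. Thus $(\beta^{\star},\gamma^{\star})$ is the unique minimum.

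The only non-routine ingredients are the compactness of $\mathfrak{S}_{c}$ and the strict convexity of $\mathcal{L}_{\mathcal{A}}$; the former has already been handed to us by \cref{pro. compactness of SC}, while the latter reduces to the strict concavity of the entropy that is built into the thermodynamic setup. Consequently I do not anticipate a genuine obstacle in this particular proposition — the difficulty has been absorbed into \cref{pro. unboundedness of LA}, \cref{pro. convexity and closeness of SC}, and \cref{pro. compactness of SC}, where the low- and high-temperature limits of $g(T)$ and $u(T)$ from \cref{condition. 1} ensure that $\mathcal{L}_{\mathcal{A}}$ blows up at every boundary of $\mathfrak{D}$ and along every ray going to infinity.
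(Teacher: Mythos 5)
Your proposal is correct and follows essentially the same route as the paper: a Weierstrass argument on the compact sublevel set $\mathfrak{S}_{c}$, the observation that values outside $\mathfrak{S}_{c}$ exceed $\mathcal{L}_{\mathcal{A}}(0,\mathbbold{0}_{n})$, and uniqueness from strict convexity. The only quibble is your parenthetical claim that $\mathcal{L}_{\mathcal{A}}(0,\mathbbold{0}_{n})=0$ --- the shift is performed at $(\beta^{o},\gamma^{o})$, not at the origin, so this value need not vanish; but this is harmless since $(0,\mathbbold{0}_{n})\in\mathfrak{S}_{c}$ holds tautologically from the definition of the sublevel set.
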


\begin{proof}
	See details in \mbox{\cref{section proof of asymptotic stability}}.
\end{proof}

Finally, we show the existence and uniqueness of a detailed balanced equilibrium in each positive stoichiometric compatibility class and the asymptotic stability of the equilibrium. 

\begin{theorem}{\label{thm. asymptotic stability}}
	Let $(\mathcal{S},\mathcal{C}, \bar{\mathcal{R}}, \bar{\mathcal{K}}, \mathcal{G})$ be a non-isothermal detailed balanced network with a positive detailed balanced equilibrium $(U^*,N^*)$, where $U^*=(N^*)^{\top}u(T_e)$ if $\Delta \mathcal{U}_{\rm CR}\neq \mathbbold{0}^{r_{\rm CR}}$. 
	Provided with \eqref{eq. at least linear growth of the Hamiltonian} and Conditions 1--5, there exists a unique positive detailed balanced equilibrium in each positive stoichiometric-like compatibility class $\mathcal{PS}(U^o,N^o)$, where 
	\begin{itemize}
		\item $U^o>(N^o)^{\top}u(0)$, 
		\item  $U^o=(N^o)^{\top}u(T_e)$ if $\Delta \mathcal{U}_{\rm CR}\neq \mathbbold{0}^{r_{\rm CR}}$.
	\end{itemize}
	Moreover, this equilibrium is locally asymptotically stable.
\end{theorem}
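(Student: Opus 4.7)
The plan is to combine the Legendre-transformation machinery built up in \cref{pro. unboundedness of LA}--\cref{pro. minimum point of LA} with the dissipativeness statement of \cref{thm. stability} and the equilibrium characterization of \cref{cor. detailed balancing of each equilibrium}. The existence and uniqueness part becomes a statement about a unique minimizer of $\mathcal{L}_{\mathcal A}$, and local asymptotic stability then drops out of a standard Lyapunov/LaSalle argument.

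For existence, I would start from \cref{pro. minimum point of LA}, which supplies a unique minimizer $(\beta^{**},\gamma^{**})$ of the shifted Legendre function $\mathcal{L}_{\mathcal A}$ over the relatively open convex set $\mathfrak{D}\cap\ker(\tilde\Gamma^\top)$. The first-order optimality condition on this affine subspace reads $\nabla\mathcal{L}_{\mathcal A}(\beta^{**},\gamma^{**})\perp\ker(\tilde\Gamma^\top)$, i.e.\ $\nabla\mathcal{L}_{\mathcal A}(\beta^{**},\gamma^{**})\in{\rm Im}\,\tilde\Gamma$. By the standard Legendre identity together with the shift at $(\beta^o,\gamma^o)$, this gradient equals $(U^{**}-U^o,\,N^{**}-N^o)^\top$, where $(U^{**},N^{**})$ is obtained by inverting the relations \eqref{eq. express of temperature by beta gamma}--\eqref{eq. express of mass amount by beta gamma}; hence $(U^{**},N^{**})\in\mathcal{PS}(U^o,N^o)$. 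Since $(\beta^{**},\gamma^{**})=\nabla S_{\mathcal A}(U^{**},N^{**})\in\ker(\tilde\Gamma^\top)$ by construction, \cref{cor. detailed balancing of each equilibrium} immediately promotes $(U^{**},N^{**})$ to a positive detailed balanced equilibrium inside $\mathcal{PS}(U^o,N^o)$. For uniqueness, any competing detailed balanced equilibrium $(\hat U,\hat N)\in\mathcal{PS}(U^o,N^o)$ would, via the diffeomorphism $\nabla S_{\mathcal A}$ (strict convexity from \cref{pro. strict concavity}), produce another critical point of $\mathcal{L}_{\mathcal A}$ in $\mathfrak{D}\cap\ker(\tilde\Gamma^\top)$; strict convexity forces it to agree with $(\beta^{**},\gamma^{**})$, whence $(\hat U,\hat N)=(U^{**},N^{**})$.

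For local asymptotic stability, I would re-center the availability function at the detailed balanced equilibrium $(U^{**},N^{**})$ just produced (noting that $U^{**}=(N^{**})^\top u(T_e)$ automatically under \cref{condition. 3} whenever $\Delta\mathcal U_{\rm CR}\neq\mathbbold{0}^{r_{\rm CR}}$, since $T^{**}=T_e$ by \eqref{eq. temperature at detailed balanced equilirbium} and $(U^o,N^o)$ is assumed to satisfy the same constraint). \cref{thm. stability} then yields that $S_{\mathcal A}$ is positive definite on $\mathcal{PS}(U^o,N^o)$ relative to $(U^{**},N^{**})$ and non-increasing along trajectories, with $\dot S_{\mathcal A}=0$ iff $\nabla S_{\mathcal A}\in\ker(\tilde\Gamma^\top)$. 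By \cref{cor. detailed balancing of each equilibrium}, the set $\{\dot S_{\mathcal A}=0\}\cap\mathcal{PS}(U^o,N^o)$ equals the set of positive detailed balanced equilibria in that compatibility class, which by the uniqueness statement just proved is the singleton $\{(U^{**},N^{**})\}$. LaSalle's invariance principle, applied on a sublevel set of $S_{\mathcal A}$ intersected with $\mathcal{PS}(U^o,N^o)$, then delivers local asymptotic stability.

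The main obstacle I anticipate is bookkeeping rather than mathematics: carefully aligning the shift point $(\beta^o,\gamma^o)$ of the Legendre availability (which is tied to the \emph{initial} data $(U^o,N^o)$) with the shift point $(U^{**},N^{**})$ of the primal availability $S_{\mathcal A}$ (which is the \emph{equilibrium}), and verifying that inverting \eqref{eq. express of temperature by beta gamma}--\eqref{eq. express of mass amount by beta gamma} at $(\beta^{**},\gamma^{**})$ indeed lands inside the positive stoichiometric-like compatibility class rather than merely in the ambient positive cone. The constraint $U^o>(N^o)^\top u(0)$, which guarantees strict positivity of the temperature, is what ultimately keeps the minimizer in the interior $\mathfrak{D}$ and prevents the argument from degenerating at the boundary.
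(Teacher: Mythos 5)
Your proposal follows essentially the same route as the paper's proof: existence via the first-order optimality condition for the unique minimizer of $\mathcal{L}_{\mathcal A}$ on $\mathfrak{D}\cap\ker(\tilde\Gamma^{\top})$ combined with \cref{cor. detailed balancing of each equilibrium}, uniqueness via strict convexity (the paper argues directly on $S_{\mathcal A}$ rather than pulling back through the Legendre transform, but the two are interchangeable), and local asymptotic stability via the dissipativeness of $S_{\mathcal A}$ from \cref{thm. stability} together with the second Lyapunov method. The bookkeeping points you flag (the shift at $(\beta^o,\gamma^o)$, the constraint $U^{**}=(N^{**})^{\top}u(T_e)$ when $\Delta\mathcal{U}_{\rm CR}\neq\mathbbold{0}^{r_{\rm CR}}$, and interiority of the minimizer) are exactly the ones the paper's proof handles.
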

\begin{proof}
	The detailed proof is shown in \cref{section proof of asymptotic stability}.
\end{proof}

\subsection{Some future remarks}

\cref{thm. asymptotic stability} extends the third result in \cref{thm. isothermal detailed balanced}, and, thus, non-isothermal detailed balanced networks inherit all results in \cref{thm. isothermal detailed balanced}: the dissipativeness (\cref{thm. stability}), the detailed balancing of each equilibrium (\cref{cor. detailed balancing of each equilibrium}), the existence and uniqueness of the equilibrium (\cref{thm. asymptotic stability}), and the asymptotic stability (\cref{thm. asymptotic stability}). 

From the previous discussions, we can observe \cref{condition. 1} (in \cref{section thermodynamics}), which suggests the unboundedness of each partition function $Z_i(\cdot)$, plays an important role in showing \cref{thm. asymptotic stability}  by guaranteeing the unboundedness of the modified Legendre transformation $\mathcal{L}_{A}(\beta,\gamma)$ at any boundary.
Note that this condition is absent in any result prior to this section; 
therefore, in this paper, it serves specifically for the asymptotic stability problem. 
Moreover, one can also observe that all propositions concerning $\mathcal{L}_{A}(\beta,\gamma)$ in \cref{section proof of asymptotic stability} are independent of the specific structure of a non-isothermal CRN and purely thermodynamic knowledge, which implies that thermodynamics can help researchers gain insights into CRNT. 

Also, we need to emphasize that the asymptotic stability shown in \cref{thm. asymptotic stability} is a local result rather than a global one. 
It is caused by the failure of the second Lyapunov method  to preclude boundary $\omega$-limit points provided that the Lyapunov function is finite at finite boundaries and can be non-dissipative on them.
It still remains an open problem whether a non-isothermal detailed balanced network is globally asymptotically stable or not.
To further investigate this problem, researchers are required to study the persistence of non-isothermal detailed balanced networks, i.e., to check whether or not boundary $\omega$-limit points exist.
Some methodologies \cite{gopalkrishnan2014geometric,craciun2015toric,anderson2008global,anderson2011proof} that succeed in investigating the persistence of isothermal CRNs can be helpful to this problem.

Notwithstanding the above limitations, our current modeling method still has enough advantages compared to the existing one, the port-Hamiltonian approach \mbox{\cite{wang2016irreversible,wang2018port}}. The latter models non-isothermal detailed balanced chemical reaction systems in the form of port-Hamiltonian systems (see \mbox{\cite[(38)]{wang2018port}}) and captures stability through the passivity structure of the Hamiltonian systems, but fails to discuss the asymptotic stability of a detailed balanced equilibrium. However, the current approach yields deeper results in understanding the dynamics of non-isothermal reaction network systems and about the stable behavior of detailed balanced equilibria. As stated in \mbox{\cref{rem_3.2}}, it utilizes the transition state theory to characterize the kinetics that could provide a much clearer interpretation to reaction kinetics and a close connection between the kinetics and thermodynamic properties.
Based on these, a clear statement about the invariant manifold and the geometry of equilibrium sets may be obtained, which together with thermodynamic knowledge, particularly with the high temperature and low temperature limit, can serve for reaching the asymptotic stability of each detailed balanced equilibrium.
In addition, the current method could model three parts of an open reaction network system, i.e., non-isothermal chemical reaction networks, boundary inflow and outflow, and heat exchange, in the same framework, which also greatly benefit the analysis of dynamical behaviors. However, the work of Wang et. al. \mbox{\cite{wang2018port}} cannot model these three parts simultaneously using the same approach. This fact also suggests that our current modeling approach is suitable for describing a broader class of reaction systems. Naturally, it needs to be pointed out that in the special case of isolated reaction network systems the mentioned two modeling methods yield the same dynamic equations.

\section{Conclusion}{\label{section discussion}}
In this paper, we provide a graphic formulation for modeling non-isothermal chemical reaction systems based on the classical CRNT and apply it to analyzing dynamic properties of detailed balanced network systems.
To model thermal effects, we first extend the isothermal CRN by adding two parameters to each (reaction) edge depicting respectively the instantaneous energy change and the transition state theory.
The newly established networks are termed as non-isothermal CRNs and shown to be efficient in modeling a broad class of non-isothermal chemical reaction systems.
Moreover, we introduce detailed balanced networks for non-isothermal CRNs and provide them with a compact dynamic formula that exhibits at the same time the network topology and thermodynamic information.
With this compact formula, the Legendre transformation, and some mild conditions, we show non-isothermal detailed balanced network systems to admit some fundamental properties,
\begin{enumerate}
	\item the dissipativeness with respect to $S_{\mathcal A}(U,N)$ (\cref{thm. stability}), 
	\item the detailed balancing of each equilibrium (\cref{cor. detailed balancing of each equilibrium}),
	\item the existence, uniqueness, and asymptotic stability of the detailed balanced equilibrium (\cref{thm. asymptotic stability}),
\end{enumerate}
which are well consistent with results in isothermal detailed balanced networks.
In contrast with the corresponding results for isothermal cases, the above properties require more conditions for non-isothermal detailed balanced networks.
Specifically, \eqref{eq. at least linear growth of the Hamiltonian} ensures the well posedness of thermodynamic quantities, \cref{condition. 2}, \cref{condition. 3} and \cref{condition. 4} enable the non-isothermal CRNs to model a broad class of practical reaction systems, \cref{condition. 5} helps to construct a compacted formula for non-isothermal detailed balanced networks, and \cref{condition. 1} is used to prove the asymptotic stability by guaranteeing the modified Legendre transformation to be unbounded at any boundary.
In a practical system, these conditions are usually easy to verify and, therefore, not too restrictive.
In general, the analysis and results of this work, especially thermodynamic interpretations, provide insights into the research of non-isothermal chemical reaction systems. 

There are numerous topics that we are pursuing, related to non-isothermal CRNT.
First, for non-isothermal detailed balanced networks, it remains an open problem to extend the established locally asymptotic stability to a global result.
Some methods that succeed in investigating the persistence of isothermal CRNs, such as strongly endotactic networks \cite{gopalkrishnan2014geometric}, the tier structure \cite{anderson2018tier,anderson2011proof}, semi-locking sets \cite{anderson2008global}, and the toric differential inclusion \cite{gopalkrishnan2014geometric,craciun2015toric} can be helpful to this problem.
Second, we will also investigate the connection between the network topology and unique/multi-stationarity, which can be applied to chemical engineering designs to preclude undesired stationary states. 
We suspect that the deficiency theory can still work for the equilibrium analysis of non-isothermal CRNs after some necessary modifications.
Third, since oscillations are common phenomena in chemical engineering, it is also interesting to investigate what causes oscillations in non-isothermal CRNs.
Forth, we are also exploring the stochastic behaviors of non-isothermal CRNs when systems' scales are small and trying to extract thermodynamic knowledge from them.
The recent work \cite{fang2019thermodynamic} is a good starting point for this problem.
Finally, though most relevant topics of non-isothermal CRNs are in the field of chemical engineering, there are still some applications in biological studies. A most direct application is the network used by Wang et. al.\mbox{\cite{wang2018port}} that takes place in a very common protein synthesis circuit in the cell of \textit{E. Coli} in the large intestine of human beings. The following three reactions are included in this network: 

LuxR+AHL$\rightleftharpoons$LuxR-AHL,

2(LuxR-AHL)$\rightleftharpoons$(LuxR-AHL)$_2$,

(LuxR-AHL)$_2$+DNA$\rightleftharpoons$ DNA-(LuxR-AHL)$_2$.

\noindent More details about the network may be found in that paper.
Also, it can be used to design control laws for a cell culturing system, where the temperature needs to be accurately regulated.

\appendix

\section{Notations and terminologies}{\label{section notations}}
Here we introduce some notations and terminologies used in this paper.

\noindent{\textbf{Mathematical Notation:}}\\
\rule[1ex]{\columnwidth}{0.8pt}
\begin{description}
	\item[\hspace{+0.8em}{$\mathbb{R}^n,\mathbb{R}^n_{\geq 0},\mathbb{R}^n_{\textgreater 0}$}]: $n$-dimensional real space, nonnegative and positive real space, respectively.
	\item[\hspace{+0.8em}{$x^{v_{\cdot i}}$}]: $x^{v_{\cdot i}}=\prod_{j=1}^{d}x_{j}^{v_{ji}}$, where $x,v_{\cdot i}\in\mathbb{R}^{d}$ and $0^{0}$ is defined to be $1$.
	\item[\hspace{+0.8em}{$\mathrm{Exp}(x)$}]: $\Exp(x)=\left(\exp(x_{1}),\exp(x_{2}),\cdots,\exp({x}_{d})\right)^{\top}$ where $x\in\mathbb{R}^{d}_{>{0}}$.
	\item[\hspace{+0.8em}{$\mathrm{Ln}(x)$}]: $\Ln(x)=\left(\ln{x_{1}},\ln{x_{2}},\cdots,\ln{{x}_{d}}\right)^{\top}$, where $x\in\mathbb{R}^{d}_{>{0}}$.
	\item[\hspace{+0.8em}{$\otimes$}]: Cartesian product.
	\item[\hspace{+0.8em}{$\mathcal{I}_{n}$}]: {an $n\times n$ identity matrix.} 
	\item[\hspace{+0.8em}{$\mathbbold{0}_{n}$}]: {an $n$-dimensional vector with every entry to be zero.} 
	\item[\hspace{+0.8em}{$\mathbbold{1}_{n}$}]: {an $n$-dimensional vector with every entry to be one.}
	\item[\hspace{+0.8em}{$\delta_{i}$}]: {an $n$-dimensional vector with the $i$-th component being 1 and the others zeros.}
	\item[\hspace{+0.8em}{$\chi(\cdot)$}]: {the indicator function.}
\end{description}
\rule[1ex]{\columnwidth}{0.8pt}

\noindent{\textbf{Physics terminologies:}}\\
\rule[1ex]{\columnwidth}{0.8pt}
\begin{description}
	\item[Activated state:] the state corresponding to the highest potential energy along this reaction coordinate.
	\item[Callen's first postulate:] a well mixed system can be fully described by internal energy, system volume, and mass amounts of involved substances.
	\item[Enthalpy:] the sum of internal energy of a thermodynamic system and the work required to achieve its pressure and volume.
	\item[First law of thermodynamics:] known as Law of Conservation of Energy, that states the total energy of an isolated system is constant, and cannot be created or destroyed. 
	\item[Gibbs energy of activation:] the energy which must be provided to a chemical system with potential reactants to result in a chemical reaction.
	\item[Internal energy:] the energy contained within the thermodynamic system that can be increased by introduction of matter, by heat, or by doing thermodynamic work on the system.
	\item[Isolated system:] a thermodynamic system that cannot exchange either energy or matter with the environment.
	\item[Isothermal system:] a system with the temperature remaining constant. 
	\item[Open system:] a thermodynamic system that has external interactions taking the form of energy or material transfers into or out of the system boundary.
	\item[Port-Hamiltonian system:] Hamiltonian system that has input and out ports and satisfies Dirac structures.
	\item[Process system:] a system that transforms raw material and energy into products. 
	\item[Second law of thermodynamics:] the entropy of any isolated system always increases. 
\end{description}
\rule[1ex]{\columnwidth}{0.8pt}

\section{Related basic knowledge}\label{section appendix B}
In this appendix, we present some basic knowledge on thermodynamics and isothermal CRNT.
\subsection{Thermodynamics}{\label{section thermodynamics}}
We first review some basic knowledge on thermodynamics. Usually, there are two perspectives to study thermodynamics, the reversible pathway argument originated from Carnot \cite{carnotreflexions} and the statistical mechanics' viewpoint introduced by Gibbs\cite{gibbs1879equilibrium}, Maxwell \cite{maxwell1860ii,maxwell1860v}, and Boltzmann\cite{boltzmann2012lectures}.
In this part, we review the thermodynamics from the viewpoint of statistical mechanics, mainly referring to Sethna's textbook \cite{sethna2006statistical}. 
Notably, we assume, in this paper, the system to have a fixed volume and, for simplicity, the volume to be unitary.

\subsubsection{Thermodynamic potentials}
First, we consider a closed system where $n$ kinds of ideal fluids are well mixed.
In such a system, We denote amounts of these substances as $N_1, N_2, \dots, N_n$ (with the unit of mole) and the temperature as $T$ (with the unit of Kelvin, $K$).
According to the canonical ensemble (c.f. \cite{sethna2006statistical}), each molecule of $i$-th substance is behaving randomly with respect to the Maxwell-Boltzmann distribution 
\begin{equation}{\label{eq. Boltzmann distribution}}
\mathbb{P}(d \omega_{i})=\frac{e^{-\frac{H_{i}(\omega_i)}{RT}}}
{Z_i(T) } d \omega_{i},
\end{equation}
where $\omega_{i}$ is the microscopic state of the molecule taking value in a metric space $\Omega_{i}$, the function $H_{i}(\cdot)$ is a non-negative function called the Hamiltonian, the parameter $R$ is the Boltzmann constant equaling to $1.38 \times10^{-23} J/K$, and $Z_{i}(T)$ is the normalization factor (or called the partition function of the single molecule system) defined by 
\begin{equation}{\label{eq. the partition function}}
Z_{i}(T)\triangleq\int_{\Omega_i} \text{e}^{-\frac{H_{i}(\omega_i)}{RT}} \dd \omega_i.
\end{equation}
Here, we further assume the Hamiltonian, $H_{i}(\cdot)$, to have at least linear growth, i.e.
\begin{equation}{\label{eq. at least linear growth of the Hamiltonian}}
H_i(\omega_i) \geq \mathfrak{C}on \cdot \|\omega_i\| \text{~for all $i=1,\dots,n$ and some positive constant $\mathfrak{C}on$,}
\end{equation}
so that the integral in \eqref{eq. the partition function} is convergent.
The Maxwell-Boltzmann distribution \eqref{eq. Boltzmann distribution} works for both classical mechanics and quantum mechanics.
In classical mechanics, $\omega_{i}$ is the vector of positions and momentums of the molecule with values taken in a high dimensional Euclidean space (i.e. $\Omega_i=\mathbb{R}^d$ where $d$ is the dimension of the space).
Whereas in quantum mechanics, $\omega_{i}$ is a wave function representing the probability distribution of positions and momentums of the molecule, and the topology of the space $\Omega_i$  is much more complicated.

According to the Maxwell-Boltzmann distribution, each molecule has the average energy (with the unit of joule per mole $J/ \text{mol}$)
\begin{equation}{\label{eq. energies of individual molecules}}
u_{i}(T)\triangleq A_v \times
\frac{\int_{\Omega} H_{i}(\omega_{i}) \exp\{-\frac{H_{i}(\omega)}{RT}\} \dd \omega_{i}}{ Z_{i}(T)}, 
\end{equation}
where the integral is convergent due to \eqref{eq. at least linear growth of the Hamiltonian},  
and the whole system has the average internal energy (with the unit of $J$)
\begin{equation}{\label{eq. internal energy of the whole system}}
U(T,N)\triangleq \sum_{i=1}^{n}  N_{i} u_{i}(T) = N^{\top} u(T),
\end{equation}
where $A_v$ is the Avogadro number (a constant), $N=(N_1,N_2,...,N_n)^{\top}$, and $u(T)\triangleq ( u_{1}(T),u_{2}(T),\cdots, u_{n}(T))^{\top}$. 
The system's internal energy is the sum of the energy of each molecule in the system and, therefore, fluctuates over time due to the randomness of each molecule.
However, by the law of large numbers, the fluctuation of the internal energy is extremely small compared to the average of the internal energy, and, therefore, from the viewpoint of statistics, the internal energy can be expressed by $U(T,N)$, a deterministic function of the temperature and mass amounts. 

In statistic mechanics, the partition function is another important concept worth reviewing.
For each molecular system, the partition function is defined by \eqref{eq. the partition function}. 
For the whole system, the partition function is established based on the ones of molecular systems and has the expression 
\begin{equation*}
Z(T,N)\triangleq \prod_{i=1}^{n} \frac{\left(Z_{i}(T)\right)^{A_v N_i}}{(A_v N_i) !},
\end{equation*}
where the factorial term in the denominator is due to the indistinguishability of particles of each substance \cite{sethna2006statistical}.  
Particularly, the functions $Z_{i}(T)$ and $Z(T, N)$ are both non-descending with respect to the temperature component.
\begin{proposition}{\label{pro. partition function's monotone}}
	If the inequality \eqref{eq. at least linear growth of the Hamiltonian} holds, then Partition functions, $Z_{i}(T)$ and $Z(T,N)$, are non-decreasing with respect to the temperature component.
\end{proposition}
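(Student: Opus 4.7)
The plan is to dispatch $Z_i(T)$ first and then obtain the monotonicity of $Z(T,N)$ as a simple consequence. For the single-molecule partition function, I would exploit the non-negativity of the Hamiltonian directly rather than differentiating under the integral sign. Specifically, for any $0<T_1\le T_2$ and any microstate $\omega_i\in\Omega_i$, the inequality $H_i(\omega_i)\ge 0$ gives
\begin{equation*}
\frac{H_i(\omega_i)}{RT_1}\ \ge\ \frac{H_i(\omega_i)}{RT_2}\ \ge\ 0,
\end{equation*}
so the map $T\mapsto e^{-H_i(\omega_i)/(RT)}$ is non-decreasing pointwise on $\Omega_i$. Integrating the pointwise inequality over $\Omega_i$ yields $Z_i(T_1)\le Z_i(T_2)$, which is the required monotonicity. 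The linear-growth assumption \eqref{eq. at least linear growth of the Hamiltonian} enters only to guarantee that both sides of the inequality are finite, so that the comparison is a genuine inequality rather than a degenerate $\infty\le\infty$.

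For the whole-system partition function, I would observe first that $Z_i(T)>0$ for every $T>0$ because the integrand defining $Z_i(T)$ is strictly positive on $\Omega_i$. Writing
\begin{equation*}
Z(T,N)\ =\ \prod_{i=1}^{n}\frac{(Z_i(T))^{A_vN_i}}{(A_vN_i)!},
\end{equation*}
each factor $(Z_i(T))^{A_vN_i}/(A_vN_i)!$ is a positive, non-decreasing function of $T$, since the exponent $A_vN_i$ is non-negative and $Z_i$ is positive and non-decreasing. A finite product of positive non-decreasing functions is non-decreasing, which closes the argument.

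There is no real obstacle here: the whole proposition is an elementary consequence of the monotonicity of $T\mapsto e^{-c/T}$ on $(0,\infty)$ for $c\ge 0$, together with preservation of inequalities under integration and under products of non-negative factors. The only subtlety to flag explicitly is integrability of the Boltzmann factor, which is delivered by \eqref{eq. at least linear growth of the Hamiltonian} via the dominating envelope $\exp\{-\mathfrak{C}on\,\|\omega_i\|/(RT)\}$.
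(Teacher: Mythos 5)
Your proof is correct, but it takes a different (and slightly more elementary) route than the paper. The paper differentiates under the integral sign, computing
\begin{equation*}
\frac{\dd Z_i(T)}{\dd T}=\int_{\Omega_i}\frac{H_{i}(\omega_i)}{RT^2}\,e^{-\frac{H_{i}(\omega_i)}{RT}}\,\dd \omega_i\geq 0,
\end{equation*}
and invokes the linear-growth condition \eqref{eq. at least linear growth of the Hamiltonian} to guarantee convergence of this derivative integral; you instead use the non-negativity of $H_i$ to get the pointwise monotonicity of $T\mapsto e^{-H_i(\omega_i)/(RT)}$ and then integrate the pointwise inequality. Your version sidesteps the (unstated in the paper) justification for interchanging differentiation and integration, and it uses the growth hypothesis only for finiteness of $Z_i(T)$, which is arguably the cleaner role for that assumption here; the paper's version gives the extra quantitative information that the derivative equals the mean of $H_i/(RT^2)$ under the Boltzmann weight, which is thematically consistent with the heat-capacity computations elsewhere in the appendix. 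For the whole-system function $Z(T,N)$ both arguments coincide: a product of positive non-decreasing factors with non-negative exponents is non-decreasing. Both proofs are valid; there is no gap in yours.
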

\begin{proof}
	Note that by definition $Z(T,N)$ is non-descending with respect to the temperature if each $Z_{i}(T)$ is non-descending. 
	Therefore, we only need to show $Z_{i}(T)$ ($i=1,\dots,n$) to be non-decreasing.
	The derivative of $Z_{i}(T)$ is given by $$\frac{\dd Z_i(T)}{\dd T}=\int_{\Omega_i}\frac{H_{i}(\omega)}{RT^2} e^{-\frac{H_{i}(\omega_i)}{RT}} \dd \omega_i\geq0,$$ where the convergence of the integral is guaranteed by \eqref{eq. at least linear growth of the Hamiltonian}. 
	Therefore, $Z_{i}(T)$ is non-descending, which proves the result.
\end{proof}

Moreover, in this paper, we adopt the following assumption, which suggests each partition function $Z_i(T)$ to grow to infinity at the infinite temperature.
\begin{condition}{\label{condition. 1}}
	Each partition function $Z_i(T)$ satisfies $\lim_{T\to \infty} Z_{j} (T) = \infty$.
\end{condition}
This assumption is not very restrictive as it holds true in many well-known cases, such as the ideal gas molecules where the Hamiltonian has a quadratic form, and, therefore, its partition function $Z_i(T)$ goes to infinity \cite{sethna2006statistical}. 
\cref{condition. 1} is mainly applied to showing the existence and uniqueness of a detailed balanced equilibrium, (see \cref{section asymptotic stability} and \cref{section proof of asymptotic stability}).

For the $i$-th species, each molecular system's Helmholtz free energy and entropy are given by 
\begin{equation}\label{gGao}
g_{i}(T)\triangleq -A_vRT\ln Z_{i}(T) \quad\text{and} \quad
s_{i}(T) \triangleq -\frac{\dd g_{i}(T)}{\dd T}
\end{equation}
with the units of $J/\text{mol}$ and $J/(\text{mole}\cdot K)$, respectively \cite{sethna2006statistical}.
For simplicity, we denote $g(T)\triangleq \left(g_1(T),g_2(T),...,g_n(T)\right)^{\top}$ and $s(T)\triangleq \left(s_1(T),s_2(T),...,s_n(T)\right)^{\top}$ in the context of this paper.
Through simple calculation, one can arrive at the following expression
\begin{equation}{\label{eq small s help}}
s_{i}(T) = \frac{u_{i}(T)-g_i(T)}{T}
\end{equation}
which will be frequently used in later analyses.
Moreover, according to L'Hospital's rule, the Helmholtz free energy satisfies the low temperature limit 
\begin{equation}{\label{eq. low temperature of free energy}}
\lim_{T\to 0} g_{i}(T)=u_{i}(0).
\end{equation}
For the whole system, the Helmholtz free energy (with the unit of $J$) is defined by
\begin{eqnarray}
G(T,N) &\triangleq& -RT\ln Z(T,N) \notag\\
&=& \sum_{i=1}^{n} \left[
\left(g_{i}(T) -A_vRT \right)N_i + A_vRT N_i \ln N_i  + \mathcal{O}\left( RT\ln ( A_v N_i)\right)  \notag
\right],
\end{eqnarray}
where $\mathcal{O}\left( RT\ln ( A_v N_i)\right)$ represents a higher order term whose order of magnitude is at most the same as the one of $RT\ln ( A_v N_i)$, and  the second equality follows immediately from Stirling's approximation \cite{sethna2006statistical}.
Since the term $RT\ln ( A_v N_i)$ is very small in the molar scale and at the real-world temperature, the higher order term in the above formula can be neglected, and, therefore, the free energy is expressed as 
\begin{equation}{\label{eq. Gibbs free energy whole system}}
G(T,N)\triangleq \sum_{i=1}^{n} \left[
\left(g_{i}(T) -A_vRT \right)N_i + A_vRT N_i \ln N_i   
\right].
\end{equation}
Moreover, the whole system's entropy (with the unit of $J\cdot\text{mole}/K$) is defined by
\begin{equation}{\label{eq. entropy function whole system}}
S(T,N)\triangleq-\frac{\partial G(T,N)}{\partial T}
= \sum_{i=1}^{N} \left[
\left(s_i(T)+A_vR \right)N_i - A_vR N_i \ln N_i  
\right],
\end{equation}
and, by \eqref{eq small s help}, \eqref{eq. Gibbs free energy whole system}, and \eqref{eq. entropy function whole system}, there holds the relation
\begin{equation}{\label{eq. small S help}}
S(T,N)=\frac{U(T,N)-G(T,N)}{T}.
\end{equation}

\subsubsection{Heat capacity and the fundamental equation}
The heat capacity is the amount of heat supplying to a system to raise the temperature by a unit degree. 
For the molecule system (of $i$-th substance) and the whole system, the heat capacity is defined respectively as
\begin{equation}{\label{eq. Definition of heat capacity}}
c_i(T)\triangleq\frac{\dd u_{i}(T)}{\dd T}
\quad
\text{and}\quad
C(T,N)\triangleq \frac{\partial U(T,N)}{\partial T}
= N^{\top} c(T),
\end{equation}
where $c(T)=(c_1(T),...,c_n(T))^\top$. 
Since the heat capacity of each molecule system can be seen as the variance of the Hamiltonian with respect to Boltzmann distribution \cite{sethna2006statistical}, heat capacities, $c_i(T)$ and $C(T,N)$, are both non-negative functions.
Moreover, by the at least linear growth of each Hamiltonian, the heat capacity $c_i(T)$ is strictly positive at any positive temperature.
\begin{proposition}{\label{pro. positivity of heat capcities}}
	If the inequality \eqref{eq. at least linear growth of the Hamiltonian} holds, then
	\begin{enumerate}
		\item  $c_j(T)>0$ for any $T>0$ and $i=1,\dots,n$,
		\item $C(T,N)>0$ for any $T>0$ and $N\in\mathbb{R}^{n}_{>0}$.
	\end{enumerate}
\end{proposition}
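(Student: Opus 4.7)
The plan is to compute $c_i(T)$ explicitly as (up to a positive factor) the variance of $H_i$ under the Boltzmann measure, and then use the at-least-linear-growth assumption~\eqref{eq. at least linear growth of the Hamiltonian} to rule out $H_i$ being constant, whence the variance is strictly positive.

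First, I would rewrite $u_i(T)$ using the Boltzmann distribution \eqref{eq. Boltzmann distribution} as the expectation
\begin{equation*}
u_i(T) = A_v\,\mathbb{E}_T[H_i] = A_v \int_{\Omega_i} H_i(\omega_i)\,\frac{e^{-H_i(\omega_i)/RT}}{Z_i(T)}\,d\omega_i,
\end{equation*}
and then differentiate under the integral sign. The linear-growth condition~\eqref{eq. at least linear growth of the Hamiltonian} provides domination: the integrands $H_i e^{-H_i/RT}$ and $H_i^2 e^{-H_i/RT}$ are dominated, on any compact neighborhood of $T>0$, by integrable functions of the form $C\,e^{-\mathfrak{C}on\|\omega_i\|/(2RT)}$, which justifies differentiation. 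A direct calculation then yields the standard thermodynamic identity
\begin{equation*}
c_i(T) \;=\; \frac{du_i(T)}{dT} \;=\; \frac{A_v}{RT^2}\Bigl(\mathbb{E}_T[H_i^2] - (\mathbb{E}_T[H_i])^2\Bigr)
\;=\; \frac{A_v}{RT^2}\,\mathrm{Var}_T(H_i).
\end{equation*}

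Next I would argue that $\mathrm{Var}_T(H_i) > 0$. The Boltzmann distribution assigns positive mass to every open set in $\Omega_i$ (its density is strictly positive and the partition function is finite by \eqref{eq. at least linear growth of the Hamiltonian}), so $\mathrm{Var}_T(H_i) = 0$ would force $H_i$ to be constant almost everywhere, and hence to be bounded. But~\eqref{eq. at least linear growth of the Hamiltonian} gives $H_i(\omega_i) \to \infty$ as $\|\omega_i\| \to \infty$, contradicting boundedness (assuming $\Omega_i$ is unbounded, which is the relevant physical setting; in a bounded $\Omega_i$, the same growth hypothesis similarly precludes constancy unless $\Omega_i$ is a single point, a degenerate case one can exclude). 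Therefore $c_i(T) > 0$ for every $T>0$, proving part~1. Part~2 is then immediate: $C(T,N) = \sum_{i=1}^n N_i c_i(T) > 0$ whenever $N \in \mathbb{R}^n_{>0}$ and $T>0$.

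The only delicate step is the justification of differentiation under the integral, which is routine once the exponential decay from~\eqref{eq. at least linear growth of the Hamiltonian} is used; the remaining argument is a direct application of the variance formula for the canonical ensemble, and no deeper obstacle is anticipated.
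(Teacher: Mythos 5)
Your proposal is correct and follows essentially the same route as the paper: the paper's proof likewise identifies $c_i(T)$ with the variance of $H_i$ under the Maxwell--Boltzmann distribution and observes that \eqref{eq. at least linear growth of the Hamiltonian} prevents $H_i$ from being almost everywhere constant, so the variance is strictly positive, with part~2 following from $C(T,N)=N^{\top}c(T)$. You merely supply more detail (the explicit variance identity, domination for differentiating under the integral, and the boundedness caveat) than the paper's one-line argument.
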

\begin{proof}
	The inequality \eqref{eq. at least linear growth of the Hamiltonian} suggests the function $H_i(\cdot)$ to be \textit{not} an almost everywhere constant function, and, therefore, the heat capacity $c_j(T)$, the variance of $H_i(\cdot)$ with respect to the Maxwell-Boltzmann distribution, is great than 0 at any positive temperature. 
	Therefore,  the first result is shown. 
	The second result follows immediately from the first result and the definition \eqref{eq. Definition of heat capacity}.
\end{proof}

For a thermodynamic system with $n$ components, Callen's first postulate \cite{callen1998thermodynamics} states that $n+2$ extensive variables, $(U,V,N_1,\dots,N_n)$ with $V$ the volume, define all the macroscopic properties of the system.
Recall that we fixed the system's volume in this paper. 
Therefore, Callen's first postulate suggests the system we consider can be fully characterized by $n+1$ extensive variables, $(U,N_1,\dots,N_n)$.
The relations between other thermodynamic quantities (for instance, the temperature and the entropy) and these $n+1$ extensive variables are shown in the following.

\begin{proposition}{\label{pro. fundamental equation}}
	If the inequality \eqref{eq. at least linear growth of the Hamiltonian} holds, then for any  $$(U,N)^{\top} \in \{ (U,N)^{\top} ~ | ~ U > N^\top u(0)\}\bigcap \mathbb{R}^{n+1}_{>0}$$ there hold
	\begin{equation}{\label{eq. differnetiating T}}
	\dd T= \frac{1}{C(T)} \dd U -\frac{u(T)}{C(T)} \dd N
	\end{equation} 
	and
	\begin{equation}{\label{eq. fundamental equation}}
	\dd S= \frac{1}{T} \dd U -\frac{\mu(U,N)}{T} \dd N,
	\end{equation}
	where $\mu(U,N)=g(T)+A_vRT\Ln N$.
\end{proposition}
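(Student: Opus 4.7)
The plan is to treat $T$ as a function of $(U,N)$ via the defining relation $U = N^{\top}u(T)$ from equation (internal energy of the whole system), and then unwind the definition of the entropy $S(T,N)$ from equation (entropy function whole system) via the chain rule. The condition $U > N^{\top}u(0)$ combined with the monotonicity of $u_i(T)$ (which follows from $c_i = u_i'> 0$ by \cref{pro. positivity of heat capcities}) guarantees that the implicit equation $U = N^{\top}u(T)$ admits a unique positive solution $T = T(U,N)$ on the stated domain, so the change of variables $(T,N)\leftrightarrow (U,N)$ is well-defined and smooth.

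For \eqref{eq. differnetiating T}, I would simply differentiate the identity $U = N^{\top}u(T)$, giving
\[
\dd U = u(T)^{\top}\dd N + N^{\top}u'(T)\,\dd T = u(T)^{\top}\dd N + C(T,N)\,\dd T,
\]
using $u'(T)=c(T)$ and $C(T,N) = N^{\top}c(T)$ from \eqref{eq. Definition of heat capacity}. Since $C(T,N)>0$ on the positive domain (again by \cref{pro. positivity of heat capcities}), we can solve for $\dd T$ to obtain \eqref{eq. differnetiating T}.

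For \eqref{eq. fundamental equation}, I would first compute the partial derivatives of $S(T,N)$ directly from \eqref{eq. entropy function whole system}. For the $N_i$-derivative, straightforward differentiation gives $\partial_{N_i}S = s_i(T) - A_vR\ln N_i$. For the $T$-derivative, the key identity is $s_i'(T) = c_i(T)/T$, which I would derive by differentiating $s_i(T) = (u_i(T)-g_i(T))/T$ from \eqref{eq small s help} and using $g_i'(T)=-s_i(T)$ from the definition \eqref{gGao}; summing gives $\partial_T S = C(T,N)/T$. Substituting $\dd T$ from \eqref{eq. differnetiating T} into $\dd S = \partial_T S\,\dd T + (\partial_N S)^{\top}\dd N$ yields
\[
\dd S = \frac{1}{T}\dd U + \Bigl(s(T) - \frac{u(T)}{T} - A_vR\Ln N\Bigr)^{\top}\dd N.
\]
Finally, applying \eqref{eq small s help} coordinatewise gives $s(T) - u(T)/T = -g(T)/T$, so the bracketed coefficient collapses to $-(g(T) + A_vRT\Ln N)/T = -\mu(U,N)/T$, producing \eqref{eq. fundamental equation}.

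The only real subtlety is justifying the implicit function step that lets us pass from $(T,N)$ to $(U,N)$ as independent coordinates; this is the main obstacle but is resolved cleanly by the strict positivity of $C(T,N)$ together with the boundary condition $U > N^{\top}u(0)$, which together with \eqref{eq. low temperature of free energy}-type reasoning (namely $\lim_{T\to 0^+}u_i(T)=u_i(0)$) ensures that the level set $\{U = N^{\top}u(T)\}$ intersects the positive temperature axis in exactly one point. After that, the rest is bookkeeping with the identities already established in \cref{section thermodynamics}.
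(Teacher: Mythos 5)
Your proposal is correct and follows essentially the same route as the paper's proof: the implicit function theorem justified by $C(T,N)>0$ (from \cref{pro. positivity of heat capcities}) to pass between $(T,N)$ and $(U,N)$, differentiation of $U=N^{\top}u(T)$ to obtain \eqref{eq. differnetiating T}, and then differentiation of \eqref{eq. entropy function whole system} using the identity $s_i'(T)=c_i(T)/T$ (derived from \eqref{eq small s help} and $g_i'=-s_i$) followed by substitution of \eqref{eq. differnetiating T}. Your added care in justifying existence and uniqueness of the solution $T(U,N)$ on the stated domain is a slight refinement of a point the paper treats implicitly, but the argument is the same.
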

\begin{proof}
	According to \cref{pro. positivity of heat capcities}, we have $\frac{\partial U(T,N)}{\partial T} =  C(T,N) >0$ for all $(T,N)^\top\in\mathbb{R}^{n+1}_{>0}$.
	Thus by the implicit function theorem, the relation \eqref{eq. differnetiating T} follows immediately from differentiating \eqref{eq. internal energy of the whole system} with the definition region $ \{(U,N)^{\top}~|~U> N^\top u(0)\}\bigcap \mathbb{R}^{n+1}_{>0}$.
	Moreover, by differentiating \eqref{eq. entropy function whole system}, we have the relation
	\begin{eqnarray}
	\dd S&=& 
	N^\top \frac{\dd \left[ (u(T)-g(T))/{T}\right]}{\dd T} \dd T
	+ \left[s(T)-A_vR\Ln N\right]^{\top} \dd N \notag\\
	&=& \frac{ N^{\top} c(T)}{T} \dd T + \left[s(T)-A_vR\Ln N\right]^{\top} \dd N, \notag
	\end{eqnarray}
	where the first line follows from \eqref{eq small s help}.
	By plugging \eqref{eq. differnetiating T} into the last line, we arrive at \eqref{eq. fundamental equation}.
\end{proof}

The equation \eqref{eq. fundamental equation} is the fundamental thermodynamic relation when the volume is fixed, and the term $\mu$ in \eqref{eq. fundamental equation} is called the chemical potential, which determines the direction of reactions \cite{kondepudi2014modern}. 
Moreover, from the relation \eqref{eq. fundamental equation}, we can observe that the entropy function $S(U,N)$ is strictly concave.

\begin{proposition}{\label{pro. strict concavity}}
	If the inequality \eqref{eq. at least linear growth of the Hamiltonian} holds, then $S(U,N)$ is strictly concave in the region $\{(U,N)^{\top}~|~U> N^\top u(0)\}\bigcap \mathbb{R}^{n+1}_{>0}$.
\end{proposition}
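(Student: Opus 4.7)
The plan is to prove strict concavity by computing the Hessian of $S(U,N)$ explicitly and showing the associated quadratic form is negative definite on the specified region. Since $S$ is defined via \eqref{eq. entropy function whole system} as a function of $(T,N)$ but we want concavity in $(U,N)$, the key is to use the implicit dependence $T = T(U,N)$ encoded in \eqref{eq. internal energy of the whole system}, which is well-defined on the region in question by \cref{pro. positivity of heat capcities}.

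First I would invoke the fundamental relation \eqref{eq. fundamental equation} to read off the first-order partial derivatives:
\begin{equation*}
\frac{\partial S}{\partial U} = \frac{1}{T}, \qquad \frac{\partial S}{\partial N} = -\frac{\mu(U,N)}{T} = -\frac{g(T)}{T} - A_v R \Ln N.
\end{equation*}
Next I would differentiate these once more, using \eqref{eq. differnetiating T} to handle derivatives of $T$ and the identity $\frac{\dd}{\dd T}\bigl(g_i(T)/T\bigr) = -u_i(T)/T^2$ (which follows from $g_i'(T)=-s_i(T)$ together with \eqref{eq small s help}). After a straightforward chain-rule computation, this yields the Hessian
\begin{equation*}
\nabla^2 S = \begin{pmatrix} -\dfrac{1}{T^2 C} & \dfrac{u(T)^\top}{T^2 C} \\[6pt] \dfrac{u(T)}{T^2 C} & -\dfrac{u(T)u(T)^\top}{T^2 C} - A_v R\,\diag(1/N_i) \end{pmatrix},
\end{equation*}
where $C = C(T,N)$. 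The symmetry of the off-diagonal blocks serves as a useful consistency check.

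The decisive step is to show negative definiteness. For any $(\alpha, v^\top) \in \mathbb{R} \times \mathbb{R}^n$, I would complete the square to rewrite the quadratic form as
\begin{equation*}
(\alpha, v^\top)\, \nabla^2 S \,(\alpha, v^\top)^\top = -\frac{(\alpha - v^\top u(T))^2}{T^2 C(T,N)} - A_v R \sum_{i=1}^{n} \frac{v_i^2}{N_i}.
\end{equation*}
Each term is non-positive, and the sum vanishes only when $v = \mathbbold{0}_n$ and $\alpha = v^\top u(T) = 0$. Positivity of the denominators is precisely what \cref{pro. positivity of heat capcities} together with $N \in \mathbb{R}^n_{>0}$ and $T>0$ (guaranteed by $U > N^\top u(0)$) delivers. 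Hence $\nabla^2 S$ is negative definite throughout the stated region, proving strict concavity.

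The only potentially delicate point will be keeping the implicit-function bookkeeping clean when differentiating $\mu/T$, since $\mu$ depends on $(U,N)$ through both $T$ and $N$; the identity $\frac{\dd}{\dd T}\bigl(g_i(T)/T\bigr) = -u_i(T)/T^2$ is the small algebraic miracle that makes the two off-diagonal blocks agree and that makes the completion-of-the-square in the last display work out so cleanly. I expect this to be the main (though very modest) obstacle — everything else is direct substitution and invoking the previously established positivity results.
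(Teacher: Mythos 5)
Your proposal is correct and follows essentially the same route as the paper: compute the Hessian of $S$ in the variables $(U,N)$ via \eqref{eq. fundamental equation} and \eqref{eq. differnetiating T}, then establish negative definiteness — your completion of the square is exactly the congruence transformation $\mathcal{H}=-\tfrac{1}{T^2C}\mathcal{P}^{\top}\,\mathrm{diag}(1,\mathcal{D})\,\mathcal{P}$ used in the paper. (Incidentally, your lower-right block $A_vR\,\mathrm{diag}(1/N_i)$ is the correct one; the paper's displayed $\mathcal{D}(N)=A_vR\,\mathrm{diag}\{\ln N_i\}$ appears to be a typo.)
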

\begin{proof}
	By \eqref{eq. fundamental equation} and \eqref{eq. differnetiating T}, we can calculate the Hessian matrix of $S(U,N)$ as 
	\begin{equation}
	\mathcal{H}(U,N)
	\triangleq
	\left(
	\begin{array}{cc}
	\frac{\partial^2 S}{\partial U^2} & \frac{\partial^2 S}{\partial U \partial N} \\
	\left(\frac{\partial^2 S}{\partial U \partial N}\right)^{\top} &
	\frac{\partial^2 S}{\partial N^2}
	\end{array}
	\right)\notag 
	=	 
	\frac{-1}{T^2C(T)}
	\left(
	\begin{array}{cc}
	1 & -u^{\top}(T) \notag\\
	-u(T) & \mathcal{D}(N)+ u(T) u^{\top}(T)
	\end{array}
	\right),
	\end{equation}
	where $\mathcal{D}(N)=A_v R \cdot \text{diag}\{\ln N_1, \ln N_2, \dots, \ln N_n\}$.
	Obviously, $\mathcal{D}(N)$ is a positive diagonal matrix.
	Moreover, by denoting the matrix
	\begin{equation*}
	\mathcal{P}\triangleq
	\left(
	\begin{array}{cc}
	1 &  -u(T) \\
	& \mathcal{I}_{n}
	\end{array}
	\right),
	\end{equation*}
	we arrive at the relation
	\begin{equation}
	\mathcal{H}(U,N) = 
	-\frac{1}{T^2C(T)}
	\mathcal{P}^{T} 
	\left(
	\begin{array}{cc}
	1 &  \notag\\
	& \mathcal{D}(N)
	\end{array}
	\right)
	\mathcal{P}
	<0,
	\end{equation}
	which proves this proposition. 
\end{proof}

In the context of this paper, we call a state $(U,N)$ positive if $(U,N)^{\top}\in \mathbb{R}^{n}_{>0}$ and $U>N^{\top}u(0)$, i.e., the internal energy, mass amounts, and temperature are all positive.

\subsection{Isothermal CRNT}\label{iso_CRNT}
We then review basic concepts and terminologies about isothermal CRN. 

Consider an isothermal CRN system with $n$ substances which are well mixed and a unit volume. Based on Callen's first postulate \cite{callen1998thermodynamics}, the system can be fully described by the amounts of $n$ substances, i.e., $N=(N_1,...,N_n)$. Further, assume that there are $r$ reactions taking place among $n$ substances with the reaction scheme
\begin{equation}{\label{eq. isothermal chemical reaction scheme}}
\alpha_{1,j} X_{1}+ 
\cdots + \alpha_{n,j} X_{n} \stackrel{k_j}{\longrightarrow}    
\tilde\alpha_{1,j} X_{1}+ 
\cdots+ \tilde\alpha_{n,j} X_{n}, \quad j=1,...,r,
\end{equation}
where $X_i$ represents the $i$-th species in the system, $\alpha_{i,j}$ and $\tilde\alpha_{i,j}$ are non-negative integers called stoichiometric coefficients, the integer vectors, $(\alpha_{1,j},\dots,\alpha_{n,j})^{\top}$ and $(\tilde \alpha_{1,j},\dots,\tilde \alpha_{n,j})^{\top}$, are termed as complexes, and $k_j$ are reaction rate constants.
The isothermal CRNT \cite{feinberg1987chemical,feinberg1995existence,horn1972general} views the above reaction scheme as a graph, where the linear combinations of species on both sides of the reactions are the vertexes, the reaction arrows are the edges, and reaction rate constants are weights associated with edges. Mathematically, the isothermal CRN is proposed by a quadruplet $(\mathcal{S},\mathcal{C},\mathcal{R},\mathcal K)$ with the definition as follows. 
\begin{definition}[Isothermal CRN \cite{feinberg1987chemical,feinberg1995existence,horn1972general}]{\label{def. isothermal crn}}
	An isothermal CRN is quadruplet $(\mathcal{S},\mathcal{C},\mathcal{R},\mathcal K)$, in which 
	\begin{itemize}
		\item  $\mathcal{S}\triangleq\{X_1,\dots, X_n\}$ is the species set representing the considered substances,
		\item $\mathcal{C}\triangleq\{y_1,\dots, y_m\}$ is the complex set consisting of all distinguished complexes where $m$ is the size of the set.
		\item $\mathcal{R}\triangleq\{y_{\sigma_{1}}\to y_{\pi_{1}},\dots,y_{\sigma_{r}}\to y_{\pi_{r}}\}$ is the reaction set with $\sigma_{j}$ and $\pi_{j}$ $(1\leq j\leq r)$ being respectively the indexes of the substrate and product complexes of the $j$-th reaction. Particularly, for each reaction, $\sigma_{j}$ and $\pi_{j}$ should not be the same.
		\item $\mathcal{K}\triangleq(k_{1},\dots,k_{r})$ is the kinetics class.
	\end{itemize}
\end{definition}

Utilizing the graph theory, recent literature \cite{van2015complex,rao2013graph,van2016network} introduce the \textit{complex matrix}, $Y\triangleq(y_1,\dots,y_{m})\in\mathbb{R}^{n\times m}$ whose $j$th column corresponds to the $j$th complex in $\mathcal C$, and the \textit{incidence matrix}, $D\in\mathbb{R}^{m\times r}$ where
\begin{equation}{\label{eq. incidence matrix}}
D_{i,j}\triangleq\left\{
\begin{array}{cc}
1,   & i=\pi_{j},  \\
-1,  & i=\sigma_{j}, \\
0,   & \text{otherwise},
\end{array}
\right.
\quad i\in\{1,\dots,m\} \text{~and~} j\in\{1,\dots,r\}.
\end{equation}
The former corresponds to the complexes (vertexes) while the latter shows linkages between these complexes (i.e., edges). They jointly define the stoichiometric matrix $\Gamma$ by $\Gamma\triangleq YD$. When equipped with mass-action kinetics, the reaction rate, denoted by $v_j(\cdot)$ for the $j$th reaction, may be evaluated according to the power law with respect to the concentrations of substances, i.e., 
\begin{equation}{\label{eq. mass action kinetics}}
v_{j}(N) = k_{j} {N}^{y_{\sigma_{j}}}\footnote{
	In the usual expression of mass-action kinetics, the reaction rate is proportional to the product of concentrations. 
	As we restrict the system volume to be fixed and unitary, the concentrations and amounts of substances are numerically the same, and, thus, \eqref{eq. mass action kinetics} does not violate the usual expression.}
= k_{j}\Exp{\left(y_{\sigma_{j}}^{\top} \ln N\right)}, \qquad j=1,\dots,r,
\end{equation}
Therefore, the dynamics of the system has the form
\begin{equation}{\label{eq. dynamics of isothermal CRN}}
\dot N 
=YDv(N)=\Gamma v(N),
\end{equation}
where $v(N)=\left(v_{1}(N),\dots,v_{r}(N)\right)$.
From \eqref{eq. dynamics of isothermal CRN}, one can observe that the increment of the state, $N(t)-N(0)$, where $t$ and $0$ are the time argument, always belongs to the linear space $\text{Im~}  \Gamma$, and, therefore, the state can only evolve in an invariant set $\left(N(0)+\text{Im~} \Gamma \right) \bigcap \mathbb{R}^{n}_{\geq 0}$.
The isothermal CRNT terms the linear space $\text{Im~}  \Gamma$ as the {\em stoichiometric subspace}, the invariant sets $\left(N(0)+\text{Im~} \Gamma \right) \bigcap \mathbb{R}^{n}_{\geq 0}$ as the {\em stoichiometric compatibility class}, 
and the positive part of the invariant set, $\left(N(0)+\text{Im~} \Gamma \right) \bigcap \mathbb{R}^{n}_{> 0}$, as the {\em positive stoichiometric compatibility class}.

Notably, along with real chemical reactions, CRN can also model boundary fluxes by using zero complexes to represent environment (see \cite[section 5]{van2013mathematical}), which makes the theory very useful in both chemical engineering and biology applications.

A mass-action CRN $(\mathcal{S},\mathcal{C},\mathcal{R},\mathcal{K})$ is {\em reversible} if  forward and backward reactions $y_{\sigma_{j}}\to y_{\pi_{j}}$ and $y_{\pi_{j}}\to y_{\sigma_{j}}$ come in pairs.
Among all chemical reaction networks, detailed balanced networks are special ones, where the network is reversible, and at some equilibrium, reaction rates of each pair of reversible reactions are balanced.
In isothermal CRNT, such equilibria that balance the forward and backward reactions are termed as detailed balanced equilibria. 
In the past few decades, a large body of literature has investigated detailed balanced networks; a summary of the fundamental properties of them is listed as follows.

\begin{theorem}[\cite{feinberg1995existence,van2013mathematical}]{\label{thm. isothermal detailed balanced}}
	Let a mass-action isothermal CRN $(\mathcal{S},\mathcal{C},\mathcal{R},\mathcal{K})$ be detailed balanced and possess a positive detailed balanced equilibrium $N^*$. By denoting $${G}_{\mathcal A}(N) \triangleq N^{\top} \left( \Ln{{N}} - \Ln N^* \right)-\mathbbold{1}_{n}^{\top}\left(N-N^*\right),$$ the following properties hold.
	\begin{enumerate}
		\item ${G}_{\mathcal A}(N)$ (as a function of $N$) serves as a Lyapunov function for the system; $\dot {G}_{\mathcal A}(N)\leq 0$ where the equality holds if and only if $\nabla G (N) \in \ker (\Gamma^{\perp})$
		\item Any positive equilibrium of the system is detailed balanced. 
		\item In each positive stoichiometric compatibility class, there exists only one equilibrium, and this equilibrium is locally asymptotically stable. 
	\end{enumerate}
\end{theorem}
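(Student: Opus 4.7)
The plan is to prove the three parts of the theorem in the order stated, with each part leveraging the previous one.

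\textbf{Part 1 (dissipativeness of $G_{\mathcal A}$).} First I would specialize the compact reversible formulation derived in Section 3 to the isothermal case ($T \equiv T_e = T^*$, so $K$ reduces to a constant positive diagonal matrix), obtaining $\dot N = -YBKB^{\top}\Exp(Y^{\top}(\Ln N - \Ln N^*))$. Since a direct differentiation gives $\nabla G_{\mathcal A}(N) = \Ln N - \Ln N^*$, setting $q \triangleq Y^{\top}(\Ln N - \Ln N^*)$ yields $\dot G_{\mathcal A}(N) = -q^{\top}BKB^{\top}\Exp(q)$. Expanding along the columns of $B$ turns this into $-\sum_{j=1}^{r/2} K_{jj}(q_{\pi_{2j-1}} - q_{\sigma_{2j-1}})(e^{q_{\pi_{2j-1}}} - e^{q_{\sigma_{2j-1}}})$, which is non-positive termwise by monotonicity of $\exp$. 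Equality forces $B^{\top}Y^{\top}(\Ln N - \Ln N^*) = 0$, i.e., $\nabla G_{\mathcal A}(N) \in \ker(\Gamma^{\top})$, using the identity $\ker(B^{\top}Y^{\top}) = \ker(\Gamma^{\top})$.

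\textbf{Part 2 (every positive equilibrium is detailed balanced).} If $N^{**}$ is a positive equilibrium then $\dot N|_{N^{**}} = 0$, hence $\dot G_{\mathcal A}(N^{**}) = 0$, and the equality clause of Part 1 forces $\Ln N^{**} - \Ln N^* \in \ker(\Gamma^{\top}) = \ker(B^{\top}Y^{\top})$. Substituting back into the mass-action expressions for forward and backward rates of each reversible pair yields $v_{2j-1}(N^{**}) = v_{2j}(N^{**})$, i.e., detailed balance.

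\textbf{Part 3 (existence, uniqueness, and local asymptotic stability).} The function $G_{\mathcal A}$ is strictly convex on $\mathbb R_{>0}^n$ (sum of the strictly convex $N_i \ln N_i$ contributions plus affine parts), hence has at most one critical point on any convex set. Part 2 identifies equilibria in $(N^o + \text{Im}\,\Gamma) \cap \mathbb R_{>0}^n$ with critical points of $G_{\mathcal A}$ restricted to that affine slice, so uniqueness is immediate. For existence I would use the Legendre/auxiliary-function idea previewed at the end of Section 4.1: restricted to $\gamma \in \ker(\Gamma^{\top})$, the function $\psi(\gamma) = (N^*)^{\top}\Exp(\gamma) - \gamma^{\top}N^o$ is strictly convex and (once shown to be coercive) attains a unique minimizer $\gamma^{**}$; the first-order condition gives $N^*\cdot\Exp(\gamma^{**}) - N^o \in (\ker\Gamma^{\top})^{\perp} = \text{Im}\,\Gamma$, so $N^{**} = N^*\cdot\Exp(\gamma^{**})$ is the required equilibrium. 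Local asymptotic stability then follows from Lyapunov's second method applied to $G_{\mathcal A}(N) - G_{\mathcal A}(N^{**})$, which is positive definite about $N^{**}$ on the compatibility class by strict convexity and strictly decreasing along trajectories away from $N^{**}$ by Parts 1 and 2.

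\textbf{Main obstacle.} The central difficulty sits in Part 3: verifying coercivity of $\psi$ on $\ker(\Gamma^{\top})$. Unbounded directions $\bar\gamma \in \ker(\Gamma^{\top})$ whose components tend to $-\infty$ make the exponential term vanish rather than blow up, so the linear term $-\bar\gamma^{\top}N^o$ must dominate in such directions. Establishing this requires a Feinberg-style geometric argument relating $\ker(\Gamma^{\top})$ to the strict positivity of $N^o$ (in essence Birch's theorem). Once coercivity is secured, the remainder of Part 3 is standard strict-convex optimization together with the LaSalle/Lyapunov machinery.
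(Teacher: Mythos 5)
Your proposal is correct. Note first that the paper does not actually prove this theorem: it appears in \cref{iso_CRNT} as background and is attributed to \cite{feinberg1995existence,van2013mathematical}, so there is no in-paper proof to match line by line. That said, your argument is precisely the isothermal specialization of the strategy the paper uses for its non-isothermal generalizations: your termwise expansion of $-q^{\top}BKB^{\top}\Exp(q)$ is the content of \cref{pro. Laplacian matrix} and \cref{lemma Laplacian matrix} in the proof of \cref{thm. stability}; your Part 2 is \cref{cor. detailed balancing of each equilibrium} with $T\equiv T^{*}$; and your Part 3 via the auxiliary function $\psi$ is exactly the route the paper flags at the end of its Legendre-transform discussion and carries out, in the harder non-isothermal form, in \cref{section proof of asymptotic stability}. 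The one place where you stop short --- coercivity of $\psi$ on $\ker(\Gamma^{\top})$ --- is indeed the crux, but in the isothermal case it needs nothing beyond the strict positivity of $N^{o}$: along a ray $\theta\bar\gamma$ with $\bar\gamma\in\ker(\Gamma^{\top})$, $\bar\gamma\neq\mathbbold{0}_{n}$, either some $\bar\gamma_{i}>0$ and the term $N^{*}_{i}e^{\theta\bar\gamma_{i}}$ dominates, or all $\bar\gamma_{i}\le 0$ with at least one strictly negative and the linear term $-\theta\bar\gamma^{\top}N^{o}\to+\infty$ because $N^{o}\in\mathbb{R}^{n}_{>0}$; convexity of $\psi$ then upgrades coercivity along every ray to boundedness of the sublevel sets (a closed convex set containing no half-line is bounded, the same fact the paper invokes in the proof of \cref{pro. compactness of SC}). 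So while the existence--uniqueness statement you are establishing is Birch's theorem, the proof does not need to presuppose it. With that gap filled in, all three parts go through exactly as you describe.
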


The Lyapunov function ${G}_{\mathcal A}(N)$ first introduced by Horn and Jackson \cite{horn1972general} is called the pseudo-Helmholtz free energy because of the similarity between these two functions' expressions.
Specifically,we can easily observe by \eqref{eq. Gibbs free energy whole system} that 
\begin{equation}{\label{eq. Availability function explanation for pseudo-Helmholtz free energy}}
A_vRT^*G_{\mathcal A}(N)=G(T^*,N)-\frac{\partial G(T^*,N^*)}{\partial N}\left(N-N^*\right)-G(T^*,N^*), 
\end{equation}
where $T^*$ is the temperature of the isothermal system, $G(T^*,N)$ is the Helmholtz free energy (see \eqref{eq. Gibbs free energy whole system}), and the term on the right-hand side of the equality is the availability function of the Helmholtz free energy with respect to mass amounts (c.f. \cite{alonso2001stabilization}).

\section{Proofs of the main results}\label{section proof}
We give the detailed proofs of our two main theorems, \cref{thm. stability} and \cref{thm. asymptotic stability}, in this appendix. 

\subsection{The proof of \cref{thm. stability}}{\label{section proof of stability}}
We first introduce basic concepts and properties of Laplacian matrix.

\begin{definition}{\label{def. Laplacian matrix}}
	A matrix $L\in\mathcal{R}^{m\times m}$ is a balanced Laplacian matrix if i) all its diagonal elements are positive, ii) all its off-diagonal elements are non-positive, and iii) its column sums and row sums are zero, i,e, $\mathbbold{1}^{\top}_{m}L=\mathbbold{0}^{\top}_{m}$ and $L \mathbbold{1}_{m}=\mathbbold{0}_{m}$.
\end{definition}

\begin{proposition}{\label{pro. Laplacian matrix}}
	The matrices, $B_{\rm CR} K_{\rm CR}(T)B_{\rm CR}^{\top}$ and $B_{\rm IO} K_{\rm IO}B_{\rm IO}^{\top}$, are both balanced Laplacian matrices when $T>0$. 
\end{proposition}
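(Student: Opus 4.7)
The plan is to verify each of the three defining properties of Definition \ref{def. Laplacian matrix} directly from the structure of the matrices involved. The key structural observation is that every column of $B_{\rm CR}$ (respectively $B_{\rm IO}$) encodes one reversible reaction pair whose substrate and product complexes are distinct: in $\bar{\mathcal{R}}_{\rm CR}$ we have $y_{\sigma_{2j-1}} \neq y_{\pi_{2j-1}}$ because such reactions have non-zero substrate and product complexes that are required to differ by Definition \ref{def. non-isothermal crn}, and in $\bar{\mathcal{R}}_{\rm IO}$ the two complexes of a reversible pair consist of a zero complex and some $\delta_i$. Hence each column of $B_\ell$ (with $\ell \in \{\rm CR, IO\}$) contains exactly one entry equal to $+1$, one entry equal to $-1$, and zeros elsewhere, so in particular $\mathbbold{1}_m^\top B_\ell = \mathbbold{0}_{r_\ell/2}^\top$.

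Next, denoting $L_\ell(T) := B_\ell K_\ell B_\ell^\top$ and using that $K_\ell$ is diagonal, the $(i,k)$ entry of $L_\ell(T)$ equals $\sum_j (K_\ell)_{j,j} (B_\ell)_{i,j} (B_\ell)_{k,j}$. I would split into cases. For the diagonal entry $i=k$ each summand equals $(K_\ell)_{j,j} (B_\ell)_{i,j}^2 \in \{0,(K_\ell)_{j,j}\}$, and since $K_{\rm CR}(T)$ (for $T>0$) and $K_{\rm IO}$ were shown in Section \ref{section dissipativeness} to be positive definite, the diagonal entry is strictly positive whenever complex $i$ participates in at least one reaction of the sub-network (if a complex is absent one simply restricts $L_\ell$ to the active complexes, which is the standard convention in the CRN literature). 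For off-diagonal entries $i \neq k$, a non-zero contribution requires the $j$-th column of $B_\ell$ to have its $\pm 1$ entries precisely in rows $i$ and $k$; in that case $(B_\ell)_{i,j}(B_\ell)_{k,j}=-1$, so $(L_\ell(T))_{i,k} \le 0$.

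Finally, the column-sum and row-sum properties are immediate: using $\mathbbold{1}_m^\top B_\ell = \mathbbold{0}^\top$ one gets
\begin{equation*}
\mathbbold{1}_m^\top L_\ell(T) \;=\; \mathbbold{1}_m^\top B_\ell K_\ell B_\ell^\top \;=\; \mathbbold{0}^\top,
\end{equation*}
and the symmetry $L_\ell(T)^\top = L_\ell(T)$ then yields $L_\ell(T)\mathbbold{1}_m = \mathbbold{0}$.

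There is no serious obstacle here; the entire argument reduces to the bookkeeping fact that $B_{\rm CR}$ and $B_{\rm IO}$ are signed incidence-type matrices of an undirected reaction graph whose columns sum to zero, combined with the positivity of the diagonal weights $K_{\rm CR}(T)$ (for $T>0$) and $K_{\rm IO}$ already established earlier in Section \ref{section dissipativeness}. The only mild subtlety is the interpretation of ``positive diagonal'' for complexes that do not appear in the corresponding sub-network, which is handled by the standard restriction to active complexes.
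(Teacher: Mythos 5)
Your proof is correct and follows essentially the same route as the paper's: verify the three defining properties directly, using the column structure of $B_\ell$ (one $+1$, one $-1$ per column, hence $\mathbbold{1}_m^\top B_\ell=\mathbbold{0}^\top$) together with the positive definiteness of $K_{\rm CR}(T)$ and $K_{\rm IO}$. The only difference is that you explicitly flag the case of a complex not participating in the given sub-network (where the corresponding diagonal entry of $B_\ell K_\ell B_\ell^\top$ is zero rather than strictly positive) and resolve it by restricting to active complexes, a point the paper's proof passes over silently.
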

\begin{proof}
	For $B_{\rm CR} K_{\rm CR}(T)B_{\rm CR}^{\top}$ where $T>0$, each diagonal element satisfies
	\begin{equation*}
	\left(B_{\rm CR} K_{\rm CR}(T)B_{\rm CR}^{\top}\right)_{i,i}
	= B_{i \cdot } K_{\rm CR}(T) B_{i \cdot}^{\top} >0,  \quad i=1,\dots, m,
	\end{equation*}
	where the inequality follows from the positive definiteness of $K_{\rm CR}(T)$, and each off-diagonal element $\left(B_{\rm CR} K_{\rm CR}(T)B_{\rm CR}^{\top}\right)_{i_1,i_2}$ ($i_1,i_2=1,\dots, m$ and $i_1\neq i_2$) satisfies
	\begin{align*}
	\left(B_{\rm CR} K_{\rm CR}(T)B_{\rm CR}^{\top}\right)_{i_1,i_2}
	= B_{i_1 \cdot } K_{\rm CR}(T) B_{i_2 \cdot}^{\top} =\sum_{j=1}^{r_{\rm CR}/2} B_{i_1 j }B_{i_2 j} \left(K_{\rm CR}(T)\right)_{j,j}
	\leq 0,
	\end{align*}
	where the inequality follows from $B_{i_1 j }B_{i_2 j}\leq 0$ for $i_1\neq i_2$ (c.f. \eqref{eq. definition of matrix B}).
	Therefore, the first two conditions in \cref{def. Laplacian matrix} hold.
	Moreover, by the definition of $B$, it follows the relation $\mathbbold{1}^{\top}_{m} B = \mathbbold{0}^{\top}_{m}$, and, therefore, $\mathbbold{1}^{\top}_{m}B_{\rm CR} K_{\rm CR}(T)B_{\rm CR}^{\top}=\mathbbold{0}^{\top}_{m}$ 
	and $B_{\rm CR} K_{\rm CR}(T)B_{\rm CR}^{\top}\mathbbold{1}_{m}=\mathbbold{0}_{m}$, which suggests the third condition in  \cref{def. Laplacian matrix} to hold.
	Given the analysis above, we can conclude that $B_{\rm CR} K_{\rm CR}(T)B_{\rm CR}^{\top}$ is a Laplacian matrix when $T>0$. 
	
	Similarly, we can prove the result for $B_{\rm IO} K_{\rm IO}B_{\rm IO}^{\top}$, which shows the proposition.
\end{proof}

\begin{proposition}{\label{pro. kernel space}}
	If $T>0$, then there hold $\ker \left(B_{\rm CR} K_{\rm CR}(T)B_{\rm CR}^{\top}\right)= \ker B_{\rm CR}^{\top} $ and $\ker \left(B_{\rm IO} K_{\rm IO}B_{\rm IO}^{\top}\right)= \ker B_{\rm IO}^{\top} $.
\end{proposition}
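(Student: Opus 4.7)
The plan is to prove the equality of kernels via two inclusions, relying only on the positive definiteness of the diagonal matrices $K_{\rm CR}(T)$ (for $T>0$) and $K_{\rm IO}$, both of which have already been noted in the paper: $K_{\rm CR}(T)$ is positive definite because each diagonal entry has the form of a strictly positive reaction rate times a strictly positive exponential, and $K_{\rm IO}$ is positive definite for the analogous reason. Given this, the result is a standard linear-algebra fact about quadratic forms, not something requiring the CRN structure.

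First, I would establish the easy direction. If $v \in \ker B_{\rm CR}^{\top}$, then $B_{\rm CR}^{\top} v = \mathbbold{0}$, so $B_{\rm CR} K_{\rm CR}(T) B_{\rm CR}^{\top} v = \mathbbold{0}$ as well, giving the inclusion $\ker B_{\rm CR}^{\top} \subseteq \ker\!\bigl(B_{\rm CR} K_{\rm CR}(T) B_{\rm CR}^{\top}\bigr)$. Next, for the reverse inclusion, I would apply the standard quadratic-form trick: if $v \in \ker\!\bigl(B_{\rm CR} K_{\rm CR}(T) B_{\rm CR}^{\top}\bigr)$, then left-multiplying by $v^{\top}$ yields
\begin{equation*}
0 = v^{\top} B_{\rm CR} K_{\rm CR}(T) B_{\rm CR}^{\top} v = w^{\top} K_{\rm CR}(T)\, w, \qquad w \triangleq B_{\rm CR}^{\top} v.
\end{equation*}
Since $K_{\rm CR}(T)$ is symmetric positive definite for $T>0$, this forces $w = \mathbbold{0}$, i.e., $v \in \ker B_{\rm CR}^{\top}$. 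The two inclusions together yield the first equality.

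For the second equality, the argument is verbatim the same, with $B_{\rm CR}$ replaced by $B_{\rm IO}$ and $K_{\rm CR}(T)$ replaced by the (temperature-independent) positive definite diagonal matrix $K_{\rm IO}$; no additional input is needed. I do not expect any real obstacle: the entire content of the proposition reduces to the positive definiteness already recorded for the two diagonal matrices. The proposition is presumably stated for later use, namely to identify when the quadratic forms appearing in $\dot S_{\mathcal A}(U,N)$ vanish in the proof of \cref{thm. stability}, where the kernels of $B_{\rm CR}^{\top} Y^{\top}$ and $B_{\rm IO}^{\top} Y^{\top}$ must be matched against $\ker \tilde{\Gamma}^{\top}$ via \cref{prop. stoichiometric compatibility class}.
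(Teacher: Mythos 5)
Your proof is correct and is essentially identical to the paper's: both establish the trivial inclusion $\ker B_{\rm CR}^{\top}\subseteq\ker\left(B_{\rm CR}K_{\rm CR}(T)B_{\rm CR}^{\top}\right)$ and obtain the reverse inclusion by contracting with $v^{\top}$ and invoking the positive definiteness of the diagonal matrix $K_{\rm CR}(T)$ (respectively $K_{\rm IO}$) to force $B_{\rm CR}^{\top}v=\mathbbold{0}$. No differences worth noting.
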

\begin{proof}
	If $\gamma\in \ker (B_{\rm CR} K_{\rm CR}(T)B_{\rm CR}^{\top})$, then $\gamma^{\top}\ker B_{\rm CR} K_{\rm CR}(T)B_{\rm CR}^{\top} \gamma =0$.
	By the positive definiteness of $K_{\rm CR}(T)$ when $T>0$, we have the relation $B_{\rm CR}^{\top} \gamma =\mathbbold{0}_{r_{\rm CR}}$, which suggests $\ker (B_{\rm CR} K_{\rm CR}(T)B_{\rm CR}^{\top}) \subset \ker (B_{\rm CR}^{\top})$.
	Since one can easily observe the relation $\ker (B_{\rm CR}^{\top}) \subset \ker (B_{\rm CR} K_{\rm CR}(T)B_{\rm CR}^{\top})$, we therefore can conclude $\ker (B_{\rm CR}^{\top}) = \ker (B_{\rm CR} K_{\rm CR}(T)B_{\rm CR}^{\top})$.
	
	Similarly, we can prove $\ker (B_{\rm IO} K_{\rm IO}B_{\rm IO}^{\top})= \ker (B_{\rm IO}^{\top} )$, which shows the result.
\end{proof}

\begin{lemma}[{\cite{van2015complex,rao2013graph}}]\label{lemma Laplacian matrix}
	If $L$ is a balanced Laplacian matrix, then $\gamma ^{\top}L\Exp(\gamma)\geq 0$ for any $\gamma\in\mathbb{R}^n$, where the equality holds if $L^{\top}\gamma=0$.
\end{lemma}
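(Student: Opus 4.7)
The plan is to prove the inequality by decomposing $L$ into a weighted sum of rank-one ``edge'' matrices and then using the monotonicity of the exponential to fix the sign of each term. Since the Laplacians actually appearing in this paper, namely $B_{\rm CR}K_{\rm CR}(T)B_{\rm CR}^{\top}$ and $B_{\rm IO}K_{\rm IO}B_{\rm IO}^{\top}$, are symmetric by construction, I would first work out the symmetric case, which is exactly what is needed in the proof of \cref{thm. stability}.

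First I would use symmetry together with the defining properties of a balanced Laplacian (zero row and column sums, non-positive off-diagonal entries) to write
\[
L \;=\; \sum_{1\le i<j\le n}(-L_{ij})\,(e_i-e_j)(e_i-e_j)^{\top},
\]
where each coefficient $-L_{ij}\ge 0$ and $e_i$ denotes the $i$-th standard basis vector. A direct comparison of entries verifies this identity: the off-diagonal entry $(k,l)$ with $k<l$ picks up only the term $(i,j)=(k,l)$ and reproduces $L_{kl}$, while the diagonal entry $(k,k)$ collects $-\sum_{j\neq k}L_{kj}$, which equals $L_{kk}$ by the row-sum condition.

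Next, substituting this decomposition into the bilinear form yields
\[
\gamma^{\top} L \,\Exp(\gamma)
\;=\; \sum_{1\le i<j\le n}(-L_{ij})\,(\gamma_i-\gamma_j)\bigl(e^{\gamma_i}-e^{\gamma_j}\bigr),
\]
and since $x\mapsto e^x$ is strictly increasing, $(\gamma_i-\gamma_j)$ and $(e^{\gamma_i}-e^{\gamma_j})$ always share the same sign. Every summand is therefore non-negative, giving the inequality. If $L^{\top}\gamma=L\gamma=0$, then $\gamma$ must be constant on each connected component of the weighted graph associated with $L$, which forces $\gamma_i=\gamma_j$ whenever $L_{ij}\neq 0$; every summand then vanishes, proving the stated sufficient condition for equality.

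The main obstacle would be extending the argument to a genuinely non-symmetric balanced Laplacian, because the clean rank-one expansion above relies on symmetry. In that generality, one would have to combine the representation obtained from the row-sum identity, $\gamma^{\top}L\Exp(\gamma)=\sum_{i\neq j}\gamma_iL_{ij}(e^{\gamma_j}-e^{\gamma_i})$, with the dual one from the column-sum identity, $\gamma^{\top}L\Exp(\gamma)=\sum_{i\neq j}L_{ij}(\gamma_i-\gamma_j)e^{\gamma_j}$, and show via a pairwise averaging of $(i,j)$ and $(j,i)$ contributions that the combined expression is still a sum of non-negative terms. Since the applications in this paper only require symmetric Laplacians, the symmetric argument above is what is actually needed.
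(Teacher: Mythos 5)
The paper does not actually prove this lemma: it is imported verbatim from \cite{van2015complex,rao2013graph}, so there is no internal proof to compare against. Your argument for the symmetric case is correct and complete. The edge decomposition $L=\sum_{i<j}(-L_{ij})(e_i-e_j)(e_i-e_j)^{\top}$ is verified correctly from the sign and row-sum conditions, each summand $(-L_{ij})(\gamma_i-\gamma_j)\bigl(e^{\gamma_i}-e^{\gamma_j}\bigr)$ is non-negative by monotonicity of the exponential, and symmetry does hold for both matrices the paper feeds into this lemma, since $\bigl(BKB^{\top}\bigr)^{\top}=BKB^{\top}$ for diagonal $K$. One simplification: the equality claim is only an implication (``equality holds \emph{if} $L^{\top}\gamma=0$''), and it is immediate in full generality from $\gamma^{\top}L\Exp(\gamma)=\bigl(L^{\top}\gamma\bigr)^{\top}\Exp(\gamma)=0$; your detour through connected components is correct but unnecessary.

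The one place the proposal falls short of the lemma \emph{as stated} is the non-symmetric case, and your sketch for it would not close. Pairwise averaging of the $(i,j)$ and $(j,i)$ contributions produces terms of the form $(\gamma_i-\gamma_j)\bigl(L_{ij}e^{\gamma_j}-L_{ji}e^{\gamma_i}\bigr)$, which have no definite sign when $L_{ij}\neq L_{ji}$ (already for a $3\times 3$ cyclic balanced Laplacian some paired terms are strictly negative), so the sum-of-non-negative-terms strategy genuinely fails there. The argument in the cited references is different: using the zero column sums, write $\gamma^{\top}L\Exp(\gamma)=\sum_{i\neq j}L_{ij}(\gamma_i-\gamma_j)e^{\gamma_j}$; the tangent-line (convexity) bound $(\gamma_i-\gamma_j)e^{\gamma_j}\leq e^{\gamma_i}-e^{\gamma_j}$ together with $L_{ij}\leq 0$ gives $\gamma^{\top}L\Exp(\gamma)\geq\sum_{i\neq j}L_{ij}\bigl(e^{\gamma_i}-e^{\gamma_j}\bigr)$, and the right-hand side vanishes upon applying the zero row sums to the first piece and the zero column sums to the second. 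That proof covers arbitrary balanced Laplacians and collapses to your computation when $L=L^{\top}$. Since \cref{thm. stability} only ever invokes the lemma for symmetric $L$, your proof suffices for the paper, but it proves a weaker statement than the lemma claims.
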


\begin{proposition}{\label{pro. a negative vector}}
	If \eqref{eq. at least linear growth of the Hamiltonian} and $r_{\rm IO}\neq 0$ hold, then each component of the vector $ \left(\frac{1}{T^*}-\frac{1}{T}\right)\Delta \mathcal{U}_{\rm IO} +\left(\frac{g(T)}{T}-\frac{g(T^*)}{T^*}\right)^{\top}YD_{\rm IO} $ is zero when $T= T^*$ and otherwise negative.
\end{proposition}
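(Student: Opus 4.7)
The plan is to analyze each component of the vector separately, corresponding to either an inflow reaction $(\mathbbold{0}_n \to \delta_i, u_i(T_e))$ or an outflow reaction $(\delta_i \to \mathbbold{0}_n, -u_i(T))$ in $\bar{\mathcal{R}}_{\rm IO}$. The starting observation is that because $r_{\rm IO} \neq 0$, equation \eqref{eq. temperature at detailed balanced equilirbium} forces $T^{*} = T_e$, which lets me replace $T^{*}$ by $T_e$ throughout. Furthermore, the corresponding column of $YD_{\rm IO}$ is $\delta_i$ for an inflow reaction and $-\delta_i$ for an outflow reaction, so each component of the vector only involves the single quantity $g_i$ and $u_i$ for one species $i$.

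The key analytic tool I will use is the Gibbs--Helmholtz-type identity
\begin{equation*}
\frac{d}{dT}\left(\frac{g_i(T)}{T}\right) = -\frac{u_i(T)}{T^{2}},
\end{equation*}
which follows immediately from $g_i'(T) = -s_i(T)$ and $s_i(T) = (u_i(T)-g_i(T))/T$ (see \eqref{eq small s help}). For an inflow reaction, write
\begin{equation*}
f_i(T) \triangleq \left(\tfrac{1}{T_e}-\tfrac{1}{T}\right)u_i(T_e) + \frac{g_i(T)}{T}-\frac{g_i(T_e)}{T_e},
\end{equation*}
so that $f_i(T_e)=0$ and by the identity above $f_i'(T) = (u_i(T_e)-u_i(T))/T^{2}$. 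For an outflow reaction, similarly set
\begin{equation*}
h_i(T) \triangleq -\left(\tfrac{1}{T_e}-\tfrac{1}{T}\right)u_i(T) - \frac{g_i(T)}{T}+\frac{g_i(T_e)}{T_e},
\end{equation*}
with $h_i(T_e)=0$ and a direct computation gives $h_i'(T) = (1/T-1/T_e)\,c_i(T)$, where the two $u_i(T)/T^{2}$ terms produced by the product rule cancel.

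With these derivatives in hand, the conclusion is immediate: by \cref{pro. positivity of heat capcities}, $c_i(T) > 0$ for every $T > 0$, so $u_i$ is strictly increasing; hence both $f_i'(T)$ and $h_i'(T)$ are strictly positive on $(0,T_e)$ and strictly negative on $(T_e,\infty)$. Therefore $T = T_e = T^{*}$ is a strict global maximum of $f_i$ and $h_i$, and both functions vanish at that point, so each component of the displayed vector is zero when $T = T^{*}$ and strictly negative otherwise. I do not expect any serious obstacle; the only subtlety is remembering to invoke $T^{*}=T_e$ so that $u_i(T_e)$ appearing in $\Delta\mathcal{U}_{\rm IO}$ matches the reference point used in the derivative calculation, and to use the strict (not merely non-strict) positivity of $c_i$ guaranteed by \eqref{eq. at least linear growth of the Hamiltonian} to upgrade the inequality to strict.
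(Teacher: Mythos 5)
Your proposal is correct and takes essentially the same route as the paper's own proof: both split the components by inflow versus outflow, use $T^{*}=T_e$ from \eqref{eq. temperature at detailed balanced equilirbium}, and compute the $T$-derivative of each component via \eqref{eq small s help} to get exactly the expressions $\bigl(u_i(T^{*})-u_i(T)\bigr)/T^{2}$ and $\bigl(1/T-1/T^{*}\bigr)c_i(T)$, concluding from $c_i>0$ that $T=T^{*}$ is a strict maximum where the component vanishes. Your explicit invocation of the Gibbs--Helmholtz identity is just a cleaner packaging of the same computation.
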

\begin{proof}
	By denoting the $j$-th element of the considered vector as $\xi_{j}(T)$ , we have
	\begin{equation*}
	\xi_{j}(T)={-u_{i}(T)}/{T^*}+{u_{i}(T)}/{T} -{g_i(T)}/{T}+{g_i(T^*)}/{T^*}, 
	\quad \text{for}~ y_{\sigma_{j+r_{\rm CR}}}=\delta_{i},
	\end{equation*}
	and, by \eqref{eq. temperature at detailed balanced equilirbium}, 
	\begin{equation*}
	\xi_{j}(T)={u_{i}(T^*)}/{T^*}-{u_{i}(T^*)}/{T} +{g_i(T)}/{T}-{g_i(T^*)}/{T^*},
	\quad \text{for}~ y_{\pi_{j+r_{\rm CR}}}=\delta_{i}.
	\end{equation*}
	According to above equations, we can observe that all $\xi_{j}(T)$ are zero when $T=T^*$ . Moreover, by \eqref{eq small s help}, we can easily calculate the derivative of $\xi_{j}(T)$ as
	\begin{equation*}
	\frac{\dd \xi_{j} (T)}{\dd t} =
	\left\{
	\begin{array}{lr}
	\frac{c_{i}(T)}{T}-\frac{c_{i}(T)}{T*}, & y_{\sigma_{j+r_{\rm CR}}}=\delta_{i}, \\
	\frac{u_i(T^*)-u_i(T)}{T^2}, & y_{\pi_{j+r_{\rm CR}}}=\delta_{i},
	\end{array}
	\right.
	\end{equation*}
	which suggests the derivative of $\xi_{j}$ to be zero at $T=T^*$, positive for $T<T^*$, and negative for $T>T^*$.
	Given the analysis above, we can conclude that each $\xi_{j}(T)$ is zero at $T=T^*$ and otherwise negative, which shows the result. 
\end{proof}

With these preparation, we can finally prove \cref{thm. stability}.

\begin{proof}[The proof of \cref{thm. stability}]
	According to the chain rule and the dynamics \eqref{eq. compact formula}, it follows that
	\begin{align}
	&\dot{S}_{\mathcal A}(U, N) \notag \\
	&=- \left( \frac{\mu}{ T} - \frac{\mu^*}{T^* } \right)^{\top}YB_{\rm CR} K_{\rm CR}(T)B_{\rm CR}^{\top} \Exp\left(Y \left( \frac{\mu}{A_v R T} - \frac{\mu^*}{A_v RT^*} \right) \right) \notag\\
	&\quad -A_v R \left( \Ln N-\Ln N^* \right)^\top YB_{\rm IO} K_{\rm IO}B_{\rm IO}^{\top} \Exp\left(Y\left( \Ln N-\Ln N^* \right)  \right) \notag\\
	&\quad +\left[ \left({1}/{T^*}-{1}/{T}\right)\Delta \mathcal{U}_{\rm IO} +\left({g(T)}/{T}-{g(T^*)}/{T^*}\right)^{\top}YD_{\rm IO} \right] v_{\rm IO} \notag \\
	&\quad  + ({1}/{T^*}-{1}/{T})\Delta \mathcal{U}_{\rm CR} v_{\rm CR}(U,N)-\tilde k_{r_{\rm CR}+r_{\rm IO}+1} h\left(T^*-T\right)^2 \chi(r_{\rm HE} \neq 0)/(T^*T)\notag \\
	&\leq \left[ \left({1}/{T^*}-{1}/{T}\right)\Delta \mathcal{U}_{\rm IO} +\left({g(T)}/{T}-{g(T^*)}/{T^*}\right)^{\top}YD_{\rm IO} \right] v_{\rm IO} \label{eq. SA dot} \\
	&\quad  + ({1}/{T^*}-{1}/{T})\Delta \mathcal{U}_{\rm CR} v_{\rm CR}(U,N)  -\tilde k_{r_{\rm CR}+r_{\rm IO}+1} h\left(T^*-T\right)^2 \chi(r_{\rm HE} \neq 0)/(T^*T),\notag 
	\end{align}
	where the inequality follows from \cref{pro. Laplacian matrix} and \cref{lemma Laplacian matrix}, and the equality holds if and only if $(\mu/T-\mu^*/T^*) \in \ker (B^{\top}_{\rm CR}Y^{\top})$ and $(\Ln N-\Ln N^*) \in \ker (B^{\top}_{\rm IO}Y^{\top})$ (by \cref{lemma Laplacian matrix} and \cref{pro. kernel space}).
	According to \cref{condition. 3}, there are several cases, we need to analyze separately:
	\begin{enumerate}
		\item For $\Delta U_{\rm CR}\neq \mathbbold{0}_{r_{\rm CR}}$ and $r_{\rm CR}=r_{\rm IO}=0$ (i.e. the second case in \cref{condition. 3}), the inequality \eqref{eq. SA dot} suggests that $\dot{S}_{\mathcal A}(U, N) \leq 0$ at any positive state with $T=T^*$ where the inequality holds when $(\mu/T-\mu^*/T^*) \in \ker (B^\top_{\rm CR}Y^{\top})$.  Since $\ker  (B_{\rm CR}^{\top}Y^{\top})=\ker (B^{\top}Y^{\top})=\ker  (\Gamma^{\top})$ (by \cref{prop. stoichiometric compatibility class}) hold in this case, we can further conclude $\dot{S}_{\mathcal A}(U, N) \leq 0$ at any positive state with $T=T^*$ where the inequality holds if and only if $\nabla{S}_{\mathcal A}(U, N)
		\in \ker \tilde{ \Gamma}^{\top}$.
		\item For $\Delta U_{\rm CR}= \mathbbold{0}_{r_{\rm CR}}$ and $r_{\rm CR}=r_{\rm IO}=0$ (i.e. a part of the first case in \cref{condition. 3}), the inequality \eqref{eq. SA dot} suggests that $\dot{S}_{\mathcal A}(U, N) \leq 0$ at any positive state where the equality holds if and only if $(\mu/T-\mu^*/T^*) \in \ker (B_{\rm CR}^{\top} Y^{\top})$.  Since $\ker  (B_{\rm CR}^{\top}Y^{\top})=\ker (B^{\top}Y^{\top})=\ker  (\Gamma^{\top})$  (by \cref{prop. stoichiometric compatibility class}) hold in this case, we can further conclude $\dot{S}_{\mathcal A}(U, N) \leq 0$ at any positive state where the inequality holds if and only if $\nabla{S}_{\mathcal A}(U, N)
		\in \ker (\tilde{ \Gamma}^{\top})$.
		\item For $\Delta U_{\rm CR}= \mathbbold{0}_{r_{\rm CR}}$ and at least one of $r_{\rm CR}$ and $r_{\rm IO}$ being non-zero (i.e. the other part of the first case in \cref{condition. 3}), the inequality \eqref{eq. SA dot} suggests by \cref{pro. a negative vector} that $\dot{S}_{\mathcal A}(U, N) \leq 0$ at any positive state where the equality hold if and only if $(\mu/T-\mu^*/T^*) \in \ker (B_{\rm CR}^{\top}Y^{\top})$, $(\Ln N-\Ln N^*) \in \ker (B_{\rm IO}^{\top} Y^{\top})$ and $T=T^*$. 
		Note that the condition $(\Ln N-\Ln N^*) \in \ker (YB_{\rm IO})$ and $T=T^*$ suggests $(\mu/T-\mu^*/T^*)$ ($= g(T)/T-g(T^*)/T^*+A_{v}R\left(\Ln N-\Ln N^*\right)$) to be in the space $\ker (B_{\rm IO}^{\top} Y^{\top})$. 
		Therefor, we can further conclude that $\dot{S}_{\mathcal A}(U, N) \leq 0$ where the the equality holds if and only if $\frac{\partial {S}_{\mathcal A}(U, N)}{ \partial N} \in \ker (B_{\rm CR}^{\top}Y^{\top})+\ker (B_{\rm IO}^{\top}Y^{\top})=\ker (B^{\top}Y^{\top})=\ker ({ \Gamma}^{\top})$ and $\frac{\partial {S}_{\mathcal A}(U, N)}{ \partial U}=0$ (i.e. $\nabla{S}_{\mathcal A}(U, N)
		\in \ker (\tilde{ \Gamma}^{\top})$).
	\end{enumerate}
	Given the analysis above, we show the function ${S}_{\mathcal A}(U, N)$ to be dissipative.
	
	Note that, besides dissipativeness, ${S}_{\mathcal A}(U, N)$ is also positive definite in the positive stoichiometric-like compatibility class $\mathcal{PS}(U^*,N^*)$. 
	Therefore, by Lyapunov's second method, the function ${S}_{\mathcal A}(U, N)$  is a Lyapunov function for the dynamics \eqref{eq. compact formula}, rendering the state $(U^*,N^*)$ to be stable. 
\end{proof}

\subsection{The proofs of \cref{thm. asymptotic stability}} {\label{section proof of asymptotic stability}}
In this subsetion, we provide the proofs for results related to \cref{thm. asymptotic stability}.

\textit{Proof of} \cref{pro. unboundedness of LA}. We first show the unboundedness of $\mathcal{L}_{A}(\beta,\gamma)$ at any finite boundary, i.e. \eqref{eq. unboundedness of LA at finite boundary}.
According to the definition, we can re-express $\mathcal{L}_{A}(\beta,\gamma)$ as 
\begin{align}
\mathcal{L}_{A}(\beta,\gamma) & =
S(U,N)+\left(\beta -\frac{1}{T^*}\right)U+\left(\gamma+\frac{\mu^*}{T^*}\right)^{\top}N-\beta U^o -\gamma^{\top} N^o + \mathfrak{C}on \notag \\
& = S(U,N)-\frac{U}{T}+\frac{G}{T}+A_v R\sum_{i=1}^n N_i -\beta U^o -\gamma^{\top} N^o + \mathfrak{C}on \notag \\
& = A_v R\left(\sum_{i=1}^n N_i\right) -\beta U^o -\gamma^{\top} N^o + \mathfrak{C}on, \label{eq. LA reexpress}
\end{align}
where $\mathfrak{C}on$ is some constant, and the last equality follows from \eqref{eq. small S help}. 
By \eqref{eq. express of temperature by beta gamma}, we can conclude that $\lim_{\beta\to \left(\frac{1}{T^*}\right)^{-}}T= +\infty$, and, therefore, 
\begin{equation*}
\lim_{(\beta,\gamma)\to ({\beta^{b}}^{-},\gamma^{b})}
N_{i} = \lim_{T\to \infty} Z_i(T)\exp\left(\frac{\gamma^{b}+\frac{\mu^*}{T^*}}{A_v R} \right)
= +\infty, \quad \forall\left(\beta^{b},\gamma^{b}\right) \in \left\{\frac{1}{T^*}\right\} \otimes \mathbb{R}^{n},
\end{equation*}
where the first equality follows from \eqref{eq. express of mass amount by beta gamma}, and the second from \cref{condition. 1}.
By plugging it into \eqref{eq. LA reexpress}, we arrive at \eqref{eq. unboundedness of LA at finite boundary}, i.e. the unboundedness of $\mathcal{L}_{A}(\beta,\gamma)$ at any finite boundary. 

Then, we show part of \eqref{eq. unboundedness of LA at infinite boundary} where $\bar \beta=0$ and $\bar \gamma \neq \mathbbold{0}_{n}$.
In this case, we obtain 
\begin{align*}
\lim_{\theta \to +\infty} \mathcal{L}_{\mathcal{A}}(\theta \bar\beta,\theta \bar \gamma) 
&= \mathcal{L}_{\mathcal{A}}(0,\mathbbold{0}_{n})
+	\lim_{\theta \to +\infty} \int_{0}^{\theta}  \bar{\gamma}^{\top}(N-N^o) \dd \eta  \\
& = \mathcal{L}_{\mathcal{A}}(0,\mathbbold{0}_{n})
+	\lim_{\theta \to +\infty} \int_{0}^{\theta}  \bar{\gamma}^{\top}\left(\Exp\left\{\frac{\eta \bar \gamma  +\frac{\mu^*}{T^*}-\frac{g_i(T*)}{T*}}{A_v R} \right\}-N^o\right)\dd \eta  \\
&= \mathcal{L}_{\mathcal{A}}(0,\mathbbold{0}_{n}) + \lim_{\theta \to +\infty} \Bigg(
A_v R\bar{\gamma}^{\top}\Exp\left\{\frac{\theta \bar \gamma  +\frac{\mu^*}{T^*}-\frac{g_i(T*)}{T*}}{A_v R} \right\}
-\theta \bar \gamma ^{\top} N^o  \\
&\qquad \qquad \qquad \qquad\qquad\qquad \qquad \quad
-
A_v R\bar{\gamma}^{\top}\Exp\left\{\frac{\frac{\mu^*}{T^*}-\frac{g_i(T)}{T}}{A_v R} \right\}
\Bigg)\\
&=+\infty,
\end{align*}
where the second equality follows from \eqref{eq. express of temperature by beta gamma} and \eqref{eq. express of mass amount by beta gamma}, and the last equality follows from the unboundedness of function $f(x)=b_1e^{ax}-abx$, ($b_1,b_2>0$, $a\neq 0$), at infinities.

Finally, we show the rest part of  \eqref{eq. unboundedness of LA at infinite boundary} where $\bar \beta<0$.
In this case, we can calculate
\begin{align*}
\frac{\mathcal{L}_{\mathcal{A}}(\theta \bar\beta,\theta \bar \gamma) }{\dd \theta}
&= \bar \beta  (U-U^o) + \bar{\gamma}^{\top}(N-N^o) \\
& = -\bar \beta \left[U^o-(N^o)^{\top}u(0)\right] +
N_{i} 
\left(u_i(T)\bar \beta_i +\bar\gamma_i\right)
-N^o_i \left(u_{i}(0) \bar \beta_i +\bar\gamma_i \right). 
\end{align*}
We further denote ${\rm term}_i (\theta \bar\beta,\theta \bar \gamma)\triangleq	N_{i} 
\left(u_i(T)\bar \beta_i +\bar\gamma_i\right)
-N^o_i \left(u_{i}(0) \bar \beta_i +\bar\gamma_i \right) $ and,
by  \eqref{eq. express of temperature by beta gamma} and \eqref{eq. express of mass amount by beta gamma}, arrive at 
\begin{align*}
&\lim_{\theta \to \infty} {\rm term}_i (\theta \bar\beta,\theta \bar \gamma) \\
&= \lim_{\theta \to \infty} \exp\left(\frac{\theta \bar \gamma_i+\frac{\mu^*}{T^*}-\frac{g_i(T)}{T}}{A_v R} \right) 	\left(u_i(T)\bar \beta_i +\bar\gamma_i\right)
-N^o_i \left(u_{i}(0) \bar \beta_i +\bar\gamma_i \right) \\
&= \lim_{\theta \to \infty} \exp\left(\frac{\theta   \left( \bar \gamma_i+u_i(0) \bar \beta \right)+\frac{\mu^*}{T^*}}{A_v R} \right) 	\left(u_i(0)\bar \beta_i +\bar\gamma_i\right)
-N^o_i \left(u_{i}(0) \bar \beta_i +\bar\gamma_i \right),
\end{align*}
where the second equality follows from $\lim_{\theta \to \infty} T  = 0$, $\lim_{\theta \to \infty} 1/T = \lim_{\theta \to \infty} \bar \beta \theta $, and  \eqref{eq. low temperature of free energy}.
To calculate the limit of ${\rm term}_i (\theta \bar\beta,\theta \bar \gamma)$, we need to consider three cases:
\begin{enumerate}
	\item ``$u_{i}(0) \bar \beta_i +\bar\gamma_i >0$": In this case, the exponential term in the above dominants, and therefore, $\lim_{\theta \to \infty} {\rm term}_i (\theta \bar\beta,\theta \bar \gamma)=\infty$.
	\item ``$u_{i}(0) \bar \beta_i +\bar\gamma_i =0$": In this case, ${\rm term}_i (\theta \bar\beta,\theta \bar \gamma)\equiv 0$.
	\item ``$u_{i}(0) \bar \beta_i +\bar\gamma_i <0$": In this case, the constant term in the above dominants, and therefore, $\lim_{\theta \to \infty} {\rm term}_i (\theta \bar\beta,\theta \bar \gamma)>0$.
\end{enumerate}
To conclude, there exists a positive value $\bar \theta$ such that for any $\theta > \bar \theta $ each ${\rm term}_i (\theta \bar\beta,\theta \bar \gamma) > 0$, and, therefore
\begin{equation*}
\frac{\mathcal{L}_{\mathcal{A}}(\theta \bar\beta,\theta \bar \gamma) }{\dd \theta} > -\bar \beta \left[U^o-(N^o)^{\top}u(0)\right], \quad  \forall \theta > \bar \theta.
\end{equation*}
This fact suggests $\mathcal{L}_{\mathcal{A}}(\theta \bar\beta,\theta \bar \gamma)$ to have at least linear growth with respect to $\theta$ and go to infinity as $\theta \to +\infty$.

Given the analysis above, we prove the proposition. $\Box$

\textit{Proof of} \cref{pro. convexity and closeness of SC}.
The non-emptiness of $\mathfrak{S}_{c}$ follows from $(0,\mathbbold{0}_{n}) \in \mathfrak{S}_{c}$; the convexity follows from the convexity of the function $\mathcal{L}_{A}(\beta,\gamma)$.
Therefore, we only need to show the closeness of this set. 
Let $\{(\beta^{\ell}, \gamma ^{\ell})\}_{\ell}$ be a convergent sequence in $\mathfrak{S}_{c}$ with the limit $(\beta^{\infty}, \gamma ^{\infty})$ (in the closure of $\mathfrak{D} \cap  \ker (\tilde{\Gamma}^{\top})$).
By \eqref{eq. unboundedness of LA at finite boundary}, we know that $(\beta^{\infty}, \gamma ^{\infty})$ cannot be on the boundary of $\mathfrak{D}$ and, therefore, must in the interior of $\mathfrak{D} \cap  \ker (\tilde{\Gamma}^{\top})$.
Moreover, by the continuity, we have 
$
\mathcal{L}_{A}(\beta^{\infty},\gamma^{\infty})
=\lim_{\ell \to \infty} \mathcal{L}_{A}(\beta^{\ell},\gamma^{\ell})
\leq \mathcal{L}_{A}(0,\mathbbold{0}_{n}),
$
which suggests $(\beta^{\infty},\gamma^{\infty})$ to be in the set  $\mathfrak{S}_{c}$.
Therefore, we show the result. $\Box$

\textit{Proof of} \cref{pro. compactness of SC}.
We first prove by contradiction that $\mathfrak{S}_{c}$ contains no half line with endpoint $\mathbbold{0}_{n+1}$.	
Assume $\mathfrak{S}_{c}$ contains a half line $\{(\theta\bar\beta,\theta \bar \gamma)~|~\theta >0\}$ where $(\bar\beta,\bar \gamma)^{\top} \neq \mathbbold{0}_{n+1}$.
By \eqref{eq. unboundedness of LA at infinite boundary} and the definition of  $\mathfrak{S}_{c}$, the parameter $\bar\beta$ should be greater than zero; by the fact $ \mathfrak{D}=(-\infty,\frac{1}{T^*})\otimes \mathbb{R}^{n}$ is only half of $\mathbb{R}^{n+1}$, the parameter $\bar\beta$ cannot be greater than zero.
As a result, $\mathfrak{S}_{c}$ contains no half line with endpoint $\mathbbold{0}_{n+1}$.

Note that an unbounded closed and convex set containing $\mathbbold{0}_{n+1}$ must have a half line with endpoint $\mathbbold{0}_{n+1}$ \cite[P. 105]{stoer1970convexity}.
Therefore, the closed convex set $\mathfrak{S}_{c}$ (c.f. \cref{pro. convexity and closeness of SC}) is bounded, and, moreover, compact. $\Box$

\textit{Proof of} \cref{pro. minimum point of LA}.
The compactness of $\mathfrak{S}_{c}$ (c.f. \cref{pro. compactness of SC}) suggests the continuous function $\mathcal{L}_{A}(\beta,\gamma)$ to have a locally minimum point in the region $\mathfrak{S}_{c}$. 
Moreover, by the definition of $\mathfrak{S}_{c}$, this locally minimum value is less than any value $\mathcal{L}_{A}(\beta,\gamma)$ takes outside $\mathfrak{S}_{c}$.
Therefore, this minimum point is also a minimum in $\mathfrak{D} \cap  \ker (\tilde{\Gamma}^{\top})$.

The uniqueness of the minimum point follows immediately from the strict convexity of $\mathcal{L}_{A}(\beta,\gamma)$, and, therefore, we prove the result. $\Box$

\textit{Proof of} \cref{thm. asymptotic stability}.
{\em Existence:}
Let $(\beta^{**},\gamma^{**})$ be the minimum point of $\mathcal{L}_{A}(\beta,\gamma)$ in the region $\mathfrak{D} \cap  \ker (\tilde{\Gamma}^{\top})$, where the existence of this point is guaranteed by \cref{pro. minimum point of LA}.
By denoting 
$U^{**}\triangleq \frac{\partial \mathcal L (\beta^{**},\gamma^{**})}{\partial \beta}=\frac{\partial \mathcal L_{\mathcal A} (\beta^{**},\gamma^{**})}{\partial \beta} + U^o$  and $N^{**}\triangleq \left(\frac{\partial \mathcal L (\beta^{**},\gamma^{**})}{\partial \gamma}\right)^{\top}=\left(\frac{\partial \mathcal L_{\mathcal A} (\beta^{**},\gamma^{**})}{\partial \beta}\right)^{\top} + N^o$,
we can conclude that $(U^{**},N^{**})$ is a positive state (by the definition of the Legendre transformation) and satisfies
(by the minimum of $(\beta^{**},\gamma^{**})$)
\begin{equation}{\label{eq. proof of asymptotic stability 1}}
\left(
\begin{array}{c}
U^{**}-U^o \\ N^{**}-N^{o}
\end{array}
\right)
=
\nabla \mathcal{L}_{\mathcal{A}}(\beta^{**},\gamma^{**})
\in {\rm Im} \tilde{ \Gamma},
\end{equation}
which suggests $(U^{**},N^{**}) \in \mathcal{PS}(U^o,N^o)$. 
Note that if $\Delta \mathcal{U}_{\rm CR}\neq \mathbbold{0}^{r_{\rm CR}}$, then the condition $U^o=(N^o)^{\top}u(T_e)$, \cref{condition. 3}, and \eqref{eq. proof of asymptotic stability 1} implies $U^{**}=(N^{**})^{\top}u(T_e)$.
According to the Legendre transformation $\mathcal{L}(\beta,\gamma)$, we can further conclude that
\begin{equation*}
\nabla S_{\mathcal A}(U^{**}, N^{**}) =
\left(
\begin{array}{c}
\beta^{**} \\ \gamma^{**}
\end{array}
\right)
\in \ker (\tilde{\Gamma}^{\top}),
\end{equation*}
and, therefore, $(U^{**},N^{**})$ is a detailed balanced equilibrium in $\mathcal{PS}(U^o,N^o)$ (by \cref{cor. detailed balancing of each equilibrium}).

{\em Uniqueness:} Note that \cref{cor. detailed balancing of each equilibrium} suggests any positive detailed balanced equilibrium to be a stationary point of $S_{\mathcal{A}} (U,N)$ in its positive stoichiometric compatibility class.
Therefore, the uniqueness of the detailed balanced equilibrium follows immediately from the strict convexity of $S_{\mathcal{A}} (U,N)$.

{\em Asymptotic stability:} As mentioned above, any positive detailed balanced equilibrium is a stationary point of $S_{\mathcal{A}} (U,N)$ in its positive stoichiometric compatibility class.
Therefore, the strict convexity of $S_{\mathcal{A}} (U,N)$ suggests the function $S_{\mathcal{A}} (U,N)$ to be lower bounded in $\mathcal{PS}(U^o,N^o)$ with the minimum point at the unique detailed balanced equilibrium. 
Note that by \cref{thm. stability} and \cref{cor. detailed balancing of each equilibrium}, the function $S_{\mathcal{A}} (U,N)$ is strictly dissipative at any positive state other than detailed balanced equilibrium.
Thus, by the second Lyapunov method, the unique detailed balanced equilibrium is locally asymptotically stable. $\Box$

\end{document}